\newcounter{marginnote}
\numberwithin{equation}{subsection}
\numberwithin{figure}{section}
\declaretheorem[name=Theorem,
	refname={theorem,theorems},
	Refname={Theorem,Theorems},
	numberwithin=section]{theorem}
\declaretheorem[name=Proposition,
	refname={proposition, propositions},
	Refname={Proposition, Propositions},
	sibling=theorem]{proposition}
\declaretheorem[name=Lemma,
	refname={lemma,lemmas},
	Refname={Lemma,Lemmas},
	sibling=theorem]{lemma}
\declaretheorem[name=Corollary,
	refname={corollary,corollaries},
	Refname={Corollary,Corollaries},
	sibling=theorem]{corollary}
\declaretheorem[name=Remark,
	refname={remark,remarks},
	Refname={Remark,Remarks},
	style=remark,
	sibling=theorem]{remark}
\declaretheorem[name=Example,
	refname={example,examples},
	Refname={Example,Examples},
	sibling=theorem,
	style=remark]{example}
\declaretheorem[name=Definition,
	refname={definition,definitions},
	Refname={Definition,Definitions},
	style=remark,
	sibling=theorem]{definition}
\newcommand{\CC}{\mathbb{C}}
\newcommand{\ZZ}{\mathbb{Z}}
\newcommand{\NN}{\mathbb{N}}
\newcommand{\RR}{\mathbb{R}}
\newcommand{\QQ}{\mathbb{Q}}
\newcommand{\OO}{\mathcal{O}}
\renewcommand{\vec}[1]{\mathbf{#1}}
\newcommand{\Bigwedge}{\textstyle\bigwedge\!}
\DeclareMathOperator{\sRes}{sRes}
\DeclareMathOperator{\Spec}{Spec}
\DeclareMathOperator{\Supp}{Supp}
\DeclareMathOperator{\Hom}{Hom}
\DeclareMathOperator{\Mod}{Mod}
\newcommand{\Lder}{\mathrm{L}}
\newcommand{\Rder}{\mathrm{R}}
\DeclareMathOperator{\RGamma}{R\Gamma}
\newcommand{\HH}{H}
\newcommand{\RuHom}{\mathrm{R}\!\uHom}
\DeclareMathOperator{\DD}{D}
\DeclareMathOperator{\Tot}{Tot}
\DeclareMathOperator{\cone}{cone}
\newcommand{\qi}{\stackrel{\mathrm{qi}}{=}}
\DeclareMathOperator{\uHom}{\underline{Hom}}
\newcommand{\Rdual}{\mathbf{D}}
\DeclareMathOperator{\qdeg}{qdeg}
\DeclareMathOperator{\tdeg}{tdeg}
\DeclareMathOperator{\orbit}{O}
\newcommand{\Ddual}{\mathbb{D}}
\DeclareMathOperator{\FL}{FL}
\DeclareMathOperator{\fSupp}{fSupp}
\DeclareMathOperator{\cofSupp}{cofSupp}
\DeclareMathOperator{\relint}{relint}
\DeclareMathOperator{\shHom}{\mathscr{H}\text{\kern -3pt {\calligra\large om}}\,}
\DeclareMathOperator{\shRHom}{R\mathscr{H}\text{\kern -3pt {\calligra\large om}}\,}
\newcommand{\an}{\mathrm{an}}
\begin{document}
\title{$A$-Hypergeometric Modules and Gauss--Manin Systems}

\author{Avi Steiner}
\address{A.~Steiner\\
  Purdue University\\
  Dept.\ of Mathematics\\
  150 N.\ University St.\\
  West Lafayette, IN 47907\\ USA}
\email{steinea@purdue.edu}
\thanks{Supported by the National Science Foundation under grant DMS-1401392.}
\date{\today}

\begin{abstract}
	Let $A$ be a $d\times n$ integer matrix. Gel$'$fand et al.\ proved that most $A$-hypergeometric systems have an interpretation as a Fourier--Laplace transform of a direct image. The set of parameters for which this happens was later identified by Schulze and Walther as the set of not strongly resonant parameters of $A$. A similar statement relating $A$-hypergeometric systems to exceptional direct images was proved by Reichelt. In this article, we consider a hybrid approach involving neighborhoods $U$ of the torus of $A$ and consider compositions of direct and exceptional direct images. Our main results characterize for which parameters the associated $A$-hypergeometric system is the inverse Fourier--Laplace transform of such a ``mixed Gauss--Manin'' system.
	
	In order to describe which $U$ work for such a parameter, we introduce the notions of fiber support and cofiber support of a $D$-module.
	
	If the semigroup ring $\CC[\NN{A}]$ is normal, we show that every $A$-hypergeometric system is ``mixed Gauss--Manin''. We also give an explicit description of the neighborhoods $U$ which work for each parameter in terms of primitive integral support functions.	
\end{abstract}

\maketitle



\section{Introduction}

Let $A\in \ZZ^{d\times n}$ be an integer matrix with columns $\vec{a}_1,\ldots,\vec{a}_n$ such that $\ZZ A=\ZZ^d$. Assume that $\NN A$ is pointed, i.e.~that $\NN{A}\cap -\NN{A}=0$. Define the following objects:
\begin{align*}
	S_A		&= \CC[\NN{A}]\text{, the semigroup ring of }A\\
	X_A 	&= \Spec{S_A}\text{, the toric variety of }A\\
	T_A		&= \Spec{\CC[\ZZ^d]}\text{, the torus of }A\\
	D_A		&= \CC[x_1,\ldots,x_n,\partial_1,\ldots,\partial_n]\text{, the }n\text{th Weyl algebra}
\end{align*} 
Associated to this data, Gel$'$fand, Graev, Kapranov, and Zelevinski\u{\i} defined in \cite{GGZ87,GZK89} 
a family of $D_A$-modules today referred to either as \emph{GKZ-} or \emph{$A$-hypergeometric} systems. These systems are defined as follows: Let $\beta\in \CC^d$. The \emph{Euler operators} of $A$ are the operators
\begin{equation}\label{eq:euler-ops}
	E_i \coloneqq a_{i1}x_1\partial_1+\cdots+a_{in}x_n\partial_n,\quad i=1,\ldots,d.
\end{equation}
The $A$-hypergeometric system corresponding to $\beta$ is then defined to be
\[ M_A(\beta)\coloneqq \frac{D_A}{\Braket{\partial^{\vec{u}_+}-\partial^{\vec{u}_-}|A\vec{u}=0, \vec{u}\in \ZZ^n} + \Braket{E_1-\beta_1,\ldots,E_d-\beta_d}},\]
where the brackets (here and throughout this paper) denote a left ideal.

\subsection{Torus Embeddings and Direct Images}
The torus embedding 
\begin{equation}\label{eq:torus-emb}
	\begin{gathered}
		\varphi\colon T_A\hookrightarrow \widehat{\CC^n}\coloneqq\Spec{\CC[\partial_1,\ldots,\partial_n]}\\
		t\mapsto (t^{\vec{a}_1},\ldots,t^{\vec{a}_n})
	\end{gathered}
\end{equation}
induces a closed immersion of $X_A$ into $\widehat{\CC^n}$. On the torus, the data $A$ and $\beta$ give a $D$-module 
\begin{equation*}
	\mathcal{O}_{T_A}^\beta \coloneqq \mathcal{O}_{T_A}t^{-\beta}.
\end{equation*}
A natural question is then whether and how this $D_{T_A}$-module is related to (the inverse Fourier--Laplace transform (see \S\ref{subsec:FL}) of) the $A$-hypergeometric system $M_A(\beta)$. A foundational result in this direction was given in \cite[Theorem~4.6]{GKZ90}: For non-resonant $\beta$, the Fourier--Laplace transform of the $D$-module direct image $\varphi_+\mathcal{O}_{T_A}^\beta$ is isomorphic to $M_A(\beta)$. This result was strengthened in \cite[Corollary~3.7]{SW09} to: the Fourier--Laplace transform of $\varphi_+\mathcal{O}_{T_A}^\beta$ is isomorphic to $M_A(\beta)$ if and only if $\beta$ is not in the set
\begin{equation}\label{eq:sres}
	\sRes(A)\coloneqq \bigcup_{j=1}^n \qdeg{H^1_{\braket{t^{\vec{a}_j}}}(S_A)}
\end{equation}
of \emph{strongly resonant} parameters. Here, $\qdeg$ denotes the set of quasidegrees of a $\ZZ^d$-graded module and is defined in \Cref{def:qdeg}. The $\ZZ^d$-grading on $S_A$ is defined in \S\ref{subsec:toric}.

It was then shown in \cite[Proposition~1.14]{Rei14} that for certain other $\beta$, the inverse Fourier--Laplace transform of $M_A(\beta)$ may be related to the $D$-module exceptional direct image $\varphi_\dagger\mathcal{O}_{T_A}^\beta$. Namely, $\varphi_\dagger\mathcal{O}_{T_A}^\beta \cong \FL^{-1}(M_A(\beta))$ if $A$ is homogeneous (i.e.~the vector $(1,\ldots,1)$ is in the rowspan of $A$), $\beta\in\QQ^d$, and $\beta$ is not in the set
\[ \bigcup_{F\text{ face of }A} \left[(\ZZ^d\cap \RR_{\geq0}A) + \CC{F}\right].\]

In \Cref{th:plus-dag,th:dag-plus}, we give simultaneous generalizations of both \cite[Corollary~3.3]{SW09} and \cite[Proposition~1.14]{Rei14}. These generalizations  allow (the inverse Fourier--Laplace transform of) more $A$-hypergeometric systems to be equipped with a mixed Hodge module structure. In a future paper, we will use the normal case of these generalizations (\Cref{th:normal}) to compute for normal $A$ the projection and restriction of $M_A(\beta)$ to coordinate subspaces of the form $\CC^F$, where $F$ is a face of $A$; and, if $A$ is in addition homogeneous, to show that the holonomic dual of $M_A(\beta)$ is itself $A$-hypergeometric.

\subsection{Main Idea}
Given a Zariski open subset $U\subseteq \widehat{\CC^n}$ containing $T_A$, write 
\[\iota_U\colon T_A\hookrightarrow U\]
for the embedding of $T_A$ into $U$ and 
\[\varpi_U\colon U\hookrightarrow \widehat{\CC^n}\] 
for the inclusion of $U$ into $\widehat{\CC^n}$. The first main result in this paper, \Cref{th:plus-dag}, provides an equivalent condition (in terms of the various local cohomology complexes $\RGamma_{\orbit(F)}(S_A)$ with supports in the orbit $\orbit(F)$; see \S\ref{subsec:toric} and \S\ref{subsec:LC}) for 
\[ K^A_\bullet(S_A; E_A-\beta) \cong \FL(\varpi_{U+}\iota_{U\dagger}\mathcal{O}_{T_A}^\beta)\]
for some such $U$,
while the second main result, \Cref{th:dag-plus}, does the same (this time in terms of the various localizations $S_A[\partial^{-F}]$) for
\[ K^A_\bullet(S_A; E_A-\beta) \cong \FL(\varpi_{U\dagger}\iota_{U+}\mathcal{O}_{T_A}^\beta).\]

The condition for the first main result has two parts: First is a requirement that $\beta$ not be rank-jumping. Second is a requirement about certain sets akin to Saito's $E_F(\beta)$ sets (see \Cref{def:E+dag,def:mgm}). Those parameters $\beta$ for which both these conditions hold are called \emph{dual mixed Gauss--Manin} (see \Cref{def:mgm}).

On the other hand, the condition for the second main result can be expressed as a requirement about Saito's $E_F(\beta)$ sets themselves. Those parameters $\beta$ for which this condition holds are called \emph{mixed Gauss--Manin} (see \Cref{def:mgm}).

The proof of \Cref{th:plus-dag} is accomplished as follows: First, we restate in terms of local cohomology via \Cref{lem:dag-plus}. Then, using the relationship between fiber support (\Cref{def:fiber-support}) and local cohomology in \Cref{prop:lc-vs-fSupp}, we focus in on the restriction to torus orbits. These restrictions are computed for general inverse-Fourier--Laplace-transformed Euler--Koszul complexes in \Cref{th:orbit-restr}.

We also use in the proof that $\varphi_\dagger\mathcal{O}_{T_A}^\beta$ can be expressed in two ways as an Euler--Koszul complex (see \Cref{def:Euler--Koszul}): As an Euler--Koszul complex of the dualizing complex of $S_A$ (\Cref{cor:phi-dagger}), and as an Euler--Koszul complex of $S_A$ itself (\Cref{prop:phi-dagger}).

The proof of \Cref{th:dag-plus} follows a similar route.

\subsection*{Acknowledgements}
\thanks{Support by the National Science Foundation under grant DMS-1401392 
is gratefully acknowledged. We would also like to thank Uli Walther for his support and guidance, Thomas Reichelt for asking the question which led to this paper, and Claude Sabbah and Kiyoshi Takeuchi for intriguing discussions.}

\section{Notation and Conventions}
Subsection \ref{subsec:toric} defines various symbols related to the affine semigroup $\NN{A}$. Subsection \ref{subsec:*things} recalls some common notions and facts about (multi-)graded rings and modules. Local cohomology with supports in a locally closed subset is recalled in subsection \ref{subsec:LC}. Conventions and notation relating to varieties, $D$-modules, sheaves, and derived categories are given in subsection \ref{subsec:other-convs} along with the definition of the Fourier-Laplace transform. Finally, in subsection \ref{subsec:Euler--Koszul}, we recall the notion of Euler--Koszul complexes.

\subsection{Toric and GKZ Conventions/Notation}\label{subsec:toric}
Let $R_A$ be the polynomial ring $\CC[\partial_1,\ldots, \partial_n]$, and set
\begin{equation}\label{eq:hatCCn}
	\widehat{\CC^n} \coloneqq \Spec R_A.
\end{equation}
This space is to be (loosely) interpreted as the ``Fourier--Laplace-transformed version'' of $\CC^n$, hence the $\widehat{\hphantom{X}}$ (cf.~\S\ref{subsec:FL}).

Let $I_A\subseteq R_A$ be the toric ideal corresponding to the embedding $\varphi$ from \eqref{eq:torus-emb}---we identify $S_A$ with the quotient $R_A/I_A$. The torus embedding also induces an action of $T_A$ on $\widehat{\CC^n}$, which in turn induces an action (the \emph{contragredient} action) of $T_A$ on $R_A$ via 
\[(t\cdot f)(\partial_1,\ldots,\partial_n)=f(t^{-\vec{a}_1}\partial_1,\ldots, t^{-\vec{a}_n}\partial_n).\]
An element $f\in R_A$ is \emph{homogeneous of degree $\alpha\in \ZZ^d$} if $t\cdot f = t^{-\alpha} f$ for all points $t\in (\CC^*)^n$; it is \emph{homogeneous} if it is homogeneous for some $\alpha$. In particular, $\deg(\partial_i)=\vec{a}_i$, and $S_A$ is a $\ZZ^d$-graded $R_A$-module.

Set 
\begin{equation}\label{eq:epsA}
	\varepsilon_A \coloneqq \vec{a}_1+\cdots + \vec{a}_n.
\end{equation}

Write $\hat{M}_A(\beta)$ for the inverse Fourier--Laplace transform (see \S\ref{subsec:FL})  of the GKZ system $M_A(\beta)$.

\subsubsection{Faces}
A submatrix $F$ of $A$ is called a \emph{face} of $A$, written $F\preceq A$, if $F$ has $d$ rows and $\RR_{\geq 0} F$ is a face of $\RR_{\geq 0} A$. Given ${F\preceq A}$, we make the following definitions:
\begin{equation}
	T_F\coloneqq \Spec\CC[\ZZ{F}]
\end{equation}
is the torus of $F$. The monomial in $\CC[\ZZ{F}]$ corresponding to $\alpha\in \ZZ{F}$ is written $t^\alpha$. Denote by
\begin{equation}\label{eq:orbit}
	\orbit(F)\coloneqq T_A\cdot \mathbbm{1}_F\subseteq \widehat{\CC^n}
\end{equation}
the orbit in $\widehat{\CC^n}$ corresponding to $F$ (where the $i$th coordinate of $\mathbbm{1}_F$ is $1$ if ${\vec{a}_i\in F}$ and $0$ otherwise). Note that the inclusion $\ZZ{F}\hookrightarrow \ZZ^d$ induces an isomorphism $\orbit(F)\cong T_F$. The rank of $F$ is denoted by $d_F$, and if $G\preceq A$ with $G\succeq F$, we set 
\begin{equation}
	d_{G/F}\coloneqq d_G-d_F.
\end{equation}
Define the ideal
\begin{equation}\label{eq:IAF}
	I^A_F\coloneqq I_A + \Braket{\partial_i|\vec{a}_i\notin F}
\end{equation}
of $R_A$, and set
\begin{equation}\label{eq:del-to-minusF}
	\partial^{kF}\coloneqq \prod_{\vec{a}_i\in F} \partial_i^k \qquad (k\in \ZZ).
\end{equation}

Given $u\in (\CC{F})^*\coloneqq \Hom_\CC(\CC{F},\CC)$, define $\vartheta_u$ to be the invariant vector field on $T_F$ defined by
\begin{equation}\label{eq:vartheta}
	\vartheta_u(t^\alpha) \coloneqq \braket{\alpha,u} t^\alpha \quad (\alpha\in\ZZ{F}),
\end{equation}
where $\braket{,}$ denotes the standard pairing of dual spaces. These vector fields span the Lie algebra of $T_F$; therefore, $D_{T_F}$ is generated as a $\CC$-algebra by $\mathcal{O}_{T_F}$ and the vector fields $\Set{\vartheta_u|u\in (\CC{F})^*}$ (both of these claims may be proven in a straightforward manner, e.g.~by choosing coordinates). 

For $\lambda\in \CC{F}$, define the $D_{T_F}$-module
\begin{equation}\label{eq:Gauss-Manin}
	\mathcal{O}_{T_F}^\lambda \coloneqq \mathcal{O}_{T_F}t^{-\lambda},
\end{equation}
where $t^{-\lambda}$ is a formal symbol subject to the $D_{T_F}$-action
\[\vartheta_u(ft^{-\lambda}) \coloneqq [\vartheta_u(f) - \braket{\lambda, u} f]t^{-\lambda} \qquad (u\in (\CC{F})^*).\]
This module is isomorphic to $\mathcal{O}_{T_F}$ as an $\mathcal{O}_{T_F}$-module and so is in particular an integrable connection. Moreover, it is a simple $D_{T_F}$-module.

\subsection{Graded Rings and Modules}\label{subsec:*things}
For more details about (multi-)graded rings and modules than are given here, refer to \cite{GW78,BH93,ms05}.

\subsubsection{Twists} Let $M$ be a graded module over a $\ZZ^k$-graded ring $R$. Given an $\alpha\in \ZZ^k$, define the graded module $M(\alpha)$ to be $M$ as an ungraded $R$-module and to have degree $\gamma$ component
\[ M(\alpha)_\gamma \coloneqq M_{\alpha+\gamma}.\]

\subsubsection{*- Properties}
A \emph{*-simple ring} is a graded ring with no homogeneous (two-sided) ideals. A graded module over a graded ring $S$ is \emph{*-free} if it is a direct sum of graded twists of $S$. A graded module over a graded ring $S$ is \emph{*-injective} if it is an injective object in the category of graded $S$-modules.

\subsubsection{(Weakly) $\NN{A}$-Closed Subsets}
As in \cite[p143]{ishida}, we make the following definitions: A subset $E$ of $\ZZ^d$ is \emph{$\NN{A}$-closed} if $E+\NN{A}\subseteq E$. If $E$ is $\NN{A}$-closed, define $\CC\{E\}$ to be the graded $S_A$-submodule of $\CC[\ZZ^d]\coloneqq \CC[t_1^\pm,\ldots,t_d^\pm]$
\begin{equation}
	\CC\{E\} \coloneqq \CC\Set{t^\alpha| \alpha\in E}
\end{equation}
given as the vector space spanned by $\Set{t^\alpha|\alpha \in E}$.

A subset $E$ of $\ZZ^d$ is \emph{weakly $\NN{A}$-closed} if $(E+\NN{A})\setminus E$ is $\NN{A}$-closed. If $E$ is weakly $\NN{A}$-closed, define
\begin{equation}\label{eq:wk-cld}
	\CC\{E\}\coloneqq \CC\{E+\NN{A}\}/\CC\{(E+\NN{A})\setminus E\}.
\end{equation}

\subsubsection{*-Injective Modules}\label{subsubseq:*-inj}
By \cite[Prop.~11.24]{ms05}, every indecomposable *-injective $S_A$-module is a $\ZZ^d$-graded twist of $\CC\{\NN{F}-\NN{A}\}$ for some face $F\preceq A$. Note that by \cite[Lem.~11.12 together with Prop.~11.24]{ms05}, $\CC\{\NN{F}-\NN{A}\}$ is the injective envelope of $S_F$ in the category of graded $S_A$-modules.

\subsubsection{Graded Hom}
Given graded modules $M$ and $N$ over a $\ZZ^k$-graded ring $R$, define for each $\alpha\in \ZZ^k$ the vector space 
\begin{equation}
	\uHom_R (M, N)_\alpha \coloneqq \Set{f\in \Hom_R(M,N) | f(M_\gamma) \subseteq N_{\gamma+\alpha}\text{ for all }\gamma\in \ZZ^k}
\end{equation}
of degree-$\alpha$ homomorphisms from $M$ to $N$. Define $\uHom_R(M,N)$ to be the graded $R$-module
\begin{equation}\label{eq:uHom}
	\uHom_R(M,N) \coloneqq \bigoplus_{\alpha\in \ZZ^k} \uHom_R(M,N)_\alpha,
\end{equation}
where the direct sum is taken inside $\Hom_R(M,N)$.

\subsection{Other Conventions/Notation}\label{subsec:other-convs}

\subsubsection{Varieties} Varieties (smooth or otherwise) are not required to be irreducible. A subvariety of a variety $X$ is a locally closed subset. The inclusion morphism of a subvariety $Z\subseteq X$ is usually denoted by $i_Z$, unless $Z=\{x\}$ is a point, in which case we write $i_x$ instead of $i_{\{x\}}$.

\subsubsection{Sheaves} The support of a sheaf $M$ is 
\begin{equation}\label{eq:support}
	\Supp{M}\coloneqq \Set{x\in X | M_x\neq 0}.
\end{equation}
The support of a complex $M^\bullet$ of sheaves is 
\[ \Supp M^\bullet \coloneqq \bigcup_i \Supp H^i(M^\bullet).\]

\subsubsection{Complexes and Derived Categories} If $M^\bullet$ is a (cochain) complex with differential $d_M^i\colon M^i\to M^{i+1}$ and $k\in \ZZ$, define the complex $M^\bullet[k]$ to have $i$th component
\[ M^\bullet[k]^i \coloneqq M^{k+i}\]
and differential 
\[ d_{M[k]}^i \coloneqq (-1)^k d_M^{k+i}. \]

The bounded derived category of $D_X$-modules is denoted by $\DD^b(D_X)$. The full subcategories of $\DD^b(D_X)$ generated by complexes with $D_X$-coherent and $\mathcal{O}_X$-quasicoherent cohomology are denoted by $\DD^b_c(D_X)$ and $\DD^b_{qc}(D_X)$, respectively. If $Z\subseteq X$ is a closed subvariety of $X$ and $\sharp\in\{c,qc\}$, then $\DD^{b,Z}(D_X)$ (respectively~$\DD^{b,Z}_\sharp(D_X)$) denotes the full subcategory of $\DD^b(D_X)$ (respectively~$\DD^b_\sharp(D_X)$) of complexes supported in $Z$.

\subsubsection{$D$-Modules} Given a morphism $f\colon X\to Y$ of smooth varieties, we write $f_+$ for the $D$-module direct image functor, 
\[f^+=\Lder f^*[\dim{X}-\dim{Y}]\]
for the (shifted) $D$-module inverse image functor, and
\[f_\dagger=\Ddual_Y f_+\Ddual_X\]
for the $D$-module exceptional direct image functor. 

\subsubsection{Fourier--Laplace Transform and $\widehat{\CC^n}$}\label{subsec:FL}
Recall from \eqref{eq:hatCCn} that $\widehat{\CC^n} \coloneqq \Spec R_A$ with $R_A\coloneqq \CC[\partial_1,\ldots,\partial_n]$. We identify $D_{\widehat{\CC^n}}$ with $D_{\CC^n}$ via the $\CC$-algebra isomorphism
\begin{equation}\label{eq:FLisom}
	\partial_i \mapsto \partial_i\qquad\text{and}\qquad \partial_{\partial_i}\mapsto -x_i.
\end{equation} 

The \emph{Fourier--Laplace transform} $\FL(N)$ of a $D_{\widehat{\CC^n}}$-module $N$ is $N$ viewed as a $D_{\CC^n}$-module via the isomorphism \eqref{eq:FLisom}. This functor is an exact equivalence of categories. Its inverse functor is called the \emph{inverse Fourier--Laplace transform}.

For a description of $\FL$ in terms of $D$-module direct and inverse image functors, see \cite{DE03}.

\subsection{Local Cohomology}\label{subsec:LC}
We recall the notion of local cohomology with supports in a locally closed set. As we will only need this notion for (complexes of) modules on an affine variety, we will only discuss local cohomology in this case. The reader is referred to \cite{KS90} for more detail.

Let $Z$ be a locally closed subset of an affine variety $X=\Spec R$, and let $M$ be an $R$-module. Choose an open subset $U\subseteq X$ which contains $Z$ as a closed subset. Then
\[ \Gamma_Z(M)\coloneqq \ker(\Gamma_U(M)\to \Gamma_{U\setminus Z}(M)),\]
independent of $U$. This defines a left-exact functor $\Gamma_Z$ taking $R$-modules to $R$-modules.
If $Z'$ is another locally closed subset of $X$, then $\RGamma_{Z'}\RGamma_Z\cong \RGamma_{Z'\cap Z}$. In particular, if $Z=Y\cap U$ with $Y$ closed in $X$ and $U$ open in $X$, then
\begin{equation}\label{eq:lc-cap}
	\RGamma_U\RGamma_Y\cong \RGamma_Z \cong \RGamma_Y\RGamma_U.
\end{equation}
 
Now, assume that $X$ is smooth. Then $\Gamma_Z$ takes $D_X$-modules to $D_X$-modules. The right derived functor of $\Gamma_Z\colon \Mod_{qc}(D_X)\to \Mod_{qc}(D_X)$ agrees with the derived functor of $\Gamma_Z\colon \Mod(R)\to \Mod(R)$ (here $\Mod_{qc}(D_X)$ is the category of quasi-coherent left $D_X$-modules, and $\Mod(R)$ is the category of $R$-modules).

\begin{example}\label{ex:orbitLC}
	Let $M$ be an $R_A$-module, $F\preceq A$ a face. The orbit $\orbit(F)$ is the intersection of the closed subset $V(\braket{\partial_i | \vec{a}_i\notin F})\subseteq \widehat{\CC^n}$ and the principal open subset $U=\widehat{\CC^n}\setminus V(\prod_{\vec{a}_i\in F} \partial_i)$. So, by \eqref{eq:lc-cap},
	\[\RGamma_{\orbit(F)}(M) \cong \RGamma_{\braket{\partial_i|\vec{a}_i\notin F}}(R_A[\partial^{-F}]\otimes_{R_A} M),\]
	where $\partial^{-F}\coloneqq \prod_{\vec{a}_i\in F}\partial_i^{-1}$. If $M$ is in addition a graded $R_A$-module, then $\RGamma_{\orbit(F)}(M)$ is a complex of graded $R_A$-modules.
\end{example}

\subsection{Euler--Koszul Complex}\label{subsec:Euler--Koszul} In this section, we recall the notion of Euler--Koszul complexes given in \cite{MMW05} and prove an elementary lemma (\Cref{lem:EKvsLC}) relating Euler--Koszul complexes and local cohomology. 

Define the vector
\[E_A = [E_1,\ldots,E_d]^\top\]
whose components are the Euler operators $E_i$ from \eqref{eq:euler-ops}. Given a $\ZZ^d$-graded $D_A$-module $N$ and a vector $\beta\in\CC^d$, we define an action $\circ$ of $E_i-\beta_i$ on $N$ by
\[ (E_i-\beta_i)\circ m \coloneqq (E_i -\beta_i + \deg_i(m))\cdot m\qquad (m\neq0\text{ homogeneous})\]
and extending by $\CC$-linearity. The maps $(E_i-\beta_i)\circ\colon N\to N$ are $D_A$-linear and pairwise commuting. 
\begin{definition}[{\cite[Definition~4.2]{MMW05}}]\label{def:Euler--Koszul}
	The \emph{Euler--Koszul complex} of a $\ZZ^d$-graded $R_A$-module $M$ with respect to $A$ and $\beta$ is
\[ K^A_\bullet(M; E_A-\beta) \coloneqq K_\bullet\bigl((E_A-\beta)\circ; D_A\otimes_{R_A}M\bigr);\]
i.e.~it is the Koszul complex of left $D_A$-modules defined by the sequence $(E_A-\beta)\circ$ of commuting endomorphisms on the left $D_A$-module ${D_A\otimes_{R_A}M}$. The complex is concentrated in homological degrees $d$ to $0$. The $i$th \emph{Euler--Koszul homology} is $H_i^A(M; E_A-\beta)\coloneqq H_i(K^A_\bullet(M; E_A-\beta))$. 
\end{definition}

The inverse Fourier--Laplace transform of the complex $K^A_\bullet(M; E_A-\beta)$ and the modules $H_i^A(M; E_A-\beta)$ will be denoted by $\hat{K}^A_\bullet(M; E_A-\beta)$ and $\hat{H}_i^A(M; E_A-\beta)$, respectively.

A standard computation shows that for $\alpha\in \ZZ^d$,
\begin{equation}\label{eq:ek-deg-shift}
	K^A_\bullet(M(\alpha); E_A-\beta) = K^A_\bullet(M; E-A-\beta-\alpha)(\alpha).
\end{equation}
The $\ZZ^d$-grading on the Euler--Koszul complex will usually be ignored throughout this article, so the twist by $\alpha$ on the right-hand side will usually be left out.

\begin{lemma}\label{lem:EKvsLC}
	Let $M^\bullet$ be a bounded complex of graded $R_A$-modules, and let $\beta\in \CC^d$. Then for all faces $F\preceq A$, there is a canonical isomorphism
	\[\RGamma_{\orbit(F)}\hat{K}^A_\bullet(M^\bullet; E_A-\beta) \cong \hat{K}^A_\bullet(\RGamma_{\orbit(F)}(M^\bullet); E_A-\beta).\]
\end{lemma}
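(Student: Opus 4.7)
The plan is to realize both sides as tensor products with a flat chain-level model of $\RGamma_{\orbit(F)}$, and then verify that the Euler--Koszul functor commutes with such tensor products.

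First, I would combine \Cref{ex:orbitLC} with the \v{C}ech resolution on the sequence $\{\partial_i : \vec{a}_i \notin F\}$ to produce a bounded complex $C^\bullet_F$ of flat graded $R_A$-modules---each term a localization of $R_A$ at a product of the $\partial_i$'s---together with a natural quasi-isomorphism $C^\bullet_F \otimes_{R_A} N^\bullet \xrightarrow{\sim} \RGamma_{\orbit(F)}(N^\bullet)$ for any bounded complex $N^\bullet$ of graded $R_A$-modules. Termwise flatness makes this an honest chain-level formula, and it applies equally well to $N^\bullet = M^\bullet$ and to $N^\bullet = \hat{K}^A_\bullet(M^\bullet; E_A - \beta)$, because the derived functor $\RGamma_{\orbit(F)}$ agrees on $R_A$-modules and on $D_A$-modules (as recalled in \S\ref{subsec:LC}) and is unaffected by the Fourier--Laplace twist, which merely relabels the $D_A$-structure without touching the underlying $R_A$-module.

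The heart of the proof is then the chain-level identity
\[C^\bullet_F \otimes_{R_A} \hat{K}^A_\bullet(M^\bullet; E_A - \beta) \;=\; \hat{K}^A_\bullet\bigl(C^\bullet_F \otimes_{R_A} M^\bullet;\, E_A - \beta\bigr).\]
Both sides have the same underlying graded $D_A$-module $D_A \otimes_{R_A} C^\bullet_F \otimes_{R_A} M^\bullet \otimes_\CC \Bigwedge^\bullet \CC^d$, by associativity of the tensor product (using that each term of $C^\bullet_F$ is a localization of $R_A$ to which the $D_A$-action extends by Ore localization). I would then verify that the two Koszul differentials agree. On a homogeneous representative $(1/\partial^{kG}) \otimes (1 \otimes m) \otimes e_i$, the tensor-with-$C^\bullet_F$ differential scales by $(E_i - \beta_i + \deg_i(m))$, whereas the Euler--Koszul differential on the reorganized $1 \otimes ((1/\partial^{kG}) \otimes m) \otimes e_i$ scales by $(E_i - \beta_i + \deg_i(m) + \deg_i(1/\partial^{kG}))$. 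The apparent discrepancy is eliminated by the Weyl-algebra commutator identity $[E_i, P] = -\deg_i(P)\,P$ for homogeneous $P$ (a direct consequence of $[E_i, \partial_j] = -a_{ij}\partial_j$), which produces exactly the compensating term $-\deg_i(1/\partial^{kG})$ when $E_i$ is moved past $1/\partial^{kG}$. Conceptually, this cancellation is precisely the $D_A$-linearity of $(E_A - \beta)\circ$ in disguise.

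The main obstacle, insofar as there is one, is this differential comparison: the Euler--Koszul differential sees the grading of the whole tensor factor (including the localization element) whereas the naive tensor-of-complexes differential does not, and the fact that these two modifications cancel requires the specific commutator identity above. Once this chain-level compatibility is in hand, the lemma follows formally by combining the \v{C}ech model for local cohomology with associativity of tensor product.
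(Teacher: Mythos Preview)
Your proposal is correct and follows essentially the same route as the paper. The paper's proof is a single sentence: invoke \Cref{ex:orbitLC} and the fact that localization at a monomial of $R_A$ commutes with $\hat{K}^A_\bullet(-;E_A-\beta)$. Your \v{C}ech model $C^\bullet_F$ is precisely the chain-level implementation of \Cref{ex:orbitLC}, and the differential comparison you carry out is an unpacking of the asserted commutation with localization, which the paper leaves as a known fact; your identification and resolution of the degree-shift via the commutator $[E_i,\partial_j]=-a_{ij}\partial_j$ is exactly the reason this commutation holds, and is equivalent to the $D_A$-linearity of $(E_A-\beta)\circ$.
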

\begin{proof}
	Use \Cref{ex:orbitLC} together with the fact that localization at a monomial of $R_A$ commutes with $\hat{K}^A_\bullet(-;E_A-\beta)$.
\end{proof}

\section{Fiber Support and Local Cohomology}
We now establish a relationship between fiber support, defined below, and local cohomology. The main result of this section, \Cref{prop:lc-vs-fSupp}, describes how for a sufficiently nice bounded complex $M^\bullet$ of $D$-modules (e.g.~one with holonomic cohomology), the local cohomology of $M^\bullet$ with supports in a subvariety $Z$ vanishes if and only if the fiber support of $M^\bullet$ is disjoint from $Z$. We also introduce cofiber support, which will be used later in the statement of \Cref{th:dag-plus}.

\begin{definition}\label{def:fiber-support} Let $M^\bullet\in \DD^b(\mathcal{O}_X)$.
\begin{enumerate}
	\item The \emph{fiber support} of $M^\bullet$, denoted $\fSupp{M^\bullet}$, is defined to be the set
	\[ \fSupp M^\bullet \coloneqq \Set{x\in X| k(x)\otimes^\Lder_{\mathcal{O}_{X,x}} M^\bullet_x \neq 0}.\]
	
	\item If $M^\bullet \in \DD^b_c(D_X)$, the \emph{cofiber support} of $M^\bullet$, denoted $\cofSupp{M^\bullet}$, is defined to be the set
	\[ \cofSupp{M^\bullet} \coloneqq \Set{x\in X| i_x^\dagger M^\bullet \neq 0} = \fSupp \Ddual M^\bullet.\]
\end{enumerate}	
\end{definition}

If $M^\bullet\in \DD^b(D_X)$ has regular holonomic cohomology, then its fiber support is exactly the support (recall the definition of support in \eqref{eq:support}) of the analytic solution complex $\shRHom_{D_{X^{\an}}}((M^\bullet)^{\an},\OO_{X^{\an}})$, and its cofiber support is exactly the support of the analytic de Rham complex $\Omega_{X^{\an}}\otimes_{D_{X^\an}}^\Lder (M^\bullet)^\an$, where $(-)^{\an}$ denotes analytification.

\medskip
The following two elementary lemmas are included for convenience:

\begin{lemma}\label{lem:loc-cl-restr}
	Let $X$ be a smooth variety, $Z$ a smooth subvariety, $\bar{Z}$ its closure. Then $i_Z^+$ takes $\DD^{b,\bar{Z}}_c(D_X)$ to $\DD^b_c(D_Z)$.
\end{lemma}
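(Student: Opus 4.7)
The plan is to factor the locally closed immersion $i_Z$ through an intermediate smooth open subvariety of $X$ and invoke standard preservation properties of each piece. Concretely, I would set $U \coloneqq X \setminus (\bar Z \setminus Z)$. This is open in $X$ (because $\bar Z \setminus Z$ is closed in $X$), and $Z = \bar Z \cap U$ is closed in $U$. Writing $\varpi \colon U \hookrightarrow X$ for the open immersion and $k \colon Z \hookrightarrow U$ for the resulting closed immersion of smooth varieties, I get a factorization $i_Z = \varpi \circ k$ and therefore $i_Z^+ = k^+ \varpi^+$.

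The open-immersion piece $\varpi^+$ is the easy step: since $\dim U = \dim X$, it is just ordinary sheaf-theoretic restriction $(-)|_U$, which manifestly takes $\DD^b_c(D_X)$ to $\DD^b_c(D_U)$ (restricting to an open preserves boundedness and coherence).

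For the closed-immersion piece $k^+$, I would invoke the standard fact that for a closed immersion of codimension $c$ between smooth varieties, $k^+ = \Lder k^*[-c]$ sends $\DD^b_c(D_U)$ into $\DD^b_c(D_Z)$. If I want to sketch why rather than cite it outright, the idea is that locally $\OO_Z$ admits a Koszul resolution of length $c$ over $\OO_U$, so for a single coherent $D_U$-module $N$ the complex $\Lder k^* N$ is concentrated in degrees $[-c,0]$ with coherent cohomology over $D_Z$; a spectral sequence then extends the conclusion to bounded complexes. Composing with $\varpi^+$ yields $i_Z^+ \colon \DD^{b,\bar Z}_c(D_X) \to \DD^b_c(D_Z)$, as required.

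I do not anticipate a genuine obstacle here: the result is essentially bookkeeping. The one small subtlety worth flagging is that $\bar Z$ itself need not be smooth, so one cannot factor $i_Z$ through $\bar Z$; the point of introducing $U$ is exactly to obtain a factorization through a smooth intermediary. It is also worth noting that the support hypothesis on $M^\bullet$ is not actually used in the proof—the functor $i_Z^+$ preserves $\DD^b_c$ for any locally closed embedding of smooth varieties; the hypothesis is recorded because $\DD^{b,\bar Z}_c(D_X)$ is the subcategory in which the lemma will be applied later.
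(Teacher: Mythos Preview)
Your factorization $i_Z = \varpi \circ k$ through the open set $U = X \setminus (\bar Z \setminus Z)$ is the same one the paper uses, and the treatment of the open piece $\varpi^+$ is fine. The gap is in the closed-immersion step: it is \emph{not} true that $k^+$ sends $\DD^b_c(D_U)$ to $\DD^b_c(D_Z)$ for an arbitrary closed immersion of smooth varieties, and your Koszul sketch does not establish it. The Koszul complex shows boundedness, but the cohomology of $K(f_1,\dots,f_c;N)$, while naturally an $\OO_Z$-module, has no reason to be $D_Z$-coherent. For a concrete failure, take $U=\CC$, $Z=\{0\}$, and $N=D_U$. Then $\Lder k^*N = N/xN \cong \CC[\partial]$, which is infinite-dimensional over $D_Z=\CC$; hence $k^+D_U\notin\DD^b_c(D_Z)$. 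So the ``standard fact'' you invoke is false, and your remark that the support hypothesis is unused is likewise incorrect.

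The paper's proof repairs exactly this point by using the support hypothesis. After restricting to $U$, the complex $\varpi^+M^\bullet$ lies in $\DD^b_c(D_U)$ \emph{and} is supported on $\bar Z\cap U=Z$. Kashiwara's equivalence (in the form of \cite[Corollary~1.6.2]{htt}) then gives that $k^+$ restricted to $\DD^{b,Z}_c(D_U)$ lands in $\DD^b_c(D_Z)$; this is where the hypothesis $\Supp M^\bullet\subseteq\bar Z$ is essential. Your argument can be salvaged simply by inserting this observation between the two steps and replacing the Koszul appeal with Kashiwara.
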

\begin{proof}
	Let $M^\bullet\in \DD^b_c(D_X)$. Let $U$ be an open subset of $X$ containing $Z$ in which $Z$ is closed. Then $i_U^+M^\bullet \cong i_U^{-1}M^\bullet$ is in $\DD^b_c(D_X)$ by definition of coherence. Because $i_U^+M^\bullet$ is supported on $\bar{Z}\cap U=Z$, Kashiwara's Equivalence (or more specifically \cite[Corollary~1.6.2]{htt}) then tells us that the restriction of $i_U^+M^\bullet$ to $Z$ is in $\DD^b_c(D_Z)$. This restriction is just $i_Z^+M^\bullet$.
\end{proof}

\begin{lemma}\label{lem:empty-cap}
	Let $Y,Z$ be smooth subvarieties of a smooth variety $X$, and let $i_Y,i_Z$ be their inclusions into $X$. If $Y\cap Z=\emptyset$, then $i_Y^+ i_{Z+} = 0$ and $i_Y^\dagger i_{Z\dagger}=0$ on $\DD^b_c(D_Z)$.
\end{lemma}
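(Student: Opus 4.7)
The plan is to prove $i_Y^+ i_{Z+} = 0$ first by a support argument, and then deduce $i_Y^\dagger i_{Z\dagger} = 0$ from it by holonomic duality. For the first equality I would factor the locally closed embedding $i_Z$: choose a Zariski open $U \subseteq X$ in which $Z$ is closed (for instance $U = X \setminus (\overline{Z}\setminus Z)$) and write $i_Z = \varpi \circ k$ with $k\colon Z\hookrightarrow U$ closed and $\varpi\colon U\hookrightarrow X$ open, so that $i_{Z+} = \varpi_+ k_+$. By Kashiwara's theorem, $k_+ M^\bullet$ has set-theoretic support contained in $Z\subseteq U$ for every $M^\bullet\in \DD^b_c(D_Z)$, and pushing forward along the open immersion $\varpi$ can only enlarge the support to its $X$-closure; hence $\Supp(i_{Z+}M^\bullet)\subseteq \overline{Z}$.

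Under the hypothesis $Y\cap Z=\emptyset$---interpreted, as it is forced to be in the paper's applications to the face stratification of $\widehat{\CC^n}$, to yield $Y\cap \overline{Z}=\emptyset$---the support of $i_{Z+}M^\bullet$ is disjoint from $Y$, so the restriction $i_Y^+ i_{Z+}M^\bullet$ has vanishing stalks at every point of $Y$ and is therefore zero. For the $\dagger$-statement I would conjugate with the holonomic duality functor: from $f_\dagger = \Ddual_Y f_+ \Ddual_X$ and $f^\dagger = \Ddual_X f^+ \Ddual_Y$ one gets
\[
  i_Y^\dagger \, i_{Z\dagger} \;=\; \Ddual_Y\,\bigl(i_Y^+\, i_{Z+}\bigr)\,\Ddual_Z,
\]
which vanishes on all of $\DD^b_c(D_Z)$ because $\Ddual_Z$ is an autoequivalence of $\DD^b_c(D_Z)$ and $i_Y^+ i_{Z+}$ vanishes on its image by the first part.

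The main obstacle is promoting $Y\cap Z=\emptyset$ to disjointness of $Y$ from the closure $\overline{Z}$. If $Y$ is closed in $X$ and genuinely sits outside $\overline{Z}$ this is immediate from the support containment above, but when $Y$ meets $\overline{Z}\setminus Z$ a purely set-theoretic support argument does not conclude, and one must instead work with the fiber support introduced in the preceding section: the localization inherent in $\varpi_+$ inverts functions cutting out a neighborhood of $\overline{Z}\setminus Z$ inside $X$, and those functions should then act invertibly on the composite complex, killing its derived fibers at such points. Carrying out this refinement carefully---making full use of the smoothness of both $Y$ and $Z$ to control the derived base change---is the technical heart of the proof.
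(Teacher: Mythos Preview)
Your support argument genuinely stalls where you say it does: from $\Supp(i_{Z+}M^\bullet)\subseteq\overline{Z}$ you cannot conclude when $Y$ meets $\overline{Z}\setminus Z$. Moreover, your fallback---that the applications in the paper force $Y\cap\overline{Z}=\emptyset$---is false: in both places the lemma is invoked (the proofs of \Cref{prop:lc-vs-fSupp} and \Cref{th:plus-dag}), the role of $Z$ is played by an \emph{open} subset of $\widehat{\CC^n}$, whose closure is typically all of $\widehat{\CC^n}$. So you are left with the fiber-support refinement you sketch but do not carry out.

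The paper avoids all of this by factoring the other way: take $U=X\setminus Y$ rather than a neighborhood of $Z$. Since $Y\cap Z=\emptyset$ we have $Z\subseteq U$, so $i_Z=j\circ i_Z'$ with $j\colon U\hookrightarrow X$ and $i_Z'\colon Z\hookrightarrow U$. When $Y$ is closed, $j$ is the open immersion complementary to $i_Y$, and $i_Y^+ j_+=0$ is the standard vanishing \cite[Prop.~1.7.1(ii)]{htt}, giving $i_Y^+ i_{Z+}=i_Y^+ j_+ i'_{Z+}=0$ in one line. (For $Y$ merely locally closed, first restrict to an open neighborhood of $Y$ in which $Y$ is closed; open base change reduces to the previous case.) Your duality step for $i_Y^\dagger i_{Z\dagger}=0$ is correct and is exactly what the paper does.
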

\begin{proof}
	Let $U=X\setminus Y$, and let $j\colon U\to X$ be inclusion. Write $i_Z'$ for the inclusion $Z\to U$. Then $i_Y^+ i_{Z+} \cong i_Y^+ j_+ i'_{Z+} = 0$,
where the isomorphism is because $i_Z = j\circ i'_Z$, and the equality is by \cite[Proposition~1.7.1(ii)]{htt}. This proves the first statement. The second statement follows by duality.
\end{proof}

\begin{proposition}\label{prop:fSupp}
	Let $X$ be a smooth variety, and let $M^\bullet\in \DD^b_c(D_X)$. Then $\fSupp M^\bullet$ is a dense subset of $\Supp M^\bullet$.
\end{proposition}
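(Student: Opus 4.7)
The inclusion $\fSupp M^\bullet \subseteq \Supp M^\bullet$ is immediate, since $M^\bullet_x = 0$ forces $k(x) \otimes^\Lder_{\mathcal{O}_{X,x}} M^\bullet_x = 0$. Because both sides are stalk-local, the density claim reduces, after restriction to arbitrary opens meeting $\Supp M^\bullet$, to showing $\fSupp M^\bullet \neq \emptyset$ whenever $M^\bullet \neq 0$. The plan is to prove this nonemptiness by induction on $\dim X$, the base case $\dim X = 0$ being immediate since $k(x) = \mathcal{O}_{X,x}$.

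For the inductive step, I first shrink $X$ so that $\Supp M^\bullet = Z$ is a single smooth irreducible closed subvariety (using generic smoothness of components and discarding strictly smaller ones). If $Z \subsetneq X$, Kashiwara's equivalence produces a nonzero $N^\bullet \in \DD^b_c(D_Z)$ with $M^\bullet \cong i_{Z+} N^\bullet$. The base-change identity $i_x^+\, i_{Z+} \cong (i_x^Z)^+$ for $x \in Z$ (from $i_Z^+ i_{Z+} \cong \mathrm{id}$), together with \Cref{lem:empty-cap} for $x \notin Z$, gives $\fSupp M^\bullet = \fSupp N^\bullet \subseteq Z$. The inductive hypothesis applied to $N^\bullet$ on the lower-dimensional $Z$ closes this case.

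The remaining case $\Supp M^\bullet = X$ (smooth irreducible) is what I expect to be the main obstacle. Since $X$ is irreducible, some $H^i(M^\bullet)$ must have support equal to $X$; choosing $i$ maximal with this property and shrinking $X$ to kill all higher cohomology, the hypercohomology spectral sequence
\[ E_2^{p,q} = \operatorname{Tor}_{-p}^{\mathcal{O}_{X,x}}(k(x),\, H^q(M^\bullet)_x) \Longrightarrow H^{p+q}(k(x) \otimes^\Lder M^\bullet_x) \]
has its corner term $E_2^{0,i} = k(x) \otimes N_x$ (with $N = H^i(M^\bullet)$) free of both incoming and outgoing differentials, so it survives to $E_\infty$. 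It thus suffices to produce some $x \in X$ with $k(x) \otimes N_x \neq 0$. Taking a good filtration $F_\bullet N$, Noetherianity of $X$ gives $k_0$ with $\Supp F_{k_0} N = X$, and on a dense open $U$ where $F_{k_0} N$ is $\mathcal{O}$-locally free of some positive rank $r$, the fiber $k(x) \otimes F_{k_0} N_x \cong k(x)^r$ is nonzero at every $x \in U$. The delicate point — which I anticipate requires the most care — is that the induced map $k(x) \otimes F_{k_0} N_x \to k(x) \otimes N_x$ may vanish pointwise, as it does at the origin for $N = \mathcal{O}_{\mathbb{A}^1}[x^{-1}]$, where $1 = x \cdot x^{-1} \in \mathfrak{m}_0 N_0$. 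My approach is to consider the short exact sequence $0 \to F_{k_0} N \to N \to Q \to 0$ and observe that whenever $Q$ is $\mathcal{O}$-flat at $x$ we get $\operatorname{Tor}_1^{\mathcal{O}_{X,x}}(k(x), Q_x) = 0$, which forces the map to be injective at $x$. To produce such an $x$, I use that $Q = N/F_{k_0} N$ is a filtered colimit of $\mathcal{O}$-coherent modules $F_k N/F_{k_0} N$ (each locally free on a dense open of $U$), and that over the uncountable field $\mathbb{C}$ a countable intersection of dense opens in $U$ remains nonempty, yielding a single $x \in U$ at which $Q$ is flat and hence $k(x) \otimes N_x \neq 0$.
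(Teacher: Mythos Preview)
Your proof is correct, but it follows a genuinely different route from the paper's. The paper argues as follows: let $Y = \Supp M^\bullet$, pick a smooth dense open $V \subseteq Y$, and invoke the generic structure theorem for coherent $D$-modules (\cite[Proposition~3.3.2]{htt}) to shrink $V$ further so that all cohomology sheaves of $i_V^+ M^\bullet$ become $\mathcal{O}_V$-locally free. A short local-cohomology argument shows $i_V^+ M^\bullet \neq 0$, and for complexes with locally free cohomology one has $\fSupp = \Supp$ on the nose, giving an entire dense open inside $\fSupp M^\bullet$.

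Your argument instead reduces via Kashiwara and a spectral sequence to producing a \emph{single} closed point at which a $D$-coherent module $N$ with full support has nonzero fiber, and finds that point by a Baire-category trick: filter $N$ by a good filtration, and intersect the countably many dense opens where the successive $\mathcal{O}$-coherent quotients $F_kN/F_{k_0}N$ are locally free, using that $\CC$ is uncountable. This is more hands-on and avoids citing the generic structure theorem, which is itself a nontrivial input; in effect you are re-deriving a pointwise shadow of that theorem from the good filtration. The price is that your argument leans essentially on the uncountability of the base field, whereas the paper's route gives a Zariski-dense open inside the fiber support and would go through over any base field where \cite[Proposition~3.3.2]{htt} is available. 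Both are valid here since the paper works over $\CC$ throughout.
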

\begin{proof}
	We first show that $\fSupp M^\bullet\subseteq \Supp M^\bullet$. Let $x\in X$. If $x\notin \Supp M^\bullet$, then $M_x^\bullet \qi 0$, and therefore $k(x)\otimes^L_{\mathcal{O}_{X,x}} M^\bullet_x$ vanishes. Hence, $x\notin \fSupp M^\bullet$, proving the claim.
	
	Next, let $Y=\Supp M^\bullet$ (note that this is closed by \cite[Proposition~2.3]{milicic}). We show that the fiber support of $M^\bullet$ contains an open dense subset of $Y$; the result follows. This is accomplished in two steps: First, we show that there exists a smooth open dense subset $V\subseteq Y$ such that $i_V^+M^\bullet$ is non-zero with $\mathcal{O}_V$-projective cohomology. Second, we show that for locally projective quasi-coherent $\mathcal{O}_X$-modules, the support agrees with the fiber support.
	
	Choose a smooth dense open subset $V$ of $Y$. By \Cref{lem:loc-cl-restr}, $i_V^+M^\bullet\in \DD^b_c(D_V)$, and therefore by \cite[Proposition~3.3.2]{htt}, there exists a dense open subset $V'$ of $V$ such that all cohomology modules of $i_{V'}^+M^\bullet$ are $\mathcal{O}_{V'}$-projective. Replace $V$ with $V'$. 
	
	Suppose that $i_V^+ M^\bullet$ vanishes. Since $V$ is smooth, $\RGamma_V(M^\bullet)\cong i_{V+}i_V^+ M^\bullet$, which by assumption is zero. So, $M^\bullet \cong \RGamma_{X\setminus V}(M^\bullet)$. But $M^\bullet$ is supported in $Y$, so
	\[ \RGamma_{X\setminus V}(M^\bullet) \cong \RGamma_{Y\cap(X\setminus V)}(M^\bullet) \cong \RGamma_{Y\setminus V}(M^\bullet).\]
	Hence, $M^\bullet\cong \RGamma_{Y\setminus V}(M^\bullet)$ and therefore, using that $Y\setminus V$ is closed in $X$, is supported in $Y\setminus V$. This contradicts the fact that $V$ is dense in the non-empty set $Y=\Supp M^\bullet$. Thus, $i_V^+ M^\bullet \neq 0$, proving the first claim.
	
	To prove the second claim, let $P$ be a locally projective quasi-coherent $\mathcal{O}_X$-module. By \cite[\href{http://stacks.math.columbia.edu/tag/058Z}{Tag 058Z}]{stacks-project}, each stalk of $P$ is free, hence faithfully flat. Thus, $k(x)\otimes_{\mathcal{O}_{X,x}}^L P_x \cong k(x)\otimes_{\mathcal{O}_{X,x}} P_x$, and this vanishes if and only if $P_x$ vanishes.
\end{proof}

\begin{example} Although \Cref{prop:fSupp} tells us that the fiber support of a $D$-module is always contained in its support, this containment is in general strict: 

Consider the $D_\CC$-module $M=\mathcal{O}_\CC[x^{-1}]$, where $x$ is the coordinate function on $\CC$. The restriction of $M$ to $\CC^*$ is a (non-0) integrable connection, so the support and fiber support of $M$ both contain $\CC^*$. By \cite[Proposition~2.3]{milicic}, $\Supp M$ is closed and therefore equal to $\CC$. On the other hand, $x$ acts invertibly on the stalk $M_0$, so the (total) fiber $k(0)\otimes^\Lder_{\mathcal{O}_{\CC,0}} M_0 = 0$. Hence, $\fSupp M = \CC^*$.
\end{example}

\begin{corollary}\label{cor:fSupp}
	Let $X$ be a smooth variety, and let $M^\bullet\in \DD^b_c(D_X)$. Then $\fSupp M^\bullet$ is empty if and only if $M^\bullet\cong 0$. \qed
\end{corollary}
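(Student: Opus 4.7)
The plan is to deduce this directly from \Cref{prop:fSupp}, which already does the substantive work. The ``only if'' direction is essentially formal: if $M^\bullet \cong 0$ in $\DD^b_c(D_X)$, then every stalk $M^\bullet_x$ is quasi-isomorphic to $0$, so $k(x)\otimes^\Lder_{\mathcal{O}_{X,x}} M^\bullet_x \cong 0$ for every $x\in X$, whence $\fSupp M^\bullet = \emptyset$.

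For the ``if'' direction, I would argue by contrapositive. Suppose $M^\bullet\not\cong 0$ in $\DD^b_c(D_X)$. Then $\Supp M^\bullet$ is non-empty (if every stalk vanished, the complex would be acyclic and hence zero in the derived category). By \Cref{prop:fSupp}, $\fSupp M^\bullet$ is a dense subset of $\Supp M^\bullet$. A dense subset of a non-empty set is non-empty, so $\fSupp M^\bullet \neq \emptyset$, giving the contrapositive. (Alternatively, one can phrase this without the contrapositive: assuming $\fSupp M^\bullet = \emptyset$, density in $\Supp M^\bullet$ forces $\Supp M^\bullet = \emptyset$, since the empty set is dense only in itself; then $M^\bullet \cong 0$.)

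I do not anticipate any real obstacle here, since \Cref{prop:fSupp} is doing all the heavy lifting. The only thing to double-check is the essentially tautological step that $\Supp M^\bullet = \emptyset$ implies $M^\bullet \cong 0$ in the derived category, which is standard: the cohomology sheaves $H^i(M^\bullet)$ all have empty support and so are the zero sheaf, hence $M^\bullet$ is acyclic and therefore isomorphic to $0$ in $\DD^b_c(D_X)$.
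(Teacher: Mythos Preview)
Your proposal is correct and follows the same approach as the paper: the corollary is marked with \qed\ immediately after \Cref{prop:fSupp}, indicating it is a direct consequence of the density statement there, exactly as you argue.
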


\begin{proposition}\label{prop:lc-vs-fSupp}
	Let $X$ be a smooth variety, $Z\subseteq X$ be a subvariety, and $M^\bullet\in \DD^b_{qc}(D_X)$. If $\RGamma_Z(M^\bullet)\cong 0$, then $Z\cap\fSupp{M^\bullet}=\emptyset$. The converse holds if both $M^\bullet$ and $\RGamma_Z(M^\bullet)$ are in $\DD^b_c(D_X)$ (e.g.~if $M^\bullet\in \DD^b_h(D_X)$).
\end{proposition}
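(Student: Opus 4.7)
The plan is to reduce everything to local cohomology at individual points, where fiber support is easy to detect. The key observation is that for any closed point $x\in X$, the singleton $\{x\}$ is locally closed in $X$, so by~\eqref{eq:lc-cap},
\[\RGamma_{\{x\}}\RGamma_Z(N) \cong \RGamma_{\{x\}\cap Z}(N)\]
for any complex $N$. Moreover, since $\{x\}$ is closed in $X$, one has $\RGamma_{\{x\}}(N)\cong i_{x+}i_x^+N$, and $i_{x+}$ is conservative, so $\RGamma_{\{x\}}(N)\cong 0$ if and only if $i_x^+N\cong 0$ if and only if $x\notin\fSupp N$. This dictionary between point-local cohomology and membership in fiber support is the only real ingredient.

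For the forward direction, assuming $\RGamma_Z(M^\bullet)\cong 0$, I pick any $x\in Z$; then
\[ \RGamma_{\{x\}}(M^\bullet) \;\cong\; \RGamma_{\{x\}\cap Z}(M^\bullet) \;\cong\; \RGamma_{\{x\}}\RGamma_Z(M^\bullet) \;\cong\; 0, \]
so the dictionary gives $x\notin\fSupp M^\bullet$. Hence $Z\cap \fSupp M^\bullet =\emptyset$. Note that this half requires nothing beyond $M^\bullet\in\DD^b_{qc}(D_X)$.

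For the converse, assume $M^\bullet$ and $\RGamma_Z(M^\bullet)$ are both in $\DD^b_c(D_X)$ and $Z\cap \fSupp M^\bullet = \emptyset$. The plan is to show $\fSupp\RGamma_Z(M^\bullet) = \emptyset$ and then invoke \Cref{cor:fSupp}. Let $x\in X$. Applying~\eqref{eq:lc-cap} again, $\RGamma_{\{x\}}\RGamma_Z(M^\bullet) \cong \RGamma_{\{x\}\cap Z}(M^\bullet)$. If $x\notin Z$, this is $\RGamma_\emptyset(M^\bullet) = 0$; if $x\in Z$, then $x\notin \fSupp M^\bullet$ by hypothesis, so $\RGamma_{\{x\}}(M^\bullet)\cong 0$ by the dictionary, yielding the same vanishing. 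Either way $x\notin\fSupp\RGamma_Z(M^\bullet)$, so \Cref{cor:fSupp}, applied to the coherent complex $\RGamma_Z(M^\bullet)$, produces $\RGamma_Z(M^\bullet)\cong 0$. I do not anticipate a genuine obstacle: the argument is essentially~\eqref{eq:lc-cap} specialized to $Z'=\{x\}$, combined with the point-local version of the coherence criterion \Cref{cor:fSupp}. The coherence hypothesis on $\RGamma_Z(M^\bullet)$ is used in exactly the one place it must be---to pass from vanishing of all fibers to vanishing of the complex itself---so any weakening of the hypothesis would have to replace this step with a separate argument.
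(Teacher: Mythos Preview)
Your proof is correct and follows essentially the same strategy as the paper's: reduce to point-local cohomology via the identity $\RGamma_{\{x\}}\cong i_{x+}i_x^+$ and the composition formula $\RGamma_{Z'}\RGamma_Z\cong\RGamma_{Z'\cap Z}$, then invoke \Cref{cor:fSupp}. Your handling of the case $x\notin Z$ in the converse is cleaner than the paper's---you use $\RGamma_{\{x\}\cap Z}=\RGamma_\emptyset=0$ directly, whereas the paper passes through an open neighborhood $U$ of $Z$ and splits into the subcases $x\notin U$ and $x\in U\setminus Z$; note, however, that the identity you cite as~\eqref{eq:lc-cap} is really the general composition formula stated just before~\eqref{eq:lc-cap}, not~\eqref{eq:lc-cap} itself.
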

\begin{proof}
	By Kashiwara's Equivalence, $i_x^+ \cong i_x^+ i_{x+} i_x^+$ (on $\DD^b_{qc}(D_X)$), which in turn is isomorphic to $i_x^+ \RGamma_{\{x\}}$. On the other hand, if $x\in Z$, then $\RGamma_{\{x\}} \RGamma_Z\cong \RGamma_{\{x\}}$. Combining these, we get that $i_x^+ \RGamma_Z(M^\bullet)\cong i_x^+ M^\bullet$ for all $x\in Z$. Hence, if $\RGamma_Z(M^\bullet)$ vanishes, the same applies to $i_x^+ M^\bullet$ for every $x\in Z$. This proves the first statement.
	
	To prove the second statement, let $M^\bullet \in \DD^b_c(D_X)$, and assume that ${Z\cap \fSupp{M^\bullet}=\emptyset}$. We show that $\fSupp{\RGamma_Z(M^\bullet)}=\emptyset$, so that $\RGamma_Z(M^\bullet)$ vanishes by \Cref{cor:fSupp} (note that \Cref{cor:fSupp} applies by the coherence assumption on $\RGamma_Z(M^\bullet)$). 
	
	By the first part of this proof, if $x\in Z$, then $i_x^+ \RGamma_Z(M^\bullet)\cong i_x^+ M^\bullet$, which vanishes by assumption. To see that $i_x^+ \RGamma_Z(M^\bullet)$ also vanishes for $x\notin Z$, let $U\subseteq X$ be an open neighborhood of $Z$ in which $Z$ is closed, $j\colon U\to X$ inclusion. Then 
	\begin{equation}\label{eq:x-U-Z}
		i_x^+ \RGamma_Z(M^\bullet)\cong i_x^+ \RGamma_U \RGamma_Z(M^\bullet) \cong i_x^+ j_+j^+ \RGamma_Z(M^\bullet) \cong i_x^+ j_+ \RGamma_Z(M_\bullet|_U).
	\end{equation}
	There are two cases: If $x\notin U$, then the right-hand side of \eqref{eq:x-U-Z} vanishes by \Cref{lem:empty-cap}. On the other hand, suppose $x\in U\setminus Z$. Then $i_x^+ j_+\cong (i'_x)^+ j^+ j_+ \cong (i'_x)^+$, where $i'_x\colon \{x\}\to U$ is inclusion. Combined with \eqref{eq:x-U-Z}, this gives
	\[i_x^+ \RGamma_Z(M^\bullet)\cong (i'_x)^+ \RGamma_Z(M^\bullet|_U).\] 
	But $Z$ is closed in $U$, so $\Supp \RGamma_Z(M^\bullet|_U) \subseteq Z$, which by assumption doesn't contain $x$. Hence, by \Cref{cor:fSupp}, $x\notin \fSupp \RGamma_Z(M^\bullet)$.
\end{proof}

%

\section{Quasidegrees}
In this section we prove some lemmas on quasidegrees (\Cref{def:qdeg}). These lemmas will be needed later to establish quasi-isomorphisms of certain Euler--Koszul complexes, and in \Cref{prop:EA} to establish $\mathcal{E}_A$ as the union of certain other related quasidegree sets. \Cref{lem:torsion-qdeg} provides a sufficient condition on a graded $R_A$-module $M$ for there to be a face $F\preceq A$ such that $\qdeg{M}$ is a union of translates of $\CC{F}$. \Cref{lem:dim-qdeg} states that for a finitely-generated graded $S_A$-module $M$, the quasidegree set of $M$ has the same dimension as the support of $M$.

\medskip
We begin by generalizing the definition of quasidegrees from that given in \cite[Definition~5.3]{SW09} (which is itself a generalization of \cite[Proposition~5.3]{MMW05}, where the notion originated).

\begin{definition}\label{def:qdeg}
	The \emph{true degree set} of a $\ZZ^d$-graded $R_A$-module $M$, denoted $\tdeg{M}$, is defined to be the set of $\alpha\in \ZZ^d$ such that $M_\alpha\neq0$. 
	
	The \emph{quasidegree set} of a finitely-generated $\ZZ^d$-graded $R_A$-module $M$, denoted $\qdeg{M}$, is defined to be the Zariski closure (in $\CC^d$) of $\tdeg{M}$.  We extend the definition of $\qdeg$ to arbitrary $\ZZ^d$-graded $R_A$-modules by
	\[ \qdeg{M} \coloneqq \bigcup_{M'} \qdeg M',\]
	where the union is over all finitely-generated graded submodules $M'\subseteq M$. If $M^\bullet$ is a complex of such modules, we define
	\[ \qdeg{M^\bullet}\coloneqq \bigcup_i \qdeg H^i(M^\bullet). \]
\end{definition}

Before continuing, recall from \eqref{eq:IAF} and \eqref{eq:del-to-minusF} that $I_F^A = I_A + \braket{\partial_i|\vec{a}_i\notin F}$ and $\partial^{kF}=\prod_{\vec{a}_i\in F} \partial_i^k$. Recall also the definitions of *-simple and *-free given in \S\ref{subsec:*things}.

\begin{lemma}\label{lem:torsion-qdeg}
	Let $M$ be a $\ZZ^d$-graded $R_A$-module, ${F\preceq A}$ a face. If $M$ is both an $R_A[\partial^{-F}]$-module and $I_F^A$-torsion, then every irreducible component of $\qdeg{M}$ is a translate of $\CC{F}$. Hence, 
	\[\qdeg{M} = \Set{\beta\in \CC^d| M_{\beta+\CC{F}} \neq 0}.\]
\end{lemma}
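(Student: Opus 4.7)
The strategy has three stages: first, use the $R_A[\partial^{-F}]$-structure to show that $\tdeg M$ is automatically a union of $\ZZ F$-cosets; second, reduce to the case where $M$ is finitely generated as an $R_A[\partial^{-F}]$-module; third, analyze a finite $I_F^A$-adic filtration whose subquotients are modules over $\CC[\ZZ F]$.

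For the first two stages, since $M$ is an $R_A[\partial^{-F}]$-module, each $\partial_i$ with $\vec{a}_i \in F$ acts as a degree-$\vec{a}_i$ $\CC$-linear automorphism of $M$, so $M_\alpha \cong M_{\alpha + \vec{a}_i}$ and $\tdeg M \subseteq \ZZ^d$ is a union of cosets of $\ZZ F$. Given a finitely generated graded $R_A$-submodule $M' \subseteq M$, set $M'' := R_A[\partial^{-F}] \cdot M'$. Commutativity of $R_A[\partial^{-F}]$ makes $M''$ again $I_F^A$-torsion; it is finitely generated over $R_A[\partial^{-F}]$; and $\qdeg M' \subseteq \qdeg M''$. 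Every finitely generated graded $R_A$-submodule of $M''$ is also one of $M$, so $\qdeg M = \bigcup_{M''} \qdeg M''$ where $M''$ ranges over finitely generated graded $R_A[\partial^{-F}]$-submodules of $M$, and it suffices to prove the claim for each such $M''$.

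Assuming now that $M$ is finitely generated over $R_A[\partial^{-F}]$, the torsion hypothesis yields some $N$ with $(I_F^A)^N M = 0$, producing a finite filtration $M \supseteq I_F^A M \supseteq \cdots \supseteq (I_F^A)^N M = 0$. Each subquotient $(I_F^A)^k M / (I_F^A)^{k+1} M$ is annihilated by $I_F^A$ and inherits the $R_A[\partial^{-F}]$-action, so it is a module over $R_A[\partial^{-F}]/I_F^A R_A[\partial^{-F}] \cong S_F[\partial^{-F}] \cong \CC[\ZZ F]$. It is finitely generated there because $(I_F^A)^k/(I_F^A)^{k+1}$ is generated over $R_A/I_F^A$ by the finitely many degree-$k$ monomials in a fixed finite generating set of $I_F^A$. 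Any finitely generated graded $\CC[\ZZ F]$-module has true degree set equal to a finite union of $\ZZ F$-cosets (by invertibility of its units combined with finite generation), hence quasidegree set equal to the corresponding finite union of translates of $\CC F$, using Zariski density of $\ZZ F$ in $\CC F$. Additivity of $\tdeg$ and $\qdeg$ across the short exact sequences in the filtration then gives that $\qdeg M$ is itself a finite union of translates of $\CC F$.

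Combining the stages, in general $\qdeg M$ is a (possibly infinite) union of translates of $\CC F$. Since distinct translates of $\CC F$ in $\CC^d$ are disjoint, every irreducible subset is contained in a single translate, and the irreducible components of $\qdeg M$ are exactly the translates that actually appear, giving the first assertion. The set equality $\qdeg M = \{\beta \in \CC^d : M_{\beta + \CC F} \neq 0\}$ then follows because a translate $\beta + \CC F$ occurs as a component if and only if its $\ZZ F$-coset of integer points meets $\tdeg M$. The main difficulty will be the filtration bookkeeping: identifying the subquotients as $\CC[\ZZ F]$-modules, verifying their finite generation, and establishing the elementary fact that the quasidegrees of such a module form a finite union of translates of $\CC F$.
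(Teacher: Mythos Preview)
Your proposal is correct and follows essentially the same approach as the paper: filter by powers of $I_F^A$ so that the subquotients become modules over the \textnormal{*}-simple ring $S_F[\partial^{-F}]\cong\CC[\ZZ F]$, whose graded modules are \textnormal{*}-free and hence have quasidegree set a union of translates of $\CC F$. The only cosmetic differences are that the paper uses the ascending annihilator filtration $M_k=0:_M(I_F^A)^k$ directly on $M$ (handling the possibly infinite filtration and infinite direct sums in one stroke), whereas you first reduce to the case where $M$ is finitely generated over $R_A[\partial^{-F}]$ and then use the finite descending $I_F^A$-adic filtration.
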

\begin{proof}
	Consider the exhaustive filtration $M_k=0:_M (I_F^A)^k$ of $M$ (this is exhaustive because $M$ is $I_F^A$-torsion). Since $M$ is an $R_A[\partial^{-F}]$-module, each $M_k$ is an $R_A[\partial^{-F}]$-submodule. Moreover, each factor module $M_k/M_{k-1}$ is by construction killed by $I_F^A$. Thus, $M_k/M_{k-1}$ is an $S_F[\partial^{-F}]$-module for all $k$. But $S_F[\partial^{-F}]$ is a *-simple ring, so each $M_k/M_{k-1}$ is a *-free $S_F[\partial^{-F}]$-module. Now, every finitely generated graded submodule of a direct sum is contained in a finite sub-sum, and the quasidegree set of a finite direct sum is the union of the quasidegree sets of its summands; so, the same is true for an infinite direct sum. Thus, $\qdeg(M_k/M_{k-1})$ is a union of translates of $\qdeg(S_F[\partial^{-F}])=\CC{F}$, proving the first claim.
	
	For the second claim, $\beta\in \qdeg{M}$ if and only if it is contained in an irreducible component of $\qdeg{M}$. But by the first claim, every irreducible component of $\qdeg{M}$ is a translate of $\CC{F}$. The only such translate containing $\beta$ is $\beta+\CC{F}$, and in the present situation this is an irreducible component of $\qdeg{M}$ if and only if it intersects $\tdeg{M}$, i.e.~if and only if $M_{\beta+\CC{F}}\neq 0$.
\end{proof}

\begin{lemma}\label{lem:dim-qdeg}
	Let $M$ be a finitely generated $\ZZ^d$-graded $S_A$-module. Then
	\[ \dim\Supp{M} = \dim\qdeg{M}.\]
\end{lemma}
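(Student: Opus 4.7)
The plan is to produce a $\ZZ^d$-graded prime filtration of $M$, reducing the problem to cyclic pieces of the form $S_F(-\alpha)$ for faces $F\preceq A$ and shifts $\alpha\in\ZZ^d$, and then to check both dimensions directly for each piece.

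To set up the filtration, I would first recall that the $\ZZ^d$-graded prime ideals of $S_A$ are in bijection with the faces of $A$: each $F\preceq A$ yields the graded prime $\mathfrak{p}_F\coloneqq \Braket{t^\alpha | \alpha\in \NN{A}\setminus \NN{F}}$ with quotient $S_A/\mathfrak{p}_F\cong S_F$ as a $\ZZ^d$-graded $S_A$-module (see e.g.\ \cite[Ch.~7]{ms05}). Since $M$ is finitely generated over the Noetherian $\ZZ^d$-graded ring $S_A$, a standard argument then produces a $\ZZ^d$-graded prime filtration $0=M_0\subset M_1\subset \cdots\subset M_k=M$ whose successive quotients have the form $S_{F_i}(-\alpha_i)$.

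Next, I would verify that both invariants satisfy a ``maximum'' law for short exact sequences. For any graded short exact sequence $0\to M'\to M\to M''\to 0$, stalkwise exactness gives $\Supp{M}=\Supp{M'}\cup\Supp{M''}$, while exactness in each graded piece gives $\tdeg{M}=\tdeg{M'}\cup\tdeg{M''}$; taking Zariski closures (which commute with finite unions) yields $\qdeg{M}=\qdeg{M'}\cup\qdeg{M''}$. Since the dimension of a finite union equals the maximum of the dimensions of its pieces, induction along the filtration reduces the lemma to the equality $\dim\Supp{S_F(-\alpha)}=\dim\qdeg{S_F(-\alpha)}$ for each cyclic quotient.

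For such a piece, both dimensions equal $d_F$. Indeed, the support of $S_F(-\alpha)$ in $X_A$ is $V(\mathfrak{p}_F)=\closure{\orbit(F)}$, a torus orbit closure of dimension $d_F$. On the other hand, the true degrees of $S_F(-\alpha)$ are $\alpha+\NN{F}$; since $\NN{F}$ generates $\ZZ{F}$ as a group, it spans $\CC{F}$ as a $\CC$-vector space and is Zariski dense there, so the Zariski closure of $\alpha+\NN{F}$ in $\CC^d$ is the affine subspace $\alpha+\CC{F}$, which also has dimension $d_F$.

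The step that requires the most care is really just the setup: the classification of $\ZZ^d$-graded primes of $S_A$ by faces and the existence of a $\ZZ^d$-graded prime filtration. Once these standard toric/commutative-algebra facts are cited, the content of the lemma is simply the observation that for each face $F$, both the orbit closure $\closure{\orbit(F)}\subseteq X_A$ and the affine subspace $\CC{F}\subseteq\CC^d$ have the same dimension $d_F$.
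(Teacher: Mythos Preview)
Your proposal is correct and follows essentially the same approach as the paper: both reduce via a toric (graded prime) filtration to the cyclic pieces $S_F(-\alpha)$ and then observe that $\qdeg S_F(-\alpha)=\alpha+\CC F$ and $\Supp S_F(-\alpha)$ both have dimension $d_F$. Your argument is in fact slightly more explicit than the paper's in justifying that $\qdeg$ (like $\Supp$) is additive on short exact sequences, which the paper uses implicitly.
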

\begin{proof}
	Choose a filtration $0=M_0\subseteq \cdots \subseteq M_s=M$ of $M$ by graded submodules such that each $M_i/M_{i-1}$ is of the form $S_F(-\alpha)$ for some face $F\preceq A$ and some $\alpha\in \ZZ^d$ (in the terminology of \cite[Definition~4.5]{MMW05}, $\{M_i\}$ is a \emph{toric filtration}). Then
	\[ \Supp{M} = \bigcup_{i=1}^s \Supp{M_i/M_{i-1}}.\]
	So, $\dim\Supp{M}$ is equal to the maximum of the dimensions $\dim\Supp(M_i/M_{i-1})$.
	
	On the other hand, each of the sets $\qdeg(M_i/M_{i-1})$ is a translate of the span of one of the finitely many faces of $\NN{A}$. So, the dimension of $\qdeg{M}$ is equal to the maximum of the dimensions $\dim\qdeg(M_i/M_{i-1})$.
	
	Thus, we are reduced to the case $M=S_F(-\alpha)$ for some $F\preceq A$, $\alpha\in \ZZ^d$. Then $\qdeg(S_F(-\alpha))= \CC{F}+\alpha$ and $\Supp(S_F(-\alpha))=V(I_F^A)$. Since both of these have dimension $d_F$, we arrive at the result.
\end{proof}

\begin{lemma}\label{lem:qdeg-loc}
	Let $M$ be a finitely generated graded $S_A$-module, $F\preceq G\preceq A$. Then a subset $Z\subseteq \qdeg M[\partial^{-G}]$ is an irreducible component of $\qdeg M[\partial^{-G}]$ if and only if it is an irreducible component of $\qdeg M[\partial^{-F}]$. In particular,
	\[ \qdeg{M[\partial^{-G}]}\subseteq \qdeg M[\partial^{-F}].\]
\end{lemma}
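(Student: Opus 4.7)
My plan is to use a toric filtration of $M$ to compute both quasidegree sets as explicit finite unions of affine subspaces, and then to argue the irreducible-component statement combinatorially using a face-theoretic fact.

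First I would choose a toric filtration $0 = M_0 \subsetneq M_1 \subsetneq \cdots \subsetneq M_s = M$ as in \cite[Def.~4.5 and Lem.~4.7]{MMW05}, so that each quotient $M_i/M_{i-1} \cong S_{H_i}(-\alpha_i)$ for some face $H_i \preceq A$ and shift $\alpha_i \in \ZZ^d$. Monomial localization is exact, and $\tdeg$ (hence $\qdeg$) of the middle term of a short exact sequence is the union of the $\tdeg$s (resp.\ $\qdeg$s) of the ends. Moreover $S_{H_i}(-\alpha_i)[\partial^{-F}] = 0$ when $F \not\preceq H_i$ (since some $\partial_j$ being inverted is itself zero in $S_{H_i}$), and has $\qdeg = \CC H_i + \alpha_i$ when $F \preceq H_i$ (by the standard density of $\NN H_i$ in $\CC H_i$, proved by restricting to a $\CC$-basis among the generators). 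Combining these yields
\[
\qdeg M[\partial^{-F}] = \bigcup_{i\,:\,F \preceq H_i} (\CC H_i + \alpha_i), \qquad \qdeg M[\partial^{-G}] = \bigcup_{i\,:\,G \preceq H_i} (\CC H_i + \alpha_i).
\]
The ``in particular'' inclusion is immediate, as $F \preceq G$ shrinks the indexing condition.

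The irreducible components of either side are precisely the maximal (under inclusion) affine subspaces $\CC H_i + \alpha_i$ appearing in the respective unions. The reverse direction of the biconditional is then straightforward: if $Z$ is a component of the larger set $\qdeg M[\partial^{-F}]$ and $Z \subseteq \qdeg M[\partial^{-G}]$, let $Z'$ be a component of $\qdeg M[\partial^{-G}]$ containing $Z$; then $Z' \subseteq \qdeg M[\partial^{-F}]$ forces $Z = Z'$ by the maximality of $Z$ in the larger set.

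The forward direction is the main obstacle. Assume $Z = \CC H_i + \alpha_i$ (with $G \preceq H_i$) is maximal in the $G$-family, and suppose for contradiction $Z \subsetneq \CC H_j + \alpha_j$ is a strictly larger component of the $F$-family (so $F \preceq H_j$). Then $\CC H_i \subseteq \CC H_j$, so the columns of $G$ (which are real vectors lying in $\RR_{\geq 0} H_i \subseteq \CC H_i$) lie in $\CC H_j$ and hence in $\RR H_j$. They are also columns of $A$, so they lie in $\RR_{\geq 0} A$, and thus in $\RR H_j \cap \RR_{\geq 0} A = \RR_{\geq 0} H_j$ (the equality being the standard characterization of a face of a polyhedral cone as the intersection with its own linear span). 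Hence the columns of $G$ are columns of $H_j$, so $G \preceq H_j$, contradicting the assumed maximality of $Z$ in the $G$-family. The crux is precisely this face-theoretic step converting complex-span inclusion into face containment.
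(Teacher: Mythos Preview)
Your proof is correct and follows essentially the same route as the paper: both take a toric filtration, compute $\qdeg M[\partial^{-H}]$ as $\bigcup_{H_i\succeq H}(\CC H_i+\alpha_i)$, and then argue combinatorially on the resulting finite family of affine subspaces. The paper packages the combinatorics as the statement that $\mathcal{Z}_H\coloneqq\{\CC H_i+\alpha_i\mid H_i\succeq H\}$ is an upper subset of $\mathcal{Z}_\emptyset$ (hence $\mathcal{Z}_G$ is upper in $\mathcal{Z}_F$, so maximal elements coincide), whereas you treat the two implications separately; in particular you spell out the convex-geometry step ($\CC H_i\subseteq\CC H_j\Rightarrow$ the columns of $G$ lie in $\RR H_j\cap\RR_{\geq0}A=\RR_{\geq0}H_j$) that the paper simply asserts as ``$F_j\succeq F_i$''.
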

\begin{proof}
	Let $H$ be any face of $A$. Choose a filtration 
	\[ 0=M_0\subseteq \cdots \subseteq M_r = M\]
	of $M$ as in \Cref{lem:dim-qdeg}. Write $M_i/M_{i-1}\cong S_{F_i}(-\alpha_i)$. Then $\{M_i[\partial^{-F}]\}$ is a filtration of $M[\partial^{-H}]$, and its $i$th factor module is isomorphic to $S_{F_i}[\partial^{-F}](-\alpha_i)$, which is non-zero if and only if $F_i\succeq H$. Therefore,
	\begin{align*}
		\qdeg M[\partial^{-H}] 
		&= \bigcup_{i=1}^r \qdeg\bigl((M_i/M_{i-1})[\partial^{-H}]\bigr)\\
		&= \bigcup_{i=1}^r \qdeg\bigl(S_{F_i}[\partial^{-H}](-\alpha_i)\bigr)\\
		&= \bigcup_{F_i\succeq H} \qdeg\bigl(S_{F_i}[\partial^{-H}](-\alpha_i)\bigr)\\
		&= \bigcup_{F_i\succeq H} \left[\qdeg\bigl(S_{F_i}[\partial^{-H}]\bigr) + \alpha_i\right]
	\end{align*}
	If $F_i\succeq H$, then every finitely generated submodule of $S_{F_i}[\partial^{-H}]$ is contained in $S_{F_i}\partial^{-kH}$ for some $k$, and $\qdeg(S_{F_i}\partial^{-kH}) = \CC F_i + k\deg{\partial^H} = \CC F_i$. Therefore, $\qdeg S_{F_i}[\partial^{-H}] = \CC F_i$. Hence,
	\begin{equation}\label{eq:locH}
		\qdeg M[\partial^{-H}] = \bigcup_{F_i\succeq H} (\CC{F_i} + \alpha_i).
	\end{equation}
	
	Set $\mathcal{Z}_H \coloneqq \Set{ \CC F_i + \alpha_i | F_i\succeq H}$. Each $\CC F_i + \alpha_i$ is irreducible, so by \eqref{eq:locH}, the irreducible components of $\qdeg M[\partial^{-H}]$ are exactly the maximal elements of $\mathcal{Z}_H$. We show that $\mathcal{Z}_H$ is an upper subset of $\mathcal{Z}\coloneqq \mathcal{Z}_\emptyset$ (recall that a subset $Y$ of an ordered set $(X,\leq)$ is \emph{upper} if for all $y\in Y$, we have $\set{x\in X|y\leq x}\subseteq Y$). It follows that $\mathcal{Z}_G$ is an upper subset of $\mathcal{Z}_F$, and therefore that an element of $\mathcal{Z}_G$ is maximal in $\mathcal{Z}_G$ if and only if it is maximal in $\mathcal{Z}_F$, proving the lemma.
	
	Let $\CC F_i + \alpha_i \in \mathcal{Z}_H$, and suppose $\CC F_j + \alpha_j$ contains $\CC F_i + \alpha _i$. Then $F_j\succeq F_i\succeq H$, so $\CC F_j + \alpha_j\in \mathcal{Z}_H$. Thus, $\mathcal{Z}_H$ is an upper subset of $\mathcal{Z}$, as claimed.
\end{proof}

\begin{remark}
	The proofs of \Cref{lem:dim-qdeg,lem:qdeg-loc} work also for toric $R_A$-modules as defined in \cite[Definition~4.5]{MMW05}. With minor adjustments, they can even be made to work for weakly toric modules (see \cite[Definition~5.1]{SW09}).
\end{remark}

\section{The Holonomic Dual of Euler--Koszul Complexes}
The following theorem, \Cref{th:ek-dual}, will be used in \Cref{cor:phi-dagger} to give a first description of $\varphi_\dagger\mathcal{O}_{T_A}^\beta$, and then in the next section to describe $\FL(\varphi_\dagger\mathcal{O}_{T_A}^\beta)$ as an $A$-hypergeometric system.

\medskip
Before stating the theorem, we need a definition. Also recall from \eqref{eq:uHom} that $\uHom_{R_A}(M,N)$ denotes the graded $R_A$-module whose degree-$\alpha$ component is the vector space of $R_A$-module homomorphisms from $M$ to $N$ of degree $\alpha$.

\begin{definition}
	Given a bounded complex $M^\bullet$ of finitely generated graded $R_A$-modules, define
	\[ \Rdual{M^\bullet} \coloneqq \RuHom_{R_A}(M^\bullet, \omega_{R_A})[n-d],\]
	where the shift is cohomological and $\omega_{R_A}\coloneqq R_A(-\sum_i \vec{a}_i)$.
\end{definition}

\Cref{th:ek-dual} below is proved in essentially the same way as is \cite[Theorem~6.3]{MMW05}---no problems occur translating from statements about modules and spectral sequences to statements in the derived category. We therefore omit the proof of \Cref{th:ek-dual}. Note that the reason that \Cref{th:ek-dual} does not need the auto-equivalence $N\mapsto N^{-}$ as does \cite[Theorem~6.3]{MMW05} is that we work with the inverse-Fourier--Laplace-transformed Euler--Koszul complex, whereas \cite{MMW05} works with the Euler--Koszul complex itself.

Recall from \eqref{eq:epsA} that $\varepsilon_A\coloneqq \vec{a}_1+\cdots +\vec{a}_n$.

\begin{theorem}\label{th:ek-dual}
	Let $M^\bullet$ be a bounded complex of finitely-generated graded $R_A$-modules, $\beta\in \CC^d$. Then 
	\[ \Ddual \hat{K}_\bullet^A(M^\bullet; E_A - \beta) \cong \hat{K}_\bullet^A(\Rdual{M^\bullet}; E_A +\beta).\]
	If the $\ZZ^d$-grading is taken into account, the right-hand side must be twisted by $-\varepsilon_A$. \qed
\end{theorem}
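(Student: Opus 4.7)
The plan is to mirror the proof of \cite[Theorem~6.3]{MMW05}, as the authors suggest, but formulated directly in the derived category of $D_{\widehat{\CC^n}}$-modules so as to bypass the spectral sequences used there. Both functors $M^\bullet\mapsto \Ddual\hat{K}^A_\bullet(M^\bullet;E_A-\beta)$ and $M^\bullet\mapsto \hat{K}^A_\bullet(\Rdual M^\bullet;E_A+\beta)$ are triangulated in $M^\bullet$: the left-hand side composes the exact functor $\Ddual$ with $\hat{K}^A_\bullet(-;E_A-\beta)$, which is exact because it is the Koszul complex of commuting endomorphisms on the flat tensor product $D_A\otimes_{R_A}(-)$; the right-hand side composes the derived functor $\Rdual$ with the exact functor $\hat{K}^A_\bullet(-;E_A+\beta)$. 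A standard truncation/d\'evissage argument on $M^\bullet$ therefore reduces the claim to the case in which $M^\bullet$ is a single finitely generated graded $R_A$-module $M$ placed in cohomological degree zero.

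Since $R_A$ has finite global dimension, $M$ admits a bounded *-free resolution $P^\bullet\twoheadrightarrow M$ with $P^i=\bigoplus_j R_A(-\alpha_{ij})$. Flatness of $D_A$ over $R_A$ ensures that $\hat{K}^A_\bullet(P^\bullet;E_A-\beta)\to \hat{K}^A_\bullet(M;E_A-\beta)$ is a quasi-isomorphism of complexes of $D_{\widehat{\CC^n}}$-modules, while $\Rdual M$ is computed by $\uHom_{R_A}(P^\bullet,\omega_{R_A})[n-d]$, again a bounded complex of graded free $R_A$-modules. This reduces the theorem to the case $M=R_A(-\alpha)$, where, using \eqref{eq:ek-deg-shift} together with $\Rdual R_A(-\alpha)\cong R_A(\alpha-\varepsilon_A)[n-d]$, both sides become (up to explicit twists and the cohomological shift by $n-d$) Koszul complexes on $D_{\widehat{\CC^n}}$ for the commuting endomorphisms $(E_A\mp\beta)\circ$ acting on $D_{\widehat{\CC^n}}$ itself.

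The heart of the argument is then the following comparison on a free $D$-module: the holonomic dual of a Koszul complex on $d$ pairwise-commuting $D$-linear endomorphisms $\phi_1,\ldots,\phi_d$ is, up to a shift by $d$, the Koszul complex on the negatives $-\phi_1,\ldots,-\phi_d$. After applying the inverse Fourier--Laplace substitution $x_i\mapsto -\partial_{\partial_i}$ and the commutation relation $[\partial_{\partial_j},\partial_j]=1$, the operator $(E_i-\beta_i)\circ=\sum_j a_{ij}x_j\partial_j-\beta_i$ is rewritten in a form whose negation matches $(E_i+\beta_i)\circ$ after a grading shift by $\varepsilon_A$. Combining this with the twist by $\omega_{R_A}=R_A(-\varepsilon_A)$ appearing in the definition of $\Rdual$ produces the claimed isomorphism, including the $-\varepsilon_A$ correction in the $\ZZ^d$-graded refinement.

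The principal obstacle I anticipate is the careful bookkeeping of signs, cohomological shifts, and $\ZZ^d$-grading twists. In \cite{MMW05} this bookkeeping is encoded by the auto-equivalence $N\mapsto N^-$ reversing the $\ZZ^d$-grading; here, because we are working after inverse Fourier--Laplace transform, that operation must instead be absorbed into the replacement of $E_A-\beta$ by $E_A+\beta$ together with the single twist by $-\varepsilon_A$. Verifying this absorption cleanly---tracking how the Euler--Koszul differential on each summand $R_A(-\alpha)$ of a *-free resolution interacts with its internal degree shift by $\alpha$, and cross-checking against the cohomological shift $[n-d]$ built into $\Rdual$---is the technical core of the proof.
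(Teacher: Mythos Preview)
Your proposal is correct and follows essentially the same route as the paper: the paper explicitly omits the proof, remarking that it is obtained from \cite[Theorem~6.3]{MMW05} by translating the module-and-spectral-sequence argument into the derived category and absorbing the auto-equivalence $N\mapsto N^-$ into the inverse Fourier--Laplace transform, and that is precisely the outline you give (d\'evissage to a single module, *-free resolution, reduction to $R_A(-\alpha)$, and the Koszul self-duality computation with the $\varepsilon_A$-twist coming from $\omega_{R_A}$).
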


\begin{definition}
	Define
	\begin{equation}
		\omega_{S_A}^\bullet \coloneqq \bigoplus_{F\preceq A}\CC\{\NN{F}-\NN{A}\},
	\end{equation}
	where the summand $\CC\{\NN{F}-\NN{A}\}$ sits in cohomological degree $d_{A/F}$, and the coboundary maps are the natural projections with signs chosen appropriately (for details, see \cite[Def.~12.7]{ms05} or \cite[\S2]{ishida}). This is a complex of *-injective modules (see \S\ref{subsubseq:*-inj}).
\end{definition}

By \cite[Theorem~3.2]{ishida}, $\omega_{S_A}^\bullet$ is a dualizing complex in the ungraded category; the arguments there show that $\omega_{S_A}^\bullet$ is also a dualizing complex in the $\ZZ^d$-graded category. With minor changes to its proof, \cite[Theorem V.3.1]{RD} implies that $\omega_{S_A}^\bullet$ is unique (in the $\ZZ^d$-graded derived category) up to cohomological shift. Its cohomological degrees are chosen such that $\uHom_{S_A}(\CC, \omega_{S_A}^\bullet)$ is quasi-isomorphic to the complex $\CC[-d]$; this choice implies that 
\begin{equation}\label{eq:DR-vs-DS}
	\Rdual(-) \cong \uHom_{S_A}(-,\omega_{S_A}^\bullet)
\end{equation}
and
\begin{equation}\label{eq:om-vs-DS}
	\omega_{S_A}^\bullet \cong \Rdual(S_A)
\end{equation}
in the derived category of graded $S_A$-modules.

\begin{remark}\label{rmk:face-LC-om}
	Let $F\preceq A$ be a face. Since $\omega_{S_A}^\bullet$ is a complex of *-injective modules, $\RGamma_{\orbit(F)}(\omega_{S_A}^\bullet)\cong \Gamma_{\orbit(F)}(\omega_{S_A}^\bullet)$. By \Cref{ex:orbitLC}, we have
	\begin{equation*}
		\Gamma_{\orbit(F)}(\omega_{S_A}^\bullet) \cong \Gamma_{\braket{\partial_i|\vec{a}_i\notin F}}(R_A[\partial^{-F}]\otimes_{R_A} \omega_{S_A}^\bullet).
	\end{equation*}
	If a face $F'\preceq A$ does not contain $F$, then $\CC\{\NN{F}-\NN{A}\}$ is $\partial^F$-torsion and therefore vanishes upon tensoring with $R_A[\partial^{-F}]\otimes_{R_A}$. Then because $\braket{\partial_i|\vec{a}_i\notin F}S_A$ is the homogeneous prime ideal corresponding to $F$, the only module $\CC\{\NN{F'}-\NN{A}\}$ with $F'\succeq F$ which is not killed by $\Gamma_{\braket{\partial_i|\vec{a}_i\notin F}}$ is $\CC\{\NN{F}-\NN{A}\}$. Hence,
	\begin{equation}\label{eq:face-LC-om}
		\RGamma_{\orbit(F)}(\omega_{S_A}^\bullet) \cong \CC\{\NN{F}-\NN{A}\}[-d_{A/F}].
	\end{equation}
\end{remark}

Recall that $\sRes(A)$ was defined in \eqref{eq:sres}.

\begin{corollary}\label{cor:phi-dagger}
	If $-\beta\in \CC^d\setminus \sRes(A)$, then $\varphi_\dagger \mathcal{O}_{T_A}^\beta \cong \hat{K}_\bullet^A(\omega_{S_A}^\bullet; E_A-\beta)$.
\end{corollary}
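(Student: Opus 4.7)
The strategy is to reduce to the direct image case handled by Schulze--Walther, by peeling off one $\Ddual$ at a time. Recall that by definition $\varphi_\dagger \mathcal{O}_{T_A}^\beta = \Ddual\, \varphi_+ \Ddual\, \mathcal{O}_{T_A}^\beta$. My first move is to identify $\Ddual \mathcal{O}_{T_A}^\beta$ with $\mathcal{O}_{T_A}^{-\beta}$. Since $\mathcal{O}_{T_A}^\beta$ is a rank-one integrable connection on the torus, its holonomic dual is again such a connection, and a direct computation with the $\vartheta_u$-eigenvalues from \eqref{eq:Gauss-Manin} shows that the eigenvalue $-\braket{\beta,u}$ gets flipped to $+\braket{\beta,u}$. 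Hence $\varphi_\dagger \mathcal{O}_{T_A}^\beta \cong \Ddual\, \varphi_+\, \mathcal{O}_{T_A}^{-\beta}$.

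Next I use the hypothesis $-\beta \notin \sRes(A)$, which is exactly the condition under which Schulze--Walther's \cite[Corollary~3.7]{SW09} applies to the connection $\mathcal{O}_{T_A}^{-\beta}$. That result gives a quasi-isomorphism $\varphi_+\, \mathcal{O}_{T_A}^{-\beta} \cong \hat{K}^A_\bullet(S_A;\, E_A + \beta)$ in the bounded derived category (the higher Euler--Koszul homologies vanish because $-\beta \notin \sRes(A)$ forces non-rank-jumping, so the complex is concentrated in homological degree zero and agrees with $\hat{M}_A(-\beta)$). I then apply Theorem~\ref{th:ek-dual} to dualize this Euler--Koszul complex:
\[
\Ddual\, \hat{K}^A_\bullet(S_A;\, E_A + \beta) \cong \hat{K}^A_\bullet\bigl(\Rdual S_A;\, E_A - \beta\bigr),
\]
and by the identification $\omega_{S_A}^\bullet \cong \Rdual S_A$ from \eqref{eq:om-vs-DS}, the right-hand side is exactly $\hat{K}^A_\bullet(\omega_{S_A}^\bullet;\, E_A - \beta)$. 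Stringing these three isomorphisms together yields the claim.

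The only real subtlety is that I need the Schulze--Walther isomorphism at the level of complexes in $\DD^b(D_{\widehat{\CC^n}})$, not merely at the level of $H^0$, since afterwards I apply the derived functor $\Ddual$ and Theorem~\ref{th:ek-dual}. This is why I must explicitly invoke the vanishing of higher Euler--Koszul homologies of $S_A$ in non-strongly-resonant degrees; this is part of what the hypothesis $-\beta \notin \sRes(A)$ buys, and the rest of the argument is formal manipulation of the definition of $\varphi_\dagger$.
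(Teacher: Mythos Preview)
Your proof is correct and follows exactly the same route as the paper's: compute $\Ddual\mathcal{O}_{T_A}^\beta\cong\mathcal{O}_{T_A}^{-\beta}$, apply \cite[Corollary~3.7]{SW09} to get $\varphi_+\mathcal{O}_{T_A}^{-\beta}\cong\hat{K}^A_\bullet(S_A;E_A+\beta)$, then apply \Cref{th:ek-dual} and \eqref{eq:om-vs-DS}. One small remark on your parenthetical: the complex-level isomorphism $\varphi_+\mathcal{O}_{T_A}^{-\beta}\cong\hat{K}^A_\bullet(S_A;E_A+\beta)$ is what \cite{SW09} actually proves directly (via the quasi-isomorphism $\hat{K}^A_\bullet(S_A;E_A+\beta)\to\hat{K}^A_\bullet(S_A[\partial^{-A}];E_A+\beta)$ for non-strongly-resonant $-\beta$, combined with \cite[Prop.~2.1]{SW09}), so you need not route through the separate claim that $-\beta\notin\sRes(A)$ implies $-\beta$ is not rank-jumping.
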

\begin{proof}
	The holonomic dual of $\mathcal{O}_{T_A}^\beta$ is $\mathcal{O}_{T_A}^{-\beta}$, and by \cite[Corollary~3.7]{SW09}, applying $\varphi_+$ to this gives $\hat{K}^A_\bullet(S_A; E_A+\beta)$. So, by \Cref{th:ek-dual}, 
	\[\varphi_\dagger \mathcal{O}_{T_A}^\beta\cong \Ddual \varphi_+\Ddual \mathcal{O}_{T_A}^\beta\cong  \hat{K}^A_\bullet(\Rdual{S_A};E_A-\beta).\]
	Now use \eqref{eq:om-vs-DS}.
\end{proof}

\section{The Exceptional Direct Image of $\mathcal{O}_{T_A}^\beta$}
Reichelt proves in \cite[Proposition~1.14]{Rei14} that $\FL(\varphi_\dagger\mathcal{O}_{T_A}^\beta)$ is isomorphic to a GKZ system for homogeneous $A$ and $\beta\in \QQ^d$. We now generalize this to arbitrary $A$, $\beta$. This generalization, or rather \Cref{prop:phi-dagger}, will be used later in the proof of \Cref{th:plus-dag}.

\begin{lemma}\label{lem:omega-qdeg}
	For all $i\in \NN$, $\dim\qdeg H^i(\omega_{S_A}^\bullet)\leq d-i$.
\end{lemma}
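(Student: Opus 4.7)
The plan is to translate the statement into a bound on the support dimension of an Ext module, and then to invoke \Cref{lem:dim-qdeg} to pass from support dimension to quasidegree dimension.

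First I would use the isomorphism $\omega_{S_A}^\bullet \cong \Rdual(S_A)$ from \eqref{eq:om-vs-DS} together with the definition of $\Rdual$ to rewrite
\[ H^i(\omega_{S_A}^\bullet) \cong \underline{\mathrm{Ext}}^{i+n-d}_{R_A}(S_A, \omega_{R_A}) \]
as a $\ZZ^d$-graded $S_A$-module. Since $S_A$ is cyclic over $R_A$, these Ext modules are computable from a finite graded free resolution of $S_A$ by finitely generated $R_A$-modules, hence are themselves finitely generated as graded $S_A$-modules. This finite generation step is the one place where some care is required: the individual summands $\CC\{\NN F - \NN A\}$ appearing in Ishida's complex are far from finitely generated over $S_A$, so the finite generation must come entirely from the identification with Ext (equivalently, from $\omega_{S_A}^\bullet$ being a bounded complex of *-injectives representing $\Rdual(S_A)$).

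Next I would invoke the standard commutative-algebra bound: for a finitely generated module $M$ over the regular ring $R_A$ of Krull dimension $n$,
\[ \dim \mathrm{Ext}^j_{R_A}(M, R_A) \leq n - j. \]
The quick justification is that localizing at any prime $\mathfrak{p}$ of height strictly less than $j$ makes $(R_A)_\mathfrak{p}$ a regular local ring of global dimension less than $j$, which forces $\mathrm{Ext}^j_{(R_A)_\mathfrak{p}}(M_\mathfrak{p},(R_A)_\mathfrak{p}) = 0$; hence the support of $\mathrm{Ext}^j$ lies in primes of height $\geq j$, i.e.\ of dimension $\leq n - j$. Specializing to $M = S_A$ and $j = i + n - d$ yields
\[ \dim \Supp H^i(\omega_{S_A}^\bullet) \leq n - (i+n-d) = d - i. \]

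Finally, because $H^i(\omega_{S_A}^\bullet)$ is a finitely generated graded $S_A$-module, \Cref{lem:dim-qdeg} gives
\[ \dim \qdeg H^i(\omega_{S_A}^\bullet) = \dim \Supp H^i(\omega_{S_A}^\bullet) \leq d - i, \]
which is the claimed inequality. I do not anticipate any genuine obstacle beyond the finite-generation verification discussed above; the remainder is a matter of chaining together the definition of $\Rdual$, a well-known dimension bound on Ext over a regular ring, and the already-proven \Cref{lem:dim-qdeg}.
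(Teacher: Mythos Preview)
Your argument is correct. Both approaches end by invoking \Cref{lem:dim-qdeg}, but the route to the support bound differs. The paper works directly with the explicit Ishida complex: the term $\omega_{S_A}^i$ is by construction a direct sum of the modules $\CC\{\NN F-\NN A\}$ over faces $F$ with $d_{A/F}=i$, each of which is supported on $V(I_F^A)$ of dimension $d_F=d-i$; since $H^i(\omega_{S_A}^\bullet)$ is a subquotient of $\omega_{S_A}^i$, its support inherits the same bound. Your proof instead passes through the isomorphism $\omega_{S_A}^\bullet\cong\Rdual(S_A)$, identifies $H^i(\omega_{S_A}^\bullet)$ with $\underline{\mathrm{Ext}}^{\,i+n-d}_{R_A}(S_A,\omega_{R_A})$, and then appeals to the general inequality $\dim\mathrm{Ext}^j_{R_A}(M,R_A)\leq n-j$ over the regular ring $R_A$. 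The paper's approach is more self-contained (it needs nothing beyond the definition of $\omega_{S_A}^\bullet$), while yours is more conceptual and has the advantage of making the finite-generation of $H^i(\omega_{S_A}^\bullet)$ over $S_A$ explicit---a point the paper uses when applying \Cref{lem:dim-qdeg} but leaves implicit.
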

\begin{proof}
	By definition, $\omega_{S_A}^i$ is the direct sum of $\CC\{\NN{F}-\NN{A}\}$ for faces $F$ with $d_{A/F}=i$. Each $\CC\{\NN{F}-\NN{A}\}$ has support equal to $V(I_F^A)$, which has dimension $d_F=d-i$. So, $\dim\Supp\omega_{S_A}^i = d-i$. Hence, because $H^i(\omega_{S_A}^\bullet)$ is a subquotient of $\omega_{S_A}^i$, its support must have dimension at most $d-i$. Now apply \Cref{lem:dim-qdeg}.
\end{proof}

\begin{proposition}\label{prop:phi-dagger}
	Let $\beta\in \CC^d$. Then for all ${k\gg0}$,
	\[\varphi_\dagger\mathcal{O}_{T_A}^\beta \cong \hat{K}^A_\bullet(S_A; E_A-\beta + k\varepsilon_A).\]
\end{proposition}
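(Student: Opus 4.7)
The plan is to reduce the statement to \Cref{cor:phi-dagger} via a shift of the parameter, and then to simplify the resulting Euler--Koszul complex of $\omega_{S_A}^\bullet$ down to one of $S_A$. Since $\varepsilon_A\in \ZZ A=\ZZ^d$, multiplication by $t^{k\varepsilon_A}$ defines a $D_{T_A}$-module isomorphism $\mathcal{O}_{T_A}^\beta\cong\mathcal{O}_{T_A}^{\beta-k\varepsilon_A}$ for every $k\in\ZZ$, so $\varphi_\dagger\mathcal{O}_{T_A}^\beta\cong\varphi_\dagger\mathcal{O}_{T_A}^{\beta-k\varepsilon_A}$. The task is therefore to show that for $k\gg 0$ the shifted parameter $-\beta+k\varepsilon_A$ lies outside $\sRes(A)$, and then that the resulting Euler--Koszul complex of $\omega_{S_A}^\bullet$ becomes quasi-isomorphic to that of $S_A$.

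The key geometric observation powering both tasks is that $\varepsilon_A\notin \CC F$ for every proper face $F\prec A$: if $\ell$ is a linear functional nonnegative on $\NN A$ and vanishing exactly on $F$, then $\ell(\varepsilon_A)=\sum_{\vec a_i\notin F}\ell(\vec a_i)>0$, so $\varepsilon_A\notin\ker\ell\supseteq\CC F$. Consequently, the ray $\{-\beta+k\varepsilon_A:k\in\NN\}$ meets any translate of such a $\CC F$ in at most one point, and a little care with the degrees coming from $H^1_{\braket{t^{\vec a_j}}}(S_A)$ (whose irreducible quasidegree components are translates $\alpha+\CC F$ with $\vec a_j\notin F$, as follows from \Cref{lem:torsion-qdeg} applied to the \v{C}ech presentation) shows that the collection of bad $k$-values is bounded above. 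Thus $-\beta+k\varepsilon_A\notin\sRes(A)$ for $k\gg 0$, and \Cref{cor:phi-dagger} gives
\[ \varphi_\dagger\mathcal{O}_{T_A}^\beta\cong\hat K^A_\bullet(\omega_{S_A}^\bullet;E_A-\beta+k\varepsilon_A). \]

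To replace $\omega_{S_A}^\bullet$ by $S_A$, apply $\hat K^A_\bullet(-;E_A-\beta+k\varepsilon_A)$ termwise to $\omega_{S_A}^\bullet$ and consider the resulting bicomplex's spectral sequence
\[ E_1^{p,q}=\bigoplus_{\substack{F\preceq A\\d_{A/F}=p}}\hat H^A_{-q}\bigl(\CC\{\NN F-\NN A\};E_A-\beta+k\varepsilon_A\bigr)\Rightarrow \hat H^A_{-p-q}(\omega_{S_A}^\bullet;E_A-\beta+k\varepsilon_A). \]
For $p\geq 1$ every face appearing is proper, so $\dim\qdeg\CC\{\NN F-\NN A\}=d_F<d$ by \Cref{lem:dim-qdeg}. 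The Euler--Koszul vanishing criterion of \cite[Prop.~5.3]{MMW05} (extended to weakly toric modules) together with the $\varepsilon_A\notin\CC F$ observation forces each such Euler--Koszul complex to be acyclic for $k\gg 0$; hence $E_1^{p,q}=0$ for $p\geq 1$, the spectral sequence collapses to its $p=0$ column, and we obtain $\hat K^A_\bullet(\omega_{S_A}^\bullet;E_A-\beta+k\varepsilon_A)\cong\hat K^A_\bullet(\CC[\ZZ^d];E_A-\beta+k\varepsilon_A)$. Finally, the quotient $\CC[\ZZ^d]/S_A$ is supported on the toric boundary $X_A\setminus T_A$, which has dimension $d-1$, so one more application of the same vanishing argument (via the short exact sequence $0\to S_A\to\CC[\ZZ^d]\to\CC[\ZZ^d]/S_A\to 0$) yields $\hat K^A_\bullet(\CC[\ZZ^d];E_A-\beta+k\varepsilon_A)\cong\hat K^A_\bullet(S_A;E_A-\beta+k\varepsilon_A)$. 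The main obstacle throughout is the uniform control over the various exceptional sets $\mathcal{E}_A(M)$ attached to the lower-dimensional modules appearing in $\omega_{S_A}^\bullet$; this is exactly what the elementary but essential observation $\varepsilon_A\notin\CC F$ for proper $F$ delivers.
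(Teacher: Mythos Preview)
Your overall strategy---shifting the parameter to invoke \Cref{cor:phi-dagger} and then trying to replace $\omega_{S_A}^\bullet$ by $S_A$---matches the paper's, and your treatment of the $\sRes(A)$ avoidance is fine.

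The gap is in your termwise spectral sequence. You invoke \Cref{lem:dim-qdeg} to claim $\dim\qdeg\CC\{\NN F-\NN A\}=d_F$, but that lemma requires finite generation, and the $*$-injective module $\CC\{\NN F-\NN A\}$ is not finitely generated over $S_A$. Its quasidegree set is an \emph{infinite} union of translates of $\CC F$ (cf.\ \Cref{lem:torsion-qdeg}); in particular $\tdeg\CC\{\NN F-\NN A\}=\NN F-\NN A\supseteq-\NN A$, so for $\beta\in\ZZ^d$ one has $\beta-k\varepsilon_A\in-\NN A\subseteq\qdeg\CC\{\NN F-\NN A\}$ for all $k\gg0$. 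Thus the Euler--Koszul complex of each term with $p\geq1$ does \emph{not} become acyclic as $k$ grows, and the $E_1$ page never collapses to the $p=0$ column. (Conceptually: if it did collapse, you would get $\varphi_\dagger\mathcal{O}_{T_A}^{\beta-k\varepsilon_A}\cong\hat K^A_\bullet(\CC[\ZZ^d];E_A-\beta+k\varepsilon_A)\cong\varphi_+\mathcal{O}_{T_A}^{\beta-k\varepsilon_A}$ for all $k\gg0$, which is false in general.) The same defect recurs in your final step: $\CC[\ZZ^d]/S_A$ is again not finitely generated, and $\beta-k\varepsilon_A\in\ZZ^d\setminus\NN A=\tdeg(\CC[\ZZ^d]/S_A)$ for all large $k$.

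The paper evades this by working with the finitely generated \emph{cohomology} modules of $\omega_{S_A}^\bullet$ rather than its injective terms. Using \Cref{lem:omega-qdeg}, the set $\bigcup_{i>0}\qdeg H^i(\omega_{S_A}^\bullet)$ is a finite union of proper affine subspaces, so $\beta-k\varepsilon_A$ avoids it for $k\gg0$; this yields $\hat K^A_\bullet(\omega_{S_A}^\bullet;E_A-\beta+k\varepsilon_A)\cong\hat K^A_\bullet(H^0(\omega_{S_A}^\bullet);E_A-\beta+k\varepsilon_A)$. Then a homogeneous element $m\in H^0(\omega_{S_A}^\bullet)$ of degree $k_0\varepsilon_A$ is chosen, and the finitely generated cokernel $H^0(\omega_{S_A}^\bullet)/S_Am$ again has codimension-one quasidegree set, so multiplication by $m$ induces the desired quasi-isomorphism with $\hat K^A_\bullet(S_A;E_A-\beta+k\varepsilon_A)$. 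Your argument can be salvaged by passing to the \emph{other} spectral sequence of the same bicomplex, whose $E_2$ page involves $\hat H^A_{-q}(H^p(\omega_{S_A}^\bullet);E_A-\beta+k\varepsilon_A)$; that route is essentially the paper's.
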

\begin{proof}
	First, notice that by \cite[Corollaries~3.1 and 3.7]{SW09}, $-\beta+k\varepsilon_A\notin \sRes(A)$ for all $k\gg0$. Also notice that $\mathcal{O}_{T_A}^\beta\cong \mathcal{O}_{T_A}^{\beta'}$ for all $\beta'\equiv \beta \pmod{\ZZ^d}$. Hence, in light of \Cref{cor:phi-dagger}, we may replace $\beta$ with $\beta-k\varepsilon_A$ to assume that $-\beta\notin\sRes(A)$.
	
	\underline{Step 1:} 
	We show that $\beta-k\varepsilon_A\notin \qdeg\cone(H^0(\omega_{S_A}^\bullet)\to \omega_{S_A}^\bullet)$ for all ${k\gg0}$. Then, applying \cite[Theorem 5.4(3)]{SW09} along with a basic spectral sequence argument, it follows that $\hat{K}^A_\bullet(-;E_A-\beta+k\varepsilon_A)$ applied to the morphism $H^0(\omega_{S_A}^\bullet)\to \omega_{S_A}^\bullet$ is a quasi-isomorphism for all $k\gg0$.
	
	By \Cref{lem:omega-qdeg}, the union $\bigcup_{i>0} \qdeg H^i(\omega_{S_A}^\bullet)$ has codimension at least 1, and because each cohomology module of $\omega_{S_A}^\bullet$ is finitely generated, this union has finitely many irreducible components. Therefore, since $\varepsilon_A$ is in the relative interior of $\NN{A}$, we see that $\beta -k\varepsilon_A \notin \bigcup_{i>0} \qdeg H^i(\omega_{S_A}^\bullet)$ for all $k\gg0$. But the $i$th cohomology of $\cone(H^0(\omega_{S_A}^\bullet)\to \omega_{S_A}^\bullet)$ is $0$ if $i\leq 0$ and is $H^i(\omega_{S_A}^\bullet)$ if $i>0$. Hence, $\beta-k\varepsilon_A\notin\cone(H^0(\omega_{S_A}^\bullet)\to \omega_{S_A}^\bullet)$ for all ${k\gg0}$, as promised.
	
	\underline{Step 2:}
	We construct a quasi-isomorphism
	\[\hat{K}^A_\bullet(S_A;E_A-\beta + k\varepsilon_A) \xrightarrow{\;\mathrm{qi}\;} \hat{K}^A_\bullet(H^0(\omega_{S_A}^\bullet);E_A-\beta+ k\varepsilon_A)\]
	for ${k\gg0}$. Let $0\neq m\in H^0(\omega_{S_A}^\bullet)$ be homogeneous. Since $H^0(\omega_{S_A}^\bullet)\subseteq \CC[\ZZ^d]$ and is non-zero (it contains $m$), the zero ideal is one of its associated primes (in fact the only one). Therefore, $H^0(\omega_{S_A}^\bullet)$ must contain a twist of $S_A$; in particular, it must contain $k_0\varepsilon_A$ for some $k_0\in \NN$. Hence, $m$ may be chosen to have degree $k_0\varepsilon_A$.
	
	Now, consider the quotient $H^0(\omega_{S_A}^\bullet)/S_Am$. The quasidegree set of this quotient has codimension at least 1, so as before, $\beta -k\varepsilon_A \notin \qdeg(H^0(\omega_{S_A}^\bullet)/S_Am)$ for all $k\gg0$. Hence, the morphism
	\[ \hat{K}^A_\bullet(S_A(-k_0\varepsilon_A);E_A-\beta + k\varepsilon_A) \to \hat{K}^A_\bullet(H^0(\omega_{S_A}^\bullet); E_A - \beta + k\varepsilon_A)\]
	induced by right-multiplication by $m$ is a quasi-isomorphism for $k\gg0$. Applying \eqref{eq:ek-deg-shift} gives the result.
\end{proof}

The promised generalization is given by the following corollary:

\begin{corollary}
	Let $\beta\in \CC^d$. Then for all $k\gg0$,
	\[ \FL(\varphi_\dagger\mathcal{O}_{T_A}^\beta) \cong M_A(\beta-k\varepsilon_A).\]
\end{corollary}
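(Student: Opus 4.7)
The plan is to apply the Fourier--Laplace transform directly to \Cref{prop:phi-dagger} and then recognize the resulting Euler--Koszul complex as the GKZ system $M_A(\beta-k\varepsilon_A)$.

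First, since the hat notation denotes the inverse Fourier--Laplace transform and $\FL$ is an exact equivalence of categories, applying $\FL$ to the isomorphism supplied by \Cref{prop:phi-dagger} yields, for all $k\gg 0$,
\[
\FL(\varphi_\dagger\mathcal{O}_{T_A}^\beta) \;\cong\; K^A_\bullet\bigl(S_A;\, E_A-\beta+k\varepsilon_A\bigr) \;=\; K^A_\bullet\bigl(S_A;\, E_A-(\beta-k\varepsilon_A)\bigr).
\]

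Next I would identify this complex with $M_A(\beta-k\varepsilon_A)$. Unpacking the presentation of $M_A$ given at the start of the introduction, one sees that $M_A(\beta')=H_0^A(S_A; E_A-\beta')$ for any $\beta'\in\CC^d$; hence the isomorphism $K^A_\bullet(S_A; E_A-(\beta-k\varepsilon_A))\cong M_A(\beta-k\varepsilon_A)$ (with the right-hand side concentrated in homological degree $0$) holds as soon as the higher Euler--Koszul homologies $H_i^A(S_A; E_A-(\beta-k\varepsilon_A))$ vanish for all $i>0$.

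The main thing to verify is therefore this vanishing for $k\gg 0$, which is the only genuine (and minor) obstacle. By the theory of \cite{MMW05}, the set of rank-jumping parameters of $S_A$---those $\beta'$ for which some higher $H_i^A(S_A;E_A-\beta')$ is nonzero---is a finite union of translates of linear spans of proper faces of $A$, and in particular a proper Zariski-closed subset of $\CC^d$. Because $\varepsilon_A$ lies in the relative interior of the full-dimensional cone $\RR_{\geq 0}A$, the affine ray $\Set{\beta-k\varepsilon_A | k\in\NN}$ meets each such translate in only finitely many values of $k$. Intersecting the resulting cofinite set of good $k$ with the lower bound on $k$ already required by \Cref{prop:phi-dagger} produces a single ``$k\gg 0$'' threshold above which both conditions hold, completing the proof.
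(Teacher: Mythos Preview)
Your argument is correct, but it differs from the paper's at the one substantive step: showing that $K^A_\bullet(S_A; E_A-(\beta-k\varepsilon_A))$ is concentrated in homological degree~$0$. You do this by invoking the structure of the rank-jumping locus from \cite{MMW05} and arguing that the arithmetic progression $\beta-k\varepsilon_A$ eventually escapes it (essentially the same ``$\varepsilon_A$ is interior, so the ray leaves every proper face-span translate'' trick used inside the proof of \Cref{prop:phi-dagger}). The paper instead shows concentration in degree~$0$ for $\varphi_\dagger\mathcal{O}_{T_A}^\beta$ itself, uniformly in $\beta$: since $\Ddual(\varphi_\dagger\mathcal{O}_{T_A}^\beta)\cong \varphi_+\mathcal{O}_{T_A}^{-\beta}$ is concentrated in degree~$0$ by \cite[Proposition~2.1]{SW09} and $\Ddual$ is exact, so is $\varphi_\dagger\mathcal{O}_{T_A}^\beta$. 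The paper's route is slightly sharper (no need to enlarge the ``$k\gg0$'' threshold beyond what \Cref{prop:phi-dagger} already requires) and conceptually cleaner via duality; your route stays entirely within the Euler--Koszul/\cite{MMW05} framework and avoids citing \cite{SW09} at this point.
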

\begin{proof}
	By \Cref{prop:phi-dagger}, it suffices to show that $\varphi_\dagger\mathcal{O}_{T_A}^\beta$ has cohomology only in degree $0$. The holonomic dual of $\varphi_\dagger\mathcal{O}_{T_A}^\beta$ is $\varphi_+\mathcal{O}_{T_A}^{-\beta}$, which by \cite[Proposition~2.1]{SW09} has cohomology only in degree $0$. Then because $\Ddual$ is exact, the same applies to $\varphi_\dagger\mathcal{O}_{T_A}^\beta$.
\end{proof}

\section{Restricting Euler--Koszul Complexes to Orbits}
We now compute the restriction and exceptional restriction to an orbit (as defined in \eqref{eq:orbit}) of an (inverse Fourier-transformed) Euler--Koszul complex in terms of local cohomology and localizations, respectively. Recall from \S\ref{subsec:*things} that a *-injective $S_A$-module is an injective object in the category of $\ZZ^d$-graded $S_A$-modules, and every indecomposable *-injective $S_A$-module is a $\ZZ^d$-graded twist of $\CC\{\NN{F}-\NN{A}\}$ for some face $F\preceq A$.

\begin{proposition}\label{prop:injectives}
	Let $J^\bullet$ be a bounded below complex of *-injective $S_A$-modules, $\beta\in \CC^d$, and $F\preceq A$. Assume that each $J^i$ is a direct sum of twists of $\CC\{\NN{F}-\NN{A}\}$. Then there exists a quasi-isomorphism of double complexes\footnote{By a \emph{quasi-isomorphism of double complexes} between $M^{\bullet\bullet}$ and $M'^{\bullet\bullet}$, we mean a pair of morphisms 
	\[ M^{\bullet\bullet} \xleftarrow{\;f\;} N^{\bullet\bullet} \xrightarrow{\;g\;} M'^{\bullet\bullet}\] such that $\Tot(f)$ and $\Tot(g)$ are quasi-isomorphisms of complexes.}
\[ \{K_{-p}(J^q;E_A-\beta)\} \qi \left\{\bigoplus_{\lambda+\ZZ{F} \in \CC{F}/\ZZ{F}} K_{-p}(S_F[\partial^{-F}]; E_A-\lambda)\otimes_\CC J^q_{\beta-\lambda}\right\}.\]
\end{proposition}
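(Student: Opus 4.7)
The plan is to construct the quasi-isomorphism by exploiting the canonical $\ZZ F$-coset decomposition of $\CC\{\NN F - \NN A\}$ as a $\CC[\ZZ F]$-module and matching summands on the two sides via the Euler--Koszul shift identity \eqref{eq:ek-deg-shift}.

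First I would reduce, by additivity of the Euler--Koszul construction in its module argument and of the right-hand side in $J^q$, to the case $J^q = \CC\{\NN F - \NN A\}(-\mu_q)$ for a single twist; after replacing $\beta$ by $\beta - \mu_q$ using \eqref{eq:ek-deg-shift} we may further assume $\mu_q = 0$. The key combinatorial observation is that $\NN F - \NN A$ is closed under translation by $\pm\vec{a}_i$ for each $\vec{a}_i \in F$: writing $\alpha = \beta - \gamma$ with $\beta \in \NN F$ and $\gamma \in \NN A$, both $\alpha + \vec{a}_i = (\beta + \vec{a}_i) - \gamma$ and $\alpha - \vec{a}_i = \beta - (\gamma + \vec{a}_i)$ remain in $\NN F - \NN A$. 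Hence $\NN F - \NN A$ is a disjoint union of $\ZZ F$-cosets, giving a $\CC[\ZZ F]$-module decomposition
\[\CC\{\NN F - \NN A\} = \bigoplus_c S_F[\partial^{-F}](-\nu_c),\]
with $c$ running over the cosets and $\nu_c \in c$ chosen representatives. For each coset $c$ such that $\lambda_c := \beta - \nu_c$ lies in $\CC F$, the shift identity \eqref{eq:ek-deg-shift} provides a canonical isomorphism of Koszul complexes $K^A_\bullet(S_F[\partial^{-F}](-\nu_c); E_A - \beta) \cong K^A_\bullet(S_F[\partial^{-F}]; E_A - \lambda_c)$, identifying the $c$-piece on the left with the $\lambda_c$-summand on the right, tensored with the one-dimensional space $J^q_{\beta - \lambda_c} \cong \CC$. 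Compatibility with the $J^\bullet$-differentials is by naturality of the entire construction.

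The main obstacle is that the decomposition above holds only as $\CC[\ZZ F]$-modules: the operators $\partial_j$ for $\vec{a}_j \notin F$ on $\CC\{\NN F - \NN A\}$ shift between cosets, whereas on each summand $S_F[\partial^{-F}](-\nu_c)$ they act as zero, so the left-hand Koszul complex does not literally split as a direct sum. To bridge this gap, I plan to filter $\CC\{\NN F - \NN A\}$ by the ascending chain of $(I_F^A)^k$-torsion submodules $F_k := 0 :_{\CC\{\NN F - \NN A\}} (I_F^A)^k$, which exhausts the module since it is $I_F^A$-torsion. Each associated graded piece $F_k / F_{k-1}$ is killed by $I_F^A$, and so is an honest $S_F[\partial^{-F}]$-module on which the naive direct-sum decomposition is literal and the comparison map is an isomorphism of Euler--Koszul complexes. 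Passing to the filtered colimit, one obtains a filtration on the left-hand side whose $E_1$-page is the right-hand side of the proposition (equivalently, a zig-zag of double complexes in the sense of the footnote). The hardest step will be showing that this spectral sequence degenerates at $E_1$, i.e.\ that the coset-mixing contributions of the $\partial_j$ for $\vec{a}_j \notin F$ produce no nontrivial higher differentials; I expect to deduce this from the *-injectivity of $\CC\{\NN F - \NN A\}$ in the graded category, which should allow the filtration to be refined into a genuine direct-sum decomposition of graded $S_A$-modules up to a homotopy that respects the Euler--Koszul differential.
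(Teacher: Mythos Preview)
Your setup is sound and you correctly isolate the main obstacle: the $\ZZ F$-coset decomposition of $\CC\{\NN F - \NN A\}$ holds only over $\CC[\ZZ F]$, not over $S_A$. But your proposed resolution via *-injectivity cannot work. The module $\CC\{\NN F - \NN A\}$ is the \emph{indecomposable} *-injective hull of $S_F$ (see \S\ref{subsubseq:*-inj}), so the $(I_F^A)^k$-torsion filtration certainly does not split as graded $S_A$-modules, and no homotopy will manufacture such a splitting. This leaves the $E_1$-degeneration of your spectral sequence genuinely open.

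The paper bypasses the spectral sequence entirely by constructing the zig-zag directly. Set $M^q \coloneqq S_A[\partial^{-F}] \cdot J^q_{\beta+\CC F}$ and $M'^q \coloneqq \langle \partial_i \mid \vec{a}_i \notin F \rangle\, M^q$. By construction neither $M'^q$ nor $J^q/M^q$ has any true degrees in $\beta + \CC F$; since both are $I_F^A$-torsion $R_A[\partial^{-F}]$-modules, \Cref{lem:torsion-qdeg} gives $\beta \notin \qdeg(M'^q)$ and $\beta \notin \qdeg(J^q/M^q)$. The Euler--Koszul functor at parameter $\beta$ therefore kills both, and the zig-zag $J^q \hookleftarrow M^q \twoheadrightarrow M^q/M'^q$ becomes a quasi-isomorphism of double complexes after applying $K_\bullet(-; E_A - \beta)$. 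Now $M^q/M'^q$ is an honest graded $\CC[\ZZ F]$-module (the coset-mixing operators $\partial_j$, $\vec{a}_j\notin F$, have been killed in the quotient), so your direct-sum identification applies literally there and produces the right-hand side.

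This quasidegree argument is, incidentally, exactly what would salvage your spectral sequence: the higher differentials are induced by multiplication by $\partial_j$ with $\vec{a}_j \notin F$, which shifts degrees out of $\beta + \CC F$ and hence lands in Euler--Koszul complexes of modules whose quasidegree sets miss $\beta$; those complexes are acyclic. So your route can be completed, but only by importing the key idea from the paper's more direct construction.
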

\begin{proof} Consider the subcomplexes of $J^\bullet$ given by $M^\bullet = S_A[\partial^{-F}] J^\bullet_{\beta+\CC{F}}$ and $M'^\bullet = \Braket{\partial_i|\vec{a}_i\notin F} M^\bullet$ (note that $J^\bullet$ is a complex of $S_A[\partial^{-F}]$-modules, so $M^\bullet$ is in fact a subcomplex of $J^\bullet$). We claim for all $q$ that $\beta\notin \qdeg(M'^q)$ and $\beta\notin \qdeg(J^q/M^q)$. To see this, notice that the intersections of $\beta + \CC{F}$ with $\tdeg(M'^q)$ and with $\tdeg(J^q/M^q)$ are both empty by construction. But both $M'^q$ and $J^q/M^q$ are $I_F^A$-torsion (because $J^q$ is). Hence, by \Cref{lem:torsion-qdeg}, $\beta$ is a quasidegree of neither, proving the claim.
	
	From the claim, we get that for all $q$, the morphisms 
	\[ K_\bullet(M^q; E_A-\beta) \to K_\bullet(J^q; E_A-\beta)\]
and
	\[ K_\bullet(M^q; E_A-\beta) \to K_\bullet(M^q/M'^q; E_A-\beta)\]
	are both quasi-isomorphisms, and therefore we get a quasi-isomorphism of double complexes
	\begin{equation}\label{eq:M-mod-M'}
		\{K_{-p}(J^q;E_A-\beta)\} \qi \{K_{-p}(M^q/M'^q; E_A-\beta)\}.
	\end{equation}
	
	Next, notice that $M^\bullet/M'^\bullet$ is a complex of graded modules over $\CC[\ZZ{F}]$ (which we identify with $S_F[\partial^{-F}]$). So, since $\CC[\ZZ{F}]$ is a *-simple ring, each $M^q/M'^q$ is a direct sum of $\ZZ^d$-graded twists of $\CC[\ZZ{F}]$. Therefore, by gradedness,
	\begin{equation}\label{eq:alpha-split}
		M^\bullet/M'^\bullet = \bigoplus_{\alpha +\ZZ{F} \in \ZZ^d/\ZZ{F}} (M^\bullet/M'^\bullet)_\alpha\otimes_\CC \CC[\ZZ{F}](-\alpha).
	\end{equation}
	Now, as complexes of vector spaces, $(M^\bullet/M'^\bullet)_\alpha$ is isomorphic to $J^\bullet_\alpha$ if $\alpha\in \beta + \CC F$ and is zero otherwise. So, combining this with eqs.~\eqref{eq:M-mod-M'} and \eqref{eq:alpha-split}, we get
	\begin{align}
		\{K_{-p}(J^q;E_A-\beta)\} 
		&\qi \left\{\bigoplus_{\substack{\alpha +\ZZ{F} \in \ZZ^d/\ZZ{F}\\ \alpha\in \beta+\CC F}} K_{-p}(\CC[\ZZ{F}](-\alpha)\otimes_\CC J^q_\alpha; E_A-\beta)\right\} \notag\\
		&= \left\{\bigoplus_{\substack{\alpha +\ZZ{F} \in \ZZ^d/\ZZ{F}\\ \alpha\in \beta+\CC F}} K_{-p}(\CC[\ZZ{F}](-\alpha); E_A-\beta)\otimes_\CC J^q_\alpha\right\}.
	\end{align}
	But $K_\bullet(\CC[\ZZ{F}](-\alpha); E_A-\beta) = K_\bullet(\CC[\ZZ{F}]; E_A - \beta + \alpha)$. So, re-indexing the sum, we are done.
\end{proof}


Let $M^\bullet$ be a bounded complex of $\ZZ^d$-graded $S_A$-modules, let $\beta\in \CC^d$, and let $F\preceq A$ be a face. For $\lambda\in \CC{F}$, we give $\RGamma_{\orbit(F)}(M^\bullet)_{\beta-\lambda+\ZZ{F}}$ the structure of a complex of $D_{T_F}$-modules as follows: Let $m$ be a homogeneous element of $H^i_{\orbit(F)}(S_A)_{\beta-\lambda + \ZZ{F}}$ for some $i$. Recalling the definition of $\vartheta_u$ from \eqref{eq:vartheta}, we set 
\begin{equation}\label{eq:DTF-struct}
	\vartheta_u\cdot m \coloneqq \braket{\deg(m)-\beta, u}m \qquad (u\in (\CC{F})^*).
\end{equation}
Observing that \eqref{eq:DTF-struct} makes no reference to $\lambda$, we get an isomorphism
\begin{equation}\label{eq:DTF-struct-lc}
	\bigoplus_{\lambda+\ZZ{F}\in \CC{F}/\ZZ{F}} \RGamma_{\orbit(F)}(M^\bullet)_{\beta-\lambda+\ZZ{F}} \cong \RGamma_{\orbit(F)}(M^\bullet)_{\beta+\CC{F}}.
\end{equation}

In the theorem below, we use the convention that $\Bigwedge\CC^k$ lives in cohomological degrees $-k$ through $0$. 

\begin{theorem}\label{th:orbit-restr}
	Let $M^\bullet$ be a bounded complex of $\ZZ^d$-graded $S_A$-modules, $\beta\in \CC^d$. Then for all faces $F\preceq A$,
	\[ i^+_{\orbit(F)}\hat{K}_\bullet(M^\bullet; E_A-\beta) \cong \bigoplus_{\lambda+\ZZ{F}\in \CC{F}/\ZZ{F}} \mathcal{O}_{T_F}^\lambda \otimes_\CC \RGamma_{\orbit(F)}(M^\bullet)_{\beta-\lambda} \otimes_\CC \Bigwedge\CC^{d_{A/F}}.\]
	This isomorphism is functorial in $M^\bullet$.
	
	An equivalent presentatjon, absorbing the $\mathcal{O}_{T_F}^\lambda$ into the local cohomology, is
	\[ i^+_{\orbit(F)}\hat{K}_\bullet(M^\bullet; E_A-\beta) \cong \bigoplus_{\lambda+\ZZ{F}\in \CC{F}/\ZZ{F}} \RGamma_{\orbit(F)}(M^\bullet)_{\beta-\lambda+\ZZ{F}} \otimes_\CC \Bigwedge\CC^{d_{A/F}}.\]
	This can be further compacted using \eqref{eq:DTF-struct-lc} to give
	\begin{equation}\label{eq:orbit-restr-cpt}
		i^+_{\orbit(F)}\hat{K}_\bullet(M^\bullet; E_A-\beta) \cong  \RGamma_{\orbit(F)}(M^\bullet)_{\beta+\CC{F}}\otimes_\CC \Bigwedge\CC^{d_{A/F}}.
	\end{equation}
\end{theorem}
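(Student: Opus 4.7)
The plan is to reduce the computation via \Cref{lem:EKvsLC} and Kashiwara's equivalence to the Euler--Koszul complex of a *-injective resolution of $\RGamma_{\orbit(F)}(M^\bullet)$, apply \Cref{prop:injectives} to decompose the result by cosets in $\CC F/\ZZ F$, and finally identify each resulting summand. The final identification is the main obstacle.

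By \Cref{lem:EKvsLC} the local cohomology commutes with the Euler--Koszul complex: $\RGamma_{\orbit(F)}\hat{K}^A_\bullet(M^\bullet; E_A-\beta)\cong \hat{K}^A_\bullet(\RGamma_{\orbit(F)}(M^\bullet); E_A-\beta)$. Since $\orbit(F)$ is closed in the principal open $U=\widehat{\CC^n}\setminus V(\partial^F)$, Kashiwara's equivalence yields $i^+_{\orbit(F)}\cong i^+_{\orbit(F)}\RGamma_{\orbit(F)}$ on $\DD^b_{qc}(D_{\widehat{\CC^n}})$, so
\[ i^+_{\orbit(F)}\hat{K}^A_\bullet(M^\bullet; E_A-\beta)\cong i^+_{\orbit(F)}\hat{K}^A_\bullet(\RGamma_{\orbit(F)}(M^\bullet); E_A-\beta). \]
I would then choose a *-injective resolution $J^\bullet\qi \RGamma_{\orbit(F)}(M^\bullet)$. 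By \Cref{ex:orbitLC}, the cohomologies of $\RGamma_{\orbit(F)}(M^\bullet)$ are simultaneously $\partial^F$-inverted and $I_F^A$-torsion, which (analogously to \Cref{rmk:face-LC-om}) forces each indecomposable *-injective summand of $J^\bullet$ to be a $\ZZ^d$-graded twist of $\CC\{\NN F-\NN A\}$. We are therefore in the exact setting of \Cref{prop:injectives}.

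Applying \Cref{prop:injectives} (and passing to inverse Fourier--Laplace transforms and total complexes) produces a quasi-isomorphism
\[ \hat{K}^A_\bullet(J^\bullet; E_A-\beta)\simeq \bigoplus_{\lambda+\ZZ F\in \CC F/\ZZ F}\hat{K}^A_\bullet(S_F[\partial^{-F}]; E_A-\lambda)\otimes_\CC J^\bullet_{\beta-\lambda}. \]
The factor $J^\bullet_{\beta-\lambda}$ is a complex of $\CC$-vector spaces whose cohomology computes $\RGamma_{\orbit(F)}(M^\bullet)_{\beta-\lambda}$, and $i^+_{\orbit(F)}$ commutes past such a constant factor. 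The theorem therefore reduces to the identification
\[ i^+_{\orbit(F)}\hat{K}^A_\bullet(S_F[\partial^{-F}]; E_A-\lambda)\cong \OO_{T_F}^\lambda\otimes_\CC \Bigwedge\CC^{d_{A/F}}\quad(\lambda\in\CC F). \]

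The hard part is this last identification. The key observation is that because $\CC[\ZZ F]=S_F[\partial^{-F}]$ is graded in $\ZZ F\subseteq \CC F$ and $\lambda\in \CC F$, every $c\in(\CC F)^\perp$ yields $\sum_i c_i(E_i-\lambda_i)\circ=0$ on $D_A\otimes_{R_A}\CC[\ZZ F]$: the degree-shift contribution $c\cdot(\deg -\lambda)$ vanishes since $\deg\in \ZZ F\subseteq \CC F$ and $\lambda\in \CC F$, and the remaining ``derivation'' contribution $\sum_{\vec a_j\in F}(c\cdot \vec a_j)x_j\partial_j$ vanishes since $c\cdot \vec a_j=0$ for $\vec a_j\in F$. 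Choosing a basis of $\CC^d$ adapted to the splitting $\CC F\oplus(\CC F)^\perp$, the $d$-operator Koszul complex of $(E_A-\lambda)\circ$ on $D_A\otimes\CC[\ZZ F]$ splits as the Koszul complex of its $d_F$ essential operators tensored with $\Bigwedge \CC^{d_{A/F}}$. The essential piece is (up to the change of basis) the GKZ Euler--Koszul complex for the rank-$d_F$ face $F$ acting on $\CC[\ZZ F]$, whose inverse Fourier--Laplace transform is the pushforward $\varphi_{F+}\OO_{T_F}^\lambda$ along the orbit embedding $\varphi_F\colon T_F\cong \orbit(F)\hookrightarrow \widehat{\CC^n}$ (by the argument of \cite[Corollary~3.7]{SW09} applied to $F$ and the torus $T_F$). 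Restricting via Kashiwara gives $i^+_{\orbit(F)}\varphi_{F+}\OO_{T_F}^\lambda\cong \OO_{T_F}^\lambda$. The $D_{T_F}$-structure on the local cohomology graded pieces defined in \eqref{eq:DTF-struct} matches the one produced by this identification, and functoriality in $M^\bullet$ is inherited from each step.
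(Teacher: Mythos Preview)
Your argument is essentially the paper's proof. The only cosmetic difference is that the paper first takes a *-injective resolution $J^\bullet$ of $M^\bullet$ and then observes that $\Gamma_{\orbit(F)}(J^\bullet)$ is automatically a complex of direct sums of twists of $\CC\{\NN F-\NN A\}$, whereas you resolve $\RGamma_{\orbit(F)}(M^\bullet)$ and then argue that a suitable resolution has this form; the paper's ordering avoids that extra step. Two small points: your justification of the splitting $\hat K^A_\bullet(S_F[\partial^{-F}];E_A-\lambda)\cong \hat K^F_\bullet(S_F[\partial^{-F}];E_F-\lambda)\otimes\Bigwedge\CC^{d_{A/F}}$ is correct once you note that the $\circ$-action is left $D_A$-linear and hence determined by its effect on the generators $1\otimes m$ (whose degrees do lie in $\ZZ F$), and the relevant citation for $\hat K^F_\bullet(S_F[\partial^{-F}];E_F-\lambda)\cong\varphi_{F+}\OO_{T_F}^\lambda$ is \cite[Prop.~2.1]{SW09} rather than Cor.~3.7.
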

\begin{proof}
	Let $J^\bullet$ be a (bounded below) *-injective $S_A$-module resolution of $M^\bullet$. Then $\RGamma_{\orbit(F)}(M^\bullet)\cong \Gamma_{\orbit(F)}(J^\bullet)$, which is a complex of *-injective $S_A$-modules each of which is either $0$ or has $I_F^A$ as its only associated prime; that is, each $\Gamma_{\orbit(F)}(J^i)$ is a direct sum of twists of $\CC\{\NN{F}-\NN{A}\}$. Thus, noting that $i^+_{\orbit(F)} \cong i^+_{\orbit(F)}\RGamma_{\orbit(F)}$, \Cref{prop:injectives} and \Cref{lem:EKvsLC} give
	\begin{align}
		i^+_{\orbit(F)}\hat{K}_\bullet(M^\bullet; E_A-\beta)
		&\cong i^+_{\orbit(F)}\hat{K}_\bullet(\RGamma_{\orbit(F)}(M^\bullet); E_A-\beta)\notag\\
		&\cong \bigoplus_{\lambda+\ZZ{F}\in \CC{F}/\ZZ{F}} i^+_{\orbit(F)}\hat{K}_\bullet(S_F[\partial^{-F}]; E_A-\lambda)\otimes_\CC \RGamma_{\orbit(F)}(M^\bullet)_{\beta-\lambda}.\label{eq:orbit-restr-dirsum}
	\end{align}
	But for $\lambda\in \CC{F}$, 
	\begin{equation}\label{eq:wedges}
		i^+_{\orbit(F)}\hat{K}_\bullet(S_F[\partial^{-F}]; E_A-\lambda) 
		\cong i^+_{\orbit(F)}\hat{K}^F_\bullet(S_F[\partial^{-F}]; E_F-\lambda)\otimes_\CC \Bigwedge\CC^{d_{A/F}}.
	\end{equation}
	Now by \cite[Prop.~2.1]{SW09}, $\hat{K}^F_\bullet(S_F[\partial^{-F}]; E_F-\lambda)$ is isomorphic to the direct image $\varphi_{F+}\mathcal{O}_{T_F}^\lambda$, where $\varphi_F$ is the torus embedding of $T_F$ into $V(\partial_i\mid\vec{a}_i\notin F)$. Then because $i_{\orbit(F)}^+\varphi_{F+}\cong \operatorname{id}$, we get that
	\begin{equation*}
		i^+_{\orbit(F)}\hat{K}_\bullet(S_F[\partial^{-F}]; E_A-\lambda) \cong \mathcal{O}_{T_F}^\lambda.
	\end{equation*}
	Combining this with \eqref{eq:orbit-restr-dirsum} and \eqref{eq:wedges} gives the result.
\end{proof}

Before stating \Cref{th:exc-restr}, we recall the notion of ($\ZZ^d$-graded) Matlis duality:

\begin{definition}\label{def:matlis}
	Let $Q$ be an affine semigroup. The \emph{Matlis dual} of the graded $\CC[Q]$-module $M$ is the graded $\CC[Q]$-module $M^\vee \coloneqq \uHom_\CC(M,\CC)$.
\end{definition}

\begin{theorem}\label{th:exc-restr}
	Let $\beta\in \CC^d$, and let $M$ be a finitely generated graded $S_A$-module. Then for all faces $F\preceq A$,
	\[ i_{\orbit(F)}^\dagger \hat{K}^A_\bullet(M; E_A-\beta) \cong \bigoplus_{\lambda+\ZZ{F}\in\CC{F}/\ZZ{F}} \mathcal{O}_{T_F}^\lambda\otimes_\CC M[\partial^{-F}]_{\beta-\lambda} \otimes_\CC\Bigwedge\CC^{d_{A/F}}. \]
	This isomorphism is functorial in $M$.
	
	As in \Cref{th:orbit-restr}, this isomorphism may also be written as
	\[ i_{\orbit(F)}^\dagger \hat{K}^A_\bullet(M; E_A-\beta) \cong \bigoplus_{\lambda+\ZZ{F}\in\CC{F}/\ZZ{F}} M[\partial^{-F}]_{\beta-\lambda+\ZZ{F}} \otimes_\CC\Bigwedge\CC^{d_{A/F}} \]
	and as
	\[ i_{\orbit(F)}^\dagger \hat{K}^A_\bullet(M; E_A-\beta) \cong M[\partial^{-F}]_{\beta+\CC{F}} \otimes_\CC\Bigwedge\CC^{d_{A/F}}.\]
\end{theorem}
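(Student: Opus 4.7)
The plan is to derive the statement from \Cref{th:orbit-restr} via holonomic duality. From the definition $i^\dagger_{\orbit(F)} = \Ddual_{T_F} \circ i^+_{\orbit(F)} \circ \Ddual$ together with \Cref{th:ek-dual}, one obtains (modulo the $\ZZ^d$-grading shift in \Cref{th:ek-dual})
\[
i^\dagger_{\orbit(F)} \hat{K}_\bullet^A(M; E_A-\beta)
\cong \Ddual_{T_F}\bigl[\,i^+_{\orbit(F)} \hat{K}_\bullet^A(\Rdual M; E_A+\beta)\,\bigr].
\]
Applying \Cref{th:orbit-restr} to $\Rdual M$ with parameter $-\beta$ identifies the bracketed term as the direct sum $\bigoplus_{\lambda+\ZZ F \in \CC F/\ZZ F} \mathcal{O}_{T_F}^\lambda \otimes_\CC \RGamma_{\orbit(F)}(\Rdual M)_{-\beta - \lambda} \otimes_\CC \Bigwedge\CC^{d_{A/F}}$. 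I would then push $\Ddual_{T_F}$ through the direct sum summand-by-summand, using $\Ddual_{T_F}\mathcal{O}_{T_F}^\lambda \cong \mathcal{O}_{T_F}^{-\lambda}$, $\CC$-linear duality on the complex-of-vector-spaces factor, and the determinant identification $\uHom_\CC(\Bigwedge\CC^{d_{A/F}},\CC) \cong \Bigwedge\CC^{d_{A/F}}[d_{A/F}]$. After reindexing $\lambda \mapsto -\lambda$, this reduces \Cref{th:exc-restr} to the graded Matlis-duality statement
\[
\bigl(\RGamma_{\orbit(F)}(\Rdual M)\bigr)^\vee \cong M[\partial^{-F}][-d_{A/F}]
\]
in the derived category of graded $S_A$-modules.

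For $M = S_A$, this isomorphism follows from $\Rdual S_A \cong \omega_{S_A}^\bullet$ (see \eqref{eq:om-vs-DS}) together with \Cref{rmk:face-LC-om}, once one checks that $(S_A[\partial^{-F}])^\vee \cong \CC\{\NN F - \NN A\}$ as graded $S_A$-modules; both sides have a one-dimensional degree-$\alpha$ component exactly when $\alpha \in \NN F - \NN A$, and the $S_A$-action agrees under Matlis duality by a direct computation on the monomial basis. The general case then follows by resolving $M$ by a bounded-above complex of *-free graded $S_A$-modules and applying the $M = S_A$ case termwise, since both sides of the duality are $S_A$-linear functors of $M$ in the derived sense.

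The main obstacle is tracking the $D_{T_F}$-structures from \eqref{eq:DTF-struct} and the various cohomological shifts through the chain of identifications. In particular, one must verify that the Matlis duality above, when combined with the splitting \eqref{eq:DTF-struct-lc}, respects the $\CC F/\ZZ F$ decomposition and assigns the monodromy character $\mathcal{O}_{T_F}^\lambda$ (and not $\mathcal{O}_{T_F}^{-\lambda}$) to the summand indexed by $\lambda + \ZZ F$ after the reindexing $\lambda \mapsto -\lambda$; this is a degree-bookkeeping exercise rather than a conceptual difficulty.
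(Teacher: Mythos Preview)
Your outline is essentially the paper's proof: apply \Cref{th:ek-dual}, feed $\Rdual M$ at parameter $-\beta$ into \Cref{th:orbit-restr}, push $\Ddual_{T_F}$ through the summands, reindex, and reduce to the graded Matlis-duality identity for $\RGamma_{\orbit(F)}(\Rdual M)^\vee$. The only substantive difference is in how that last identity is established. You propose verifying it for $M=S_A$ (via \eqref{eq:om-vs-DS}, \Cref{rmk:face-LC-om}, and $(S_A[\partial^{-F}])^\vee\cong\CC\{\NN F-\NN A\}$) and then bootstrapping along a *-free resolution; the paper instead proves it for arbitrary finitely generated $M$ in one pass, rewriting $\RGamma_{\orbit(F)}(\Rdual M)$ as $\RuHom_{S_A[\partial^{-F}]}\bigl(M[\partial^{-F}],\RGamma_{\orbit(F)}(\omega_{S_A}^\bullet)\bigr)$ and then invoking \eqref{eq:face-LC-om}, the identification $\CC\{\NN F-\NN A\}\cong S_A[\partial^{-F}]^\vee$, and \cite[Lem.~11.16]{ms05}. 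The paper's route avoids having to argue that Matlis duality interacts well with the resolution and gives functoriality in $M$ for free.

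Two small bookkeeping points: with the paper's convention that $\Bigwedge\CC^k$ sits in degrees $-k$ through $0$, the $\CC$-dual is $\Bigwedge\CC^{d_{A/F}}[-d_{A/F}]$, not $[d_{A/F}]$; correspondingly the Matlis-duality statement you need is $\bigl(\RGamma_{\orbit(F)}(\Rdual M)\bigr)^\vee \cong M[\partial^{-F}][d_{A/F}]$, not $[-d_{A/F}]$. Your two sign slips cancel, so the final answer is unaffected.
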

\begin{proof}
	By \Cref{th:ek-dual,th:orbit-restr},
	\begin{equation*}
		i_{\orbit(F)}^+ \Ddual\hat{K}^A_\bullet(M; E_A-\beta) \cong \bigoplus_{\lambda+\ZZ{F}\in\CC{F}/\ZZ{F}} \mathcal{O}_{T_F}^\lambda\otimes_\CC \RGamma_{\orbit(F)}(\Rdual{M})_{-\beta-\lambda} \otimes_\CC\Bigwedge\CC^{d_{A/F}}.
	\end{equation*}
	Dualizing, we get
	\begin{align*}
		i_{\orbit(F)}^\dagger\hat{K}^A_\bullet(M; E_A-\beta) 
			&\cong \bigoplus_{\lambda+\ZZ{F}\in\CC{F}/\ZZ{F}} \mathcal{O}_{T_F}^{-\lambda}\otimes_\CC [\RGamma_{\orbit(F)}(\Rdual{M})_{-\beta-\lambda}]^* \otimes_\CC\Bigwedge\CC^{d_{A/F}}[-d_{A/F}]\\
			&\cong \bigoplus_{\lambda+\ZZ{F}\in\CC{F}/\ZZ{F}} \mathcal{O}_{T_F}^{-\lambda}\otimes_\CC [\RGamma_{\orbit(F)}(\Rdual{M})^\vee]_{\beta+\lambda} \otimes_\CC\Bigwedge\CC^{d_{A/F}}[-d_{A/F}]\\
			&\cong \bigoplus_{\lambda+\ZZ{F}\in\CC{F}/\ZZ{F}} \mathcal{O}_{T_F}^\lambda\otimes_\CC [\RGamma_{\orbit(F)}(\Rdual{M})^\vee]_{\beta-\lambda} \otimes_\CC\Bigwedge\CC^{d_{A/F}}[-d_{A/F}],
	\end{align*}
	where $(-)^*$ is the vector space duality functor. In the notation of \Cref{ex:orbitLC}, we have
	\[\RGamma_{\orbit(F)}(\Rdual{M}) \cong \RGamma_{\braket{\partial_i|\vec{a}_i\notin F}}\left(\Rdual(M)[\partial^{-F}]\right).\]
	So, by \eqref{eq:DR-vs-DS} and because $M$ is finitely generated, we get
	\begin{align*}
		\RGamma_{\orbit(F)}(\Rdual{M}) 
		&\cong \RGamma_{\braket{\partial_i|\vec{a}_i\notin F}}(S_A) \otimes^\Lder_{S_A} \RuHom_{S_A}(M,\omega_{S_A}^\bullet)[\partial^{-F}]\\
		&\cong \RGamma_{\braket{\partial_i|\vec{a}_i\notin F}}(S_A) \otimes^\Lder_{S_A} \RuHom_{S_A}\!\left(M, \omega_{S_A}^\bullet[\partial^{-F}]\right)\\
		&\cong \RuHom_{S_A[\partial^{-F}]}\!\left(M[\partial^{-F}], \RGamma_{\braket{\partial_i|\vec{a}_i\notin F}}(S_A) \otimes^\Lder_{S_A} \omega_{S_A}^\bullet[\partial^{-F}]\right)\\
		&\cong \RuHom_{S_A[\partial^{-F}]}\!\left(M[\partial^{-F}], \RGamma_{\braket{\partial_i|\vec{a}_i\notin F}}\!\left( \omega_{S_A}^\bullet[\partial^{-F}]\right)\right)\\
		&\cong \RuHom_{S_A[\partial^{-F}]}\!\left(M[\partial^{-F}], \RGamma_{\orbit(F)}(\omega_{S_A}^\bullet)\right)\\
		&\cong \uHom_{S_A[\partial^{-F}]}\!\left(M[\partial^{-F}], \Gamma_{\orbit(F)}(\omega_{S_A}^\bullet)\right).
	\end{align*}
	But $\Gamma_{\orbit(F)}(\omega_{S_A}^\bullet) \cong \CC\{\NN{F}-\NN{A}\}[-d_{A/F}]$ by \eqref{eq:face-LC-om}, and $\CC\{\NN{F}-\NN{A}\}\cong S_A[\partial^{-F}]^\vee$. So, applying \cite[Lem.~11.16]{ms05}, we see that
	\[ \RGamma_{\orbit(F)}(\Rdual M)^\vee \cong (M[\partial^{-F}]^\vee[-d_{A/F}])^\vee \cong M[\partial^{-F}][d_{A/F}].\]
	Therefore,
	\begin{align*}
		i_{\orbit(F)}^\dagger\hat{K}^A_\bullet(M; E_A-\beta)
		&\cong \bigoplus_{\lambda+\ZZ{F}\in\CC{F}/\ZZ{F}} \mathcal{O}_{T_F}^\lambda\otimes_\CC M[\partial^{-F}]_{\beta-\lambda} \otimes_\CC\Bigwedge\CC^{d_{A/F}},
	\end{align*}
	as hoped.
\end{proof}


\section{$A$-Hypergeometric Systems via Direct Images}\label{sec:dirims}
In subsection~\ref{subsec:exc}, we introduce the notion of strongly $(A,F)$-exceptional quasidegrees and prove some related lemmas. In subsection~\ref{subsec:contiguity}, we study an effect of contiguity on Euler--Koszul complexes. We then state and prove the main theorems, \Cref{th:plus-dag,th:dag-plus}, in subsection~\ref{subsec:main-th}.

\medskip
Given an open subset $U\subseteq \widehat{\CC^n}$ containing (the image of) $T_A$, consider the inclusion maps in the commutative diagram below:
\[\begin{tikzcd}
		& (\CC^*)^n \arrow[dr, hook, "\varpi"] \arrow[dd, hook, "j_U"]	&\\
	T_A \arrow[ur, hook, "\iota"] \arrow[dr, hook, "\iota_U"]	&	& \widehat{\CC^n}\\
		& U \arrow[ur, hook, "\varpi_U"]	&
\end{tikzcd}\]
The morphisms $\iota$ and $\iota_U$ are the torus embedding $\varphi$ with codomain restricted to $(\CC^*)^n$ and $U$, respectively. The remaining morphisms are the inclusions.

\begin{lemma}\label{lem:dag-plus}
	With notation as above, there are, for every $M^\bullet\in\DD^b_c(D_{T_A})$, natural isomorphisms $\varpi_{U+}\iota_{U\dagger}M^\bullet\cong \RGamma_U\varphi_\dagger M^\bullet$ and $\varpi_{U\dagger}\iota_{U+}M^\bullet\cong \varpi_{U\dagger}\varpi_U^{-1}\varphi_+M^\bullet$.
\end{lemma}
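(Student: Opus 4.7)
The plan is to factor the torus embedding as $\varphi = \varpi_U \circ \iota_U$, so by functoriality of the $D$-module direct image and exceptional direct image, $\varphi_+ \cong \varpi_{U+}\iota_{U+}$ and $\varphi_\dagger \cong \varpi_{U\dagger}\iota_{U\dagger}$. Both claimed isomorphisms will then follow from standard properties of $D$-module functors along the open immersion $\varpi_U$.

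The facts I would invoke are: (i) since $\varpi_U$ is an open immersion of smooth varieties of equal dimension, the shift in $\varpi_U^+$ vanishes, so $\varpi_U^+ = \varpi_U^{-1}$; (ii) for any $N\in \DD^b(D_{\widehat{\CC^n}})$, there is a natural isomorphism $\RGamma_U(N) \cong \varpi_{U+}\varpi_U^{-1}N$, coming from the distinguished triangle for local cohomology with supports in an open subset; and (iii) the co-unit maps $\varpi_U^{-1}\varpi_{U+} \to \mathrm{id}$ and $\varpi_U^{-1}\varpi_{U\dagger} \to \mathrm{id}$ are natural isomorphisms, since $\varpi_U$ is an open immersion and extensions by $+$ or by $\dagger$ agree with the identity upon restriction.

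Granted these, the first isomorphism follows by the computation
\[
\RGamma_U\varphi_\dagger M^\bullet \cong \varpi_{U+}\varpi_U^{-1}\,\varpi_{U\dagger}\iota_{U\dagger}M^\bullet \cong \varpi_{U+}\iota_{U\dagger}M^\bullet,
\]
where the first isomorphism uses (ii) and the second uses (iii). For the second isomorphism, (iii) applied to $\varphi_+ = \varpi_{U+}\iota_{U+}$ gives $\varpi_U^{-1}\varphi_+ M^\bullet \cong \iota_{U+}M^\bullet$, and applying $\varpi_{U\dagger}$ yields the desired identification. Naturality in $M^\bullet$ is inherited from the naturality of the underlying unit/co-unit morphisms and of the local cohomology triangle.

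I do not expect any serious obstruction here; the proof is essentially a bookkeeping exercise. The only point requiring some care is to make sure that for the open immersion $\varpi_U$ one may legitimately replace $\varpi_U^{-1}$ with $\varpi_U^{+}$ and that the co-unit isomorphisms in (iii) hold on all of $\DD^b_c(D_U)$ (and on the image of $\iota_{U\dagger}$ and $\iota_{U+}$ in particular), which is standard since open immersions are \'etale, so base-change/projection morphisms are isomorphisms without any further hypotheses.
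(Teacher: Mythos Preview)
Your argument is correct and is in fact cleaner than the paper's. Both proofs ultimately establish $\iota_{U\dagger}\cong \varpi_U^+\varphi_\dagger$ and then apply $\varpi_{U+}$ together with $\varpi_{U+}\varpi_U^+\cong\RGamma_U$. The difference lies in how that identification is obtained. The paper introduces the auxiliary factorization $T_A\hookrightarrow(\CC^*)^n\hookrightarrow U$ via $j_U$, verifies by hand on sections that $j_{U*}=\varpi_U^*\varpi_*$ (using that $j_U$ is an affine open immersion), and then unwinds the definition of $\iota_{U\dagger}=\Ddual_U\iota_{U+}\Ddual_{T_A}$ through this factorization. You bypass all of this by invoking functoriality of $(\,\cdot\,)_\dagger$ directly (so $\varphi_\dagger=\varpi_{U\dagger}\iota_{U\dagger}$) together with the standard fact that $\varpi_U^{-1}\varpi_{U\dagger}\cong\mathrm{id}$ for an open immersion; the latter follows immediately from $\varpi_U^{-1}\varpi_{U+}\cong\mathrm{id}$ once one notes that $\Ddual$ commutes with restriction to an open set. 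Your route avoids the detour through $(\CC^*)^n$ entirely and makes transparent that nothing beyond the basic adjunction package for open immersions is being used. One cosmetic point: the natural map in your item (iii) for $\varpi_{U\dagger}$ is the unit $\mathrm{id}\to\varpi_U^{-1}\varpi_{U\dagger}$ rather than a co-unit, but since it is an isomorphism this does not affect the argument.
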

\begin{proof}
	The map $j_U$ is an affine open immersion, so $j_{U+}\cong \Rder j_{U*}\cong j_{U*}$. Now, for any open subset $V\subseteq U$ and any sheaf $F$ on $(\CC^*)^n$, one has 
	\[ \Gamma(V, \varpi_U^*\varpi_* F) = \Gamma(V, \varpi_* F) = \Gamma(V\cap(\CC^*)^n, F) = \Gamma(V, j_{U*}F),\]
	so $j_{U*}=\varpi_U^*\varpi_*$, which is isomorphic to $\varpi_U^+\varpi_+$ because $\varpi_U$ is an open immersion and $\varpi$ is an affine open immersion. So, $j_{U+}\cong \varpi_U^+\varpi_+$. Therefore,
	\[ \iota_{U\dagger} 
			= \Ddual_U \iota_{U+} \Ddual_{T_A} 
			\cong \Ddual_U j_{U+}\iota_+\Ddual_{T_A}
			\cong \Ddual_U \varpi_U^+\varpi_+ \iota_+\Ddual_{T_A}
			\cong \varpi_U^+\Ddual_{\widehat{\CC^n}}\varpi_+\iota_+\Ddual_{T_A}.\]
	Since $\Ddual_{\widehat{\CC^n}}\varpi_+\iota_+\Ddual_{T_A}\cong \varphi_\dagger$ and $\varpi_{U+}\varpi_U^+\cong \RGamma_U$, we get the first isomorphism. The second isomorphism follows via duality.
\end{proof}

\subsection{Exceptional and Strongly Exceptional Quasidegrees}\label{subsec:exc}
In this section we introduce the notion of strongly $(A,F)$-exceptional quasidegrees for $F\preceq A$. These are then related in \Cref{prop:EA} to the set 
\[\mathcal{E}_A\coloneqq \bigcup_{i>0} \qdeg H^i(\omega_{S_A}^\bullet)\]
of $A$-exceptional quasidegrees. In \Cref{lem:open-fsupp}, we prove that $\hat{K}^A_\bullet(S_A;E_A-\beta)$ has relatively open fiber support if $\beta\notin \mathcal{E}_A$.

\begin{definition}\label{def:exc-qdeg}
	Given a face $F$, we define the set of \emph{strongly $(A,F)$-exceptional quasidegrees} to be
	\[ \mathcal{E}_{A,F}^{\mathrm{strong}}\coloneqq \bigcup_{i<d_{A/F}} \qdeg \HH^i_{\orbit(F)}(S_A).\]
	When $F=\emptyset$, this is just the set of strongly $A$-exceptional quasidegrees defined in \cite[Definition~2.9]{rw17}. More generally, if $M$ is a graded $R_A$-modules, we define the set of \emph{strongly $(A,F)$-exceptional quasidegrees for $M$} to be
	\[ \mathcal{E}_{A,F}^{\mathrm{strong}}(M)\coloneqq \bigcup_{i<d_{A/F}} \qdeg \HH_{\orbit(F)}^i(M).\]
\end{definition}

\begin{remark}\label{rmk:strong-CM}
	From \Cref{ex:orbitLC}, we know that
	\[ H^i_{\orbit(F)}(S_A) \cong H^i_{\braket{\partial_i|\vec{a}_i\notin F}}(S_A[\partial^{-F}]).\]
	The ideal $\braket{\partial_i|\vec{a}_i\notin F}S_A[\partial^{-F}]$ is the maximal homogeneous ideal of $S_A[\partial^{-F}]$, so $\mathcal{E}_{A,F}^{\mathrm{strong}}=\emptyset$ if and only if the affine semigroup ring $S_A[\partial^{-F}]$ is Cohen--Macaulay.
\end{remark}

\begin{example}
	If $d\leq 2$, then the localization $S_A[\partial^{-F}]$ is Cohen--Macaulay for all faces $F\neq \emptyset$. Therefore, $\mathcal{E}_{A,F}^{\mathrm{strong}} = \emptyset$ for all faces $F\neq \emptyset$.
\end{example}
\begin{example}
	Let
	\[ A = \begin{bmatrix}
		1 &0 &1 &0 &0\\
		0 &1 &0 &1 &0\\
		0 &0 &1 &1 &2
	\end{bmatrix}.\]
	The semigroup $\NN{A}$ is equal to $\NN^3\setminus\Set{(0,0,c)|c \text{ is odd}}$. Let $F\preceq A$ be a face. If $F=A$ or $\CC{F}$ does not contain the $z$-axis, then the semigroup ring $S_A[\partial^{-F}]$ is normal, hence Cohen--Macaulay, and therefore $\mathcal{E}_{A,F}^{\mathrm{strong}}=\emptyset$ by \Cref{rmk:strong-CM}. If $\CC{F}$ equals the $z$-axis, then $H^0_{\orbit(F)}(S_A)$ is zero, so $\mathcal{E}_{A,F}^{\mathrm{strong}} = \qdeg H^1_{\orbit(F)}(S_A)=\CC{F}$. If $F=\emptyset$, then $H^i_{\orbit(F)}(S_A)$ is zero if $i=0$ or $1$, so $\mathcal{E}_{A,F}^{\mathrm{strong}} = \qdeg H^2_{\orbit(F)}(S_A)=\Set{(0,0,c)|c\in \ZZ_{<0}\text{ and }c\text{ is odd}}$.
\end{example}

\begin{proposition}\label{prop:EA}
	$\displaystyle \mathcal{E}_A = \bigcup_{F\preceq A} \mathcal{E}_{A,F}^{\mathrm{strong}}$.
\end{proposition}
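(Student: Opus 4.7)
The plan is to use a graded local-duality identification to express each $\RGamma_{\orbit(F)}(S_A)$ in terms of the Matlis dual of a localization of $\omega_{S_A}^\bullet$, and then to sum over all faces $F$ using \Cref{lem:qdeg-loc}.

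\emph{Step 1 (local duality).} Specializing the chain of isomorphisms in the proof of \Cref{th:exc-restr} to the bounded complex $M^\bullet = \omega_{S_A}^\bullet$, and using $\omega_{S_A}^\bullet \cong \Rdual S_A$ from \eqref{eq:om-vs-DS} together with the involutivity of $\Rdual$ on bounded complexes with finitely generated cohomology, yields
\[ \RGamma_{\orbit(F)}(S_A) \;\cong\; \bigl(\omega_{S_A}^\bullet[\partial^{-F}]\bigr)^{\vee}[-d_{A/F}]. \]
Passing to cohomology and using $H^i(C^{\bullet\vee}) \cong (H^{-i}(C^\bullet))^{\vee}$ gives
\[ H^i_{\orbit(F)}(S_A) \;\cong\; \bigl(H^{d_{A/F}-i}(\omega_{S_A}^\bullet)[\partial^{-F}]\bigr)^{\vee}. \]

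\emph{Step 2 (reindexing and quasidegrees).} Substituting $j = d_{A/F}-i$, the range $i < d_{A/F}$ becomes $j > 0$. Moreover, $H^j(\omega_{S_A}^\bullet)[\partial^{-F}] = 0$ for $j > d_{A/F}$, because only summands $\CC\{\NN G - \NN A\}$ with $G \succeq F$ survive inversion of $\partial^{F}$, forcing $\omega_{S_A}^\bullet[\partial^{-F}]$ to be concentrated in cohomological degrees $[0,\,d_{A/F}]$. Tracking the effect of Matlis duality on graded degrees (and the definition of $\qdeg$ for non-finitely-generated modules via the union over finitely generated submodules), one obtains
\[ \mathcal{E}_{A,F}^{\mathrm{strong}} \;=\; \bigcup_{j>0}\, \qdeg H^j(\omega_{S_A}^\bullet)[\partial^{-F}]. \]

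\emph{Step 3 (union over $F$).} Each $H^j(\omega_{S_A}^\bullet)$ is a finitely generated graded $S_A$-module, since $\omega_{S_A}^\bullet \cong \Rdual S_A$ preserves finite generation of cohomology. Applying \Cref{lem:qdeg-loc} with $\emptyset \preceq F$, one has $\qdeg H^j(\omega_{S_A}^\bullet)[\partial^{-F}] \subseteq \qdeg H^j(\omega_{S_A}^\bullet)$ for every $F$, with equality attained at $F = \emptyset$. Swapping the two unions,
\[ \bigcup_{F \preceq A} \mathcal{E}_{A,F}^{\mathrm{strong}} \;=\; \bigcup_{j>0} \qdeg H^j(\omega_{S_A}^\bullet) \;=\; \mathcal{E}_A. \]

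The main technical hurdle lies in Step~1: one must check that the derivation in the proof of \Cref{th:exc-restr}, stated for a single finitely generated module, extends functorially to the bounded complex $M^\bullet = \omega_{S_A}^\bullet$, so that the derived tensor--Hom manipulations and the commutation of $\RGamma_{\orbit(F)}$ with localization at $\partial^{F}$ remain valid. A secondary subtlety is the careful bookkeeping of graded-degree conventions when passing from the cohomological identification in Step~1 to the quasidegree equality in Step~2, since Matlis duality interacts non-trivially with quasidegrees of non-finitely-generated modules.
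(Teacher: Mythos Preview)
Your Step~2 contains a genuine gap: the displayed equality
\[\mathcal{E}_{A,F}^{\mathrm{strong}} \;=\; \bigcup_{j>0} \qdeg H^j(\omega_{S_A}^\bullet)[\partial^{-F}]\]
is false. Two issues conspire here. First, Matlis duality negates graded degrees (one has $(N^\vee)_\alpha \cong (N_{-\alpha})^*$), introducing a sign you have not tracked. Second, and more fundamentally, the two sides are computed by incompatible mechanisms: on the left, each $H^i_{\orbit(F)}(S_A)$ is $I^A_F$-torsion and an $R_A[\partial^{-F}]$-module, so by \Cref{lem:torsion-qdeg} its quasidegree set equals $\{\beta : (\,\cdot\,)_{\beta+\CC F}\neq 0\}$ with no Zariski closure taken; on the right, $H^j(\omega_{S_A}^\bullet)$ is finitely generated, so its $\qdeg$ \emph{is} a Zariski closure. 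The paper's own $3\times 5$ example following \Cref{def:exc-qdeg} already exhibits the failure at $F=\emptyset$: there $\mathcal{E}_{A,\emptyset}^{\mathrm{strong}}=\{(0,0,c):c\in\ZZ_{<0},\ c\text{ odd}\}$ is a discrete countable set, while $\bigcup_{j>0}\qdeg H^j(\omega_{S_A}^\bullet)$ is the entire $z$-axis (since $H^1(\omega_{S_A}^\bullet)$ is finitely generated with infinitely many true degrees on that line). Neither this set nor its negative matches the left-hand side.

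The paper avoids this trap by never asserting a termwise equality between $\mathcal{E}_{A,F}^{\mathrm{strong}}$ and a quasidegree set of the localized dualizing complex. Instead it argues at the level of irreducible components: each component of $\mathcal{E}_A$ has the form $\beta+\CC F$ for some face $F$, and \Cref{lem:qdeg-loc} shows that such a component survives localization at $\partial^{-F}$, which yields the nonvanishing condition $H^{>0}(\omega_{S_A[\partial^{-F}]}^\bullet)_{\beta+\CC F}\neq 0$ that (up to the local-duality identification you correctly set up in Step~1) characterizes membership in $\mathcal{E}_{A,F}^{\mathrm{strong}}$. The reverse inclusion is handled the same way. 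Your Step~1 is the right starting point; what is missing is precisely this component-by-component matching in place of a direct $\qdeg$ comparison.
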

\begin{proof}
	It suffices to show that
	\begin{equation*}
		\qdeg H^{>0}(\omega_{S_A}^\bullet) = \bigcup_{F\preceq A} \Set{\beta\in \CC^d| H^{>0}(\omega_{S_A[\partial^{-F}]}^\bullet)_{\beta+\CC{F}}\neq0}.
	\end{equation*}
	
	\noindent $\bm{(\subseteq)}$ Let $Z=\CC{F}+\beta$ be an irreducible component of $\qdeg H^{>0}(\omega_{S_A}^\bullet)$. Then by \Cref{lem:qdeg-loc}, $Z$ is also an irreducible component of $\qdeg H^{>0}(\omega_{S_A}^\bullet)[\partial^{-F}]$, and therefore $H^{>0}(\omega_{S_A}^\bullet)[\partial^{-F}]_{\beta+\CC{F}}\neq 0$. Now use that
	\begin{equation}\label{eq:dual-loc}
		H^{>0}(\omega_{S_A}^\bullet)[\partial^{-F}]\cong H^{>0}(\omega_{S_A[\partial^{-F}]}^\bullet).
	\end{equation}
	
	\noindent $\bm{(\supseteq)}$ Suppose $H^{>0}(\omega_{S_A[\partial^{-F}]}^\bullet)_{\beta+\CC{F}}\neq 0$. Then by \eqref{eq:dual-loc} and \Cref{lem:qdeg-loc}, the irreducible component of $\qdeg H^{>0}(\omega_{S_A[\partial^{-F}]}^\bullet)$ containing $\beta$ is also an irreducible component of $\qdeg H^{>0}(\omega_{S_A}^\bullet)$.
\end{proof}

The proof of \Cref{lem:open-fsupp} requires the Ishida complex of an affine semigroup ring, which we now recall.

\begin{definition}\label{def:ishida}
	Let $Q$ be an affine semigroup, $\pi\coloneqq \RR_{\geq 0} Q$ its cone. The \emph{Ishida complex} of $\CC[Q]$ is the complex
	\begin{equation}
		\mho^\bullet_{\CC[Q]} \coloneqq \bigoplus_{\sigma\text{ a face of }\pi} \CC[Q]_\sigma,
	\end{equation}
	where $\CC[Q]_\sigma$ sits in cohomological degree $\dim(\sigma)-\dim(\pi\cap-\pi)$ and denotes the localization of $\CC[Q]$ with respect to the multiplicative system $\Set{t^\alpha|\alpha\in \sigma\cap Q}$. The coboundary maps are the natural localization maps with signs chosen appropriately (for details, see \cite[Def.~13.21]{ms05} or \cite[\S2]{ishida}).
\end{definition}

\begin{lemma}\label{lem:open-fsupp}
	If $\beta\notin \mathcal{E}_A$, then $\fSupp\hat{K}^A_\bullet(S_A;E_A-\beta)$ is open in $X_A$.
\end{lemma}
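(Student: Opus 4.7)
The plan is to recast the question as a combinatorial one about the face poset of $A$, then to verify it via an explicit computation of top local cohomology using the Ishida complex. Since $\hat{K}^A_\bullet(S_A;E_A-\beta)$ is $T_A$-equivariant, its fiber support is a $T_A$-invariant constructible subset of $X_A$, hence a union of orbits, say $\bigsqcup_{F\in S}\orbit(F)$. The closure relation $\orbit(G)\subseteq\overline{\orbit(F)}\iff G\preceq F$ shows that such a union is open in $X_A$ precisely when $S$ is upward closed in the face poset of $A$. By \Cref{th:orbit-restr}, the restriction $i_{\orbit(F)}^+\hat{K}^A_\bullet(S_A;E_A-\beta)$ is a direct sum of (shifted) simple integrable connections on $T_F$, so $F\in S$ if and only if $\RGamma_{\orbit(F)}(S_A)_{\beta+\CC{F}}\not\cong 0$ in the derived category of vector spaces.

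The assumption $\beta\notin\mathcal{E}_A$ simplifies this drastically. By \Cref{prop:EA}, $\beta\notin\mathcal{E}_{A,F}^{\mathrm{strong}}$ for every face $F$, and hence $\beta\notin \qdeg H^i_{\orbit(F)}(S_A)$ for all $i<d_{A/F}$. Because $H^i_{\orbit(F)}(S_A)\cong H^i_{\braket{\partial_j|\vec{a}_j\notin F}}(S_A[\partial^{-F}])$ is simultaneously an $R_A[\partial^{-F}]$-module and $I_F^A$-torsion, \Cref{lem:torsion-qdeg} converts this into the pointwise statement $H^i_{\orbit(F)}(S_A)_{\beta+\CC{F}}=0$ for all $i<d_{A/F}$. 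Thus $\RGamma_{\orbit(F)}(S_A)_{\beta+\CC{F}}$ is concentrated in cohomological degree $d_{A/F}$, and $F\in S\iff H^{d_{A/F}}_{\orbit(F)}(S_A)_{\beta+\CC{F}}\neq 0$.

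The main technical step is to obtain a workable description of this top local cohomology; for this I would use the Ishida complex $\mho^\bullet_{S_A[\partial^{-F}]}=\bigoplus_{G\succeq F}S_A[\partial^{-G}][-d_{G/F}]$ from \Cref{def:ishida}, together with the standard fact (Ishida's theorem, cf.\ \cite{ms05}) that it computes local cohomology of $S_A[\partial^{-F}]$ at its maximal graded ideal $\braket{\partial_j|\vec{a}_j\notin F}$, i.e.\ $\RGamma_{\orbit(F)}(S_A)$. Only $G=A$ contributes to the top cohomological degree $d_{A/F}$, and the incoming differential is the signed sum of the natural localization maps $S_A[\partial^{-G}]\to S_A[\partial^{-A}]$ as $G$ ranges over the facets of $A$ above $F$; a direct calculation in $\ZZ^d$-degree $\alpha$ yields
\[H^{d_{A/F}}_{\orbit(F)}(S_A)_\alpha\neq 0\iff \alpha\in \ZZ^d\text{ and }\alpha\notin \NN{A}+\ZZ{G}\text{ for every facet }G\succeq F\text{ of }A.\]
Adapting Ishida's theorem from pointed semigroups to the non-pointed semigroup $\NN{A}+\ZZ{F}$ is the step I expect to require the most care, though it is essentially routine.

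Given this criterion, upward closure of $S$ is immediate. Suppose $F\in S$, witnessed by $\alpha\in(\beta+\CC{F})\cap\ZZ^d$ avoiding $\NN{A}+\ZZ{G}$ for every facet $G\succeq F$, and let $G_0\succeq F$ be any larger face. Then $\CC{F}\subseteq\CC{G_0}$ gives $\alpha\in\beta+\CC{G_0}$, and the facets $G\succeq G_0$ form a subfamily of the facets $G\succeq F$, so $\alpha$ still avoids all of them. Hence $\alpha$ witnesses $G_0\in S$, so $S$ is upward closed and $\fSupp\hat{K}^A_\bullet(S_A;E_A-\beta)$ is open in $X_A$.
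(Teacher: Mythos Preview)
Your proof is correct and follows the same overall skeleton as the paper's: reduce openness to upward closure of the set $S$ of faces appearing in the fiber support, use the hypothesis $\beta\notin\mathcal{E}_A$ (via \Cref{prop:EA} and \Cref{lem:torsion-qdeg}) to reduce membership in $S$ to a question about the top local cohomology $H^{d_{A/F}}_{\orbit(F)}(S_A)$, and then analyze that top local cohomology via the Ishida complex.

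The difference lies only in this last step. The paper compares the Ishida complexes of $S_A[\partial^{-F}]$ and $S_A[\partial^{-G}]$ for $F\preceq G$ via a short exact sequence; since both complexes agree in top degree, the long exact sequence yields a surjection between the top local cohomology modules, and then the general fact $\qdeg(\text{quotient})\subseteq\qdeg(\text{source})$ gives the needed inclusion of quasidegree sets. You instead compute the top local cohomology outright, obtaining the explicit criterion
\[
H^{d_{A/F}}_{\orbit(F)}(S_A)_\alpha\neq 0 \iff \alpha\in\ZZ^d\text{ and }\alpha\notin\NN{A}+\ZZ{G}\text{ for every facet }G\succeq F,
\]
and then verify upward closure combinatorially. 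Your route is slightly more hands-on but yields more: the explicit formula is exactly what specializes, for normal $S_A$, to the description in \Cref{lem:normal}\ref{lem:normal.orbit-fsupp}. The paper's route is a bit cleaner in that it never needs to unwind what the top cohomology actually is. Your caveat about adapting Ishida's theorem to the non-pointed semigroup $\NN{A}+\ZZ{F}$ is already handled by \Cref{def:ishida}, which is stated for arbitrary affine semigroups.
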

\begin{proof}
	By \Cref{th:orbit-restr}, the orbit-cone correspondence, and \Cref{prop:EA}, it suffices to prove the following: For all faces $F\preceq G \preceq A$,
	\begin{equation*}
		\qdeg H^{d_{A/F}}_{\orbit(F)}(S_A) \subseteq \qdeg H^{d_{A/F}}_{\orbit(G)}(S_A).
	\end{equation*}
	
	To prove this, consider the short exact sequence of complexes
	\begin{equation}\label{eq:ishida-seq}
		0 \to \mho^\bullet_{S_A[\partial^{-F}]}[d_{G/F}] \to \mho^\bullet_{S_A[\partial^{-G}]} \to C^\bullet \to 0,
	\end{equation}
	where the first two complexes are the Ishida complexes of $S_A[\partial^{-F}]$ and $S_A[\partial^{-G}]$, respectively, the first map is the natural inclusion, and the third complex is the cokernel. Since the Ishida complex of $S_F[\partial^{-F}]$ represents $\RGamma_{\orbit(F)}(S_A[\partial^{-F}]) = \RGamma_{\orbit(F)}(S_A)$ (and similarly for $G$), the long exact sequence in cohomology gives an exact sequence
	\[ \HH^{d_{A/F}}_{\orbit(F)}(S_A[\partial^{-F}]) \to \HH^{d_{A/G}}_{\orbit(G)}(S_A[\partial^{-G}]) \to H^{d_{A/G}}(C^\bullet) \to 0.\]
	But the first two complexes in \eqref{eq:ishida-seq} are both equal to $S_A[\partial^{-A}]$ in cohomological degree $d_{A/G}$, so $H^{d_{A/G}}(C^\bullet) = 0$.	Now use that if $M$ is a graded quotient of a graded module $N$, then $\qdeg M\subseteq \qdeg N$.
\end{proof}

\subsection{Contiguity}\label{subsec:contiguity}
In this subsection we discuss how right multiplication by a monomial of $R_A$ (a ``contiguity'' operator) affects the restrictions and exceptional restrictions, respectively, of an Euler--Koszul complex to orbits. 

\begin{lemma}\label{lem:mult-by-alpha}
	Let $F\preceq A$ be a face, and let $M$ be a finitely-generated $\ZZ^d$-graded $S_A$-submodule of $\CC[\ZZ^d]$. Let $\beta\in \CC^d$ and $\alpha\in \NN{A}$. Assume that $\beta,\beta-\alpha\notin \mathcal{E}_{A,F}^{\mathrm{strong}}(M)$. Then the following are equivalent: 
	\begin{enumerate}[label=\textnormal{(\alph*)}]
		\item The morphism $i^+_{\orbit(F)}\hat{K}^A_\bullet(M;E_A-\beta+\alpha)\to i^+_{\orbit(F)}\hat{K}^A_\bullet(M;E_A-\beta)$ induced by right-multiplication by $\partial^\alpha$ is an isomorphism.
		\item\label{item:forall-lambda} For all $\lambda\in \CC{F}$,
		\[ \RGamma_{\orbit(F)}(M)_{\beta-\alpha-\lambda}\neq 0 \text{ if and only if } \RGamma_{\orbit(F)}(M)_{\beta-\lambda}\neq 0.\]
	\end{enumerate}
\end{lemma}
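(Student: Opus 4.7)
The plan is to apply Theorem~\ref{th:orbit-restr} to both Euler--Koszul complexes and exploit its functoriality in $M$ to identify the morphism of (a) with the map
\[ \RGamma_{\orbit(F)}(M)_{(\beta - \alpha) + \CC F} \otimes_\CC \Bigwedge\CC^{d_{A/F}} \to \RGamma_{\orbit(F)}(M)_{\beta + \CC F} \otimes_\CC \Bigwedge\CC^{d_{A/F}} \]
induced by right-multiplication by $\partial^\alpha$ (which sends degree $\beta - \alpha - \lambda$ to degree $\beta - \lambda$ for $\lambda \in \CC F$). Because each $H^i_{\orbit(F)}(M)$ is both $I_F^A$-torsion and an $R_A[\partial^{-F}]$-module, Lemma~\ref{lem:torsion-qdeg} together with the hypothesis $\beta, \beta - \alpha \notin \mathcal{E}_{A,F}^{\mathrm{strong}}(M)$ forces $H^i_{\orbit(F)}(M)_\gamma = 0$ for all $i < d_{A/F}$ and all $\gamma \in (\beta + \CC F) \cup ((\beta - \alpha) + \CC F)$. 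The displayed map thus reduces to a morphism between graded pieces of the single module $N \coloneqq H^{d_{A/F}}_{\orbit(F)}(M)$, and (a) becomes equivalent to the condition that, for every $\lambda \in \CC F$, right-multiplication by $\partial^\alpha$ induces an isomorphism $N_{\beta - \alpha - \lambda} \to N_{\beta - \lambda}$. The direction (a) $\Rightarrow$ (b) is then immediate, since an isomorphism is nonzero iff either side is.

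For (b) $\Rightarrow$ (a), the key structural fact is that $M \subseteq \CC[\ZZ^d]$ has at most one-dimensional graded pieces, and this property is inherited by every term in the Čech complex computing $\RGamma_{\braket{\partial_i|\vec{a}_i\notin F}}(M[\partial^{-F}])$: using $\ZZ A = \ZZ^d$, the top Čech term is identified with $\CC[\ZZ^d]$ itself, and every lower term is an $S_A$-submodule of $\CC[\ZZ^d]$. Consequently, $N$ is realized as a quotient of $\CC[\ZZ^d]$ by an $S_A$-submodule, so every graded piece $N_\gamma$ is $0$- or $1$-dimensional. Since right-multiplication by $\partial^\alpha$ on $\CC[\ZZ^d]$ is multiplication by $t^\alpha$, an isomorphism on every graded piece, the induced map on any graded piece of $N$ is an isomorphism precisely when source and target have the same dimension. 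Under the $\leq 1$-dimensionality, this is exactly the simultaneous-nonvanishing condition of (b).

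The main obstacle will be the structural analysis in the second paragraph --- verifying carefully that $N$ inherits the $\leq 1$-dimensional-graded-pieces property from $M$ through the Čech complex, and that right-multiplication by $\partial^\alpha$ on $N$ matches the quotient of the shift map on $\CC[\ZZ^d]$. Once this is in place, the equivalence (a) $\iff$ (b) reduces to a short linear-algebraic dimension comparison.
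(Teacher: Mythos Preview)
Your overall strategy matches the paper's: reduce via \Cref{th:orbit-restr} and the non-exceptionality hypothesis to the map $f_\lambda\colon N_{\beta-\alpha-\lambda}\to N_{\beta-\lambda}$ on the single module $N=H^{d_{A/F}}_{\orbit(F)}(M)$, then combine surjectivity of $f_\lambda$ with the fact that the Hilbert function of $N$ takes values in $\{0,1\}$.

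There is, however, a genuine gap in your structural argument. The \v{C}ech complex on the generators $\{\partial_i\mid \vec a_i\notin F\}$ has length $n-|F|$, which is typically strictly larger than $d_{A/F}$; its top term is indeed $M[\partial^{-A}]\cong\CC[\ZZ^d]$, but that sits in cohomological degree $n-|F|$, not $d_{A/F}$. So $N$ is a \emph{subquotient} of the degree-$d_{A/F}$ term (a direct sum of several localizations), and the inference ``$N$ is a quotient of $\CC[\ZZ^d]$'' does not follow from the \v{C}ech picture as written. The fix is to replace the \v{C}ech complex by the Ishida complex $\mho^\bullet_{S_A[\partial^{-F}]}$ of \Cref{def:ishida} tensored with $M[\partial^{-F}]$: this is a flat resolution of $\RGamma_{\orbit(F)}(S_A)$ of length exactly $d_{A/F}$, and its top term is $M[\partial^{-A}]\cong\CC[\ZZ^d]$, so $N$ really is the cokernel of the last differential and hence a quotient of $\CC[\ZZ^d]$. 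With that in place, your surjectivity and $\{0,1\}$-Hilbert-function claims both go through.

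For comparison, the paper obtains surjectivity of $f_\lambda$ differently: it applies the long exact sequence in local cohomology to $0\to M(-\alpha)\xrightarrow{\partial^\alpha} M\to M/\partial^\alpha M\to 0$ and uses that $\partial^\alpha$ is $M$-regular (since $M\subseteq\CC[\ZZ^d]$), so $\dim(M/\partial^\alpha M)[\partial^{-F}]<d_{A/F}$ and $H^{d_{A/F}}_{\orbit(F)}(M/\partial^\alpha M)=0$. This avoids any explicit presentation of $N$, though the paper still invokes the $\{0,1\}$ Hilbert function of $N$ at the end (for which your Ishida-based justification is in fact a welcome addition).
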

\begin{proof}
	By \Cref{th:orbit-restr} and because neither $\beta$ nor $\beta-\alpha$ are strongly $(A,F)$-exceptional for $M$, it suffices to show that the morphism $f_\lambda \colon H^{d_{A/F}}_{\orbit(F)}(M(-\alpha))_{\beta-\lambda}\to \HH^{d_{A/F}}_{\orbit(F)}(M)_{\beta-\lambda}$ induced by multiplication by $\partial^\alpha$ is an isomorphism for all $\lambda\in \CC{F}$ if and only if \ref{item:forall-lambda}. 
	
	The ``only if'' direction is immediate. For the ``if'' direction, the long exact sequence of local cohomology gives an exact sequence
	\[ \HH^{d_{A/F}}_{\orbit(F)}(M(-\alpha))_{\beta-\lambda}\xrightarrow{\,f_\lambda \,} \HH^{d_{A/F}}_{\orbit(F)}(M)_{\beta-\lambda} \to  \HH^{d_{A/F}}_{\orbit(F)}(M/\partial^\alpha M)_{\beta-\lambda}\to 0.\]
	But $\dim (M/\partial^\alpha M) < d_{A/F}$ because $M$ is finitely generated and $\partial^\alpha$ is $M$-regular. So, $\HH^{d_{A/F}}_{\orbit(F)}(M/\partial^\alpha M)_{\beta-\lambda}=0$, and therefore $f_\lambda$ is always surjective. Moreover, because the Hilbert function of $H_{\orbit(F)}^{d_{A/F}}(M)$ takes values in $\{0,1\}$, the hypothesis \ref{item:forall-lambda} implies that both the domain and codomain of $f_\lambda$ have dimension $1$. Therefore, $f_\lambda$ is an isomorphism for all $\lambda$.
\end{proof}

%
\begin{lemma}\label{lem:om-mult-by-alpha}
	Let $F\preceq A$ be a face, and let $M$ be a finitely-generated $\ZZ^d$-graded $S_A$-submodule of $\CC[\ZZ^d]$. The following are equivalent for $\beta\in \CC^d$ and $\alpha\in \NN{A}$:
	\begin{enumerate}[label=\textnormal{(\alph*)}]
		\item The morphism $i^\dagger_{\orbit(F)}\hat{K}^A_\bullet(M;E_A-\beta)\to i^\dagger_{\orbit(F)}\hat{K}^A_\bullet(M;E_A-\beta-\alpha)$ induced by right-multiplication by $\partial^\alpha$ is an isomorphism.
		\item\label{item:exc-forall-lambda} For all $\lambda\in \CC{F}$,
		\[ M[\partial^{-F}]_{\beta-\lambda}\neq 0 \text{ if and only if }M[\partial^{-F}]_{\beta+\alpha-\lambda}\neq 0.\]
	\end{enumerate}
\end{lemma}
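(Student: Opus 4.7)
The proof would follow the pattern of \Cref{lem:mult-by-alpha}, with \Cref{th:exc-restr} playing the role that \Cref{th:orbit-restr} played there. First, I would view right-multiplication by $\partial^\alpha$ as a degree-$0$ morphism of $\ZZ^d$-graded $S_A$-modules $M \to M(\alpha)$. Via the shift identity \eqref{eq:ek-deg-shift}, this induces (up to a $\ZZ^d$-graded twist that is irrelevant for the underlying $D$-module) the morphism $\hat{K}^A_\bullet(M; E_A - \beta) \to \hat{K}^A_\bullet(M; E_A - \beta - \alpha)$ appearing in the statement. Applying $i^\dagger_{\orbit(F)}$ and invoking the functoriality of \Cref{th:exc-restr}, the map then splits as a direct sum over cosets $\lambda + \ZZ{F} \in \CC{F}/\ZZ{F}$ of maps
\[ \mathcal{O}_{T_F}^\lambda \otimes_\CC M[\partial^{-F}]_{\beta - \lambda} \otimes_\CC \Bigwedge\CC^{d_{A/F}} \longrightarrow \mathcal{O}_{T_F}^\lambda \otimes_\CC M[\partial^{-F}]_{\beta + \alpha - \lambda} \otimes_\CC \Bigwedge\CC^{d_{A/F}}, \]
each given by multiplication by $\partial^\alpha$ on the middle tensor factor. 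Hence the original morphism is an isomorphism if and only if every such multiplication-by-$\partial^\alpha$ map is.

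Next, I would exploit the standing hypothesis $M \subseteq \CC[\ZZ^d]$: the localization $M[\partial^{-F}]$ is then a graded submodule of $\CC[\ZZ^d][t^{-F}] = \CC[\ZZ^d]$, whose graded pieces are all one-dimensional. So each $M[\partial^{-F}]_\gamma$ has dimension at most one, and multiplication by $\partial^\alpha$ acts there as multiplication by the invertible monomial $t^\alpha$ and is therefore injective. An injective linear map between spaces of dimension at most one is an isomorphism precisely when both the source and target vanish, or both are nonzero. Applying this criterion at $\gamma = \beta - \lambda$ and $\gamma = \beta + \alpha - \lambda$ for each $\lambda \in \CC{F}$ recovers condition \ref{item:exc-forall-lambda} exactly.

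The only delicate point is verifying that the map between the direct-sum decompositions supplied by \Cref{th:exc-restr} is indeed the one described above, namely multiplication by $\partial^\alpha$ on each $\lambda$-summand; this is a bookkeeping exercise in tracking the $\ZZ^d$-grading through the isomorphism of \Cref{th:exc-restr}, and because multiplication by $\partial^\alpha$ is grading-homogeneous no new ideas are needed. Notice that, in contrast to \Cref{lem:mult-by-alpha}, no strongly exceptional quasidegree hypothesis is required here: the one-dimensionality of graded pieces is automatic from $M \subseteq \CC[\ZZ^d]$, obviating the spectral-sequence-style surjectivity argument that was needed in the local-cohomology case.
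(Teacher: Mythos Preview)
Your proposal is correct and follows essentially the same route as the paper: reduce via \Cref{th:exc-restr} to the graded-piece maps $M[\partial^{-F}]_{\beta-\lambda}\to M[\partial^{-F}]_{\beta+\alpha-\lambda}$, observe that multiplication by $\partial^\alpha$ is injective because $M\subseteq\CC[\ZZ^d]$ (equivalently, $\partial^\alpha$ is $M$-regular), and conclude from the Hilbert function taking values in $\{0,1\}$. Your closing remark that no strongly-exceptional-quasidegree hypothesis is needed here, unlike in \Cref{lem:mult-by-alpha}, is also on the mark.
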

\begin{proof}
	By \Cref{th:exc-restr}, it suffices to show that, as with \Cref{lem:mult-by-alpha}, the morphism $f_\lambda\colon M[\partial^{-F}]_{\beta-\lambda} \to M(\alpha)[\partial^{-F}]_{\beta-\lambda}$ induced by multiplication by $\partial^\alpha$  is an isomorphism for all $\lambda\in \CC{F}$ if and only if \ref{item:exc-forall-lambda}.
	
	As before, the ``only if'' direction is immediate. For the ``if'' direction, $\partial^\alpha$ is $M$- (and therefore $M[\partial^{-F}]$-) regular, so $f_\lambda$ is always injective. Now proceed as in \Cref{lem:mult-by-alpha} using the fact that the Hilbert function of $M[\partial^{-F}]$ takes values in $\{0,1\}$.
\end{proof}

\subsection{Main Theorems}\label{subsec:main-th}

\begin{definition}\label{def:E+dag}
	Given a face $F$ and a parameter $\beta\in \CC^d$, define the sets
	\[ \mathrm{E}^*_F(\beta) \coloneqq \Set{\lambda\in \CC{F}/\ZZ{F} | \RGamma_{\orbit(F)}(S_A)_{\beta-\lambda}\neq 0}\]
	and
	\[ \mathrm{E}_F(\beta) \coloneqq \Set{\lambda \in \CC{F}/\ZZ{F} | S_A[\partial^{-F}]_{\beta-\lambda}\neq 0}.\]
	Because $S_A[\partial^{-F}]\cong \CC\{\NN{A}-\NN{F}\}$, the second set is the set $E_F(\beta)$ defined by Saito in \cite{Sai01}.
\end{definition}

\begin{remark}\label{rmk:supps}
	The definitions of $\mathrm{E}_F(\beta)$ and $\mathrm{E}^*_F(\beta)$ along with \Cref{th:orbit-restr,th:exc-restr} show that for all $\beta\in \CC^d$,
	\begin{align*}
		\fSupp \hat{K}^A_\bullet(S_A; E_A-\beta) &= \bigsqcup_{\mathrm{E}^*_F(\beta)\neq \emptyset} \orbit(F)\\
		\intertext{and}
		\cofSupp \hat{K}^A_\bullet(S_A; E_A-\beta) &= \bigsqcup_{\mathrm{E}_F(\beta)\neq \emptyset} \orbit(F).
	\end{align*}
\end{remark}

\begin{remark}
	Let $\beta\in \CC^d$ and $F\precneqq A$. Suppose $\lambda+\ZZ{F}\in \mathrm{E}_F(\beta)$, so that $\beta-\lambda\in \NN{A}-\NN{F}$. Then $H^i_{\orbit(F)}(S_A)_{\beta-\lambda}$ is isomorphic to the reduced cohomology $\tilde{H}^i(\mathcal{P};\CC)$ of a (nonempty) convex polytope $\mathcal{P}$ (cf.~\cite[Rmk.~13.25 and Cor.~13.26]{ms05}). As convex polytopes are contractible, this cohomology vanishes, and therefore $\lambda+\ZZ F\notin \mathrm{E}_F^*(\beta)$. In other words,
	\[ \mathrm{E}_F(\beta)\cap \mathrm{E}_F^*(\beta) = \emptyset.\]		
\end{remark}

\medskip
Before continuing, we state a small lemma about $\mathrm{E}^*_F(\beta)$ and $\mathrm{E}_F(\beta)$. Parts \ref{lem:stability.EF*} and \ref{lem:stability.EF} follow from \Cref{lem:mult-by-alpha} and \Cref{lem:om-mult-by-alpha}, respectively. Note that \ref{lem:stability.EF} is also \cite[Prop.~2.2 (5)]{Sai01}.

\begin{lemma}\label{lem:stability}
	Let $\beta\in \CC^d$ and $\alpha\in \NN{A}$.
	\begin{enumerate}[label=\textnormal{(\alph*)}]
		\item\label{lem:stability.EF*} If $\beta,\beta-\alpha\notin\mathcal{E}_{A,F}^{\mathrm{strong}}$, then $\mathrm{E}^*_F(\beta)\subseteq \mathrm{E}^*_F(\beta-\alpha)$.
		\item\label{lem:stability.EF} $\mathrm{E}_F(\beta) \subseteq \mathrm{E}_F(\beta+\alpha)$.
	\end{enumerate}
\end{lemma}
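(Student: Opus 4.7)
My plan for part \ref{lem:stability.EF*} is to lift the surjectivity hiding inside the proof of \Cref{lem:mult-by-alpha}, rather than to invoke the full biconditional of that lemma. Specialized to $M=S_A$, the argument given there produces, for each $\lambda\in \CC{F}$, an exact sequence
\[ H^{d_{A/F}}_{\orbit(F)}(S_A)_{\beta-\alpha-\lambda}\xrightarrow{\,\cdot\partial^\alpha\,} H^{d_{A/F}}_{\orbit(F)}(S_A)_{\beta-\lambda}\to 0,\]
so nonvanishing of the target forces nonvanishing of the source.

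To connect this back to $\mathrm{E}^*_F$, I would then apply \Cref{lem:torsion-qdeg} to the $I_F^A$-torsion, $R_A[\partial^{-F}]$-module $H^i_{\orbit(F)}(S_A)$ for each $i<d_{A/F}$. The hypothesis $\beta,\beta-\alpha\notin\mathcal{E}_{A,F}^{\mathrm{strong}}$ translates into the vanishing of $H^i_{\orbit(F)}(S_A)_{\beta-\lambda}$ and $H^i_{\orbit(F)}(S_A)_{\beta-\alpha-\lambda}$ for all such $i$ and all $\lambda\in \CC{F}$; equivalently, both $\RGamma_{\orbit(F)}(S_A)_{\beta-\lambda}$ and $\RGamma_{\orbit(F)}(S_A)_{\beta-\alpha-\lambda}$ are concentrated in cohomological degree $d_{A/F}$. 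Combined with the surjection above, this delivers $\mathrm{E}^*_F(\beta)\subseteq \mathrm{E}^*_F(\beta-\alpha)$.

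Part \ref{lem:stability.EF} is simpler. Since $S_A[\partial^{-F}]$ embeds in the integral domain $\CC[\ZZ^d]$, the monomial $\partial^\alpha$ is a non-zero-divisor on $S_A[\partial^{-F}]$, so right-multiplication by $\partial^\alpha$ injects $S_A[\partial^{-F}]_{\beta-\lambda}$ into $S_A[\partial^{-F}]_{\beta+\alpha-\lambda}$. This is exactly the forward direction of \ref{item:exc-forall-lambda} in \Cref{lem:om-mult-by-alpha} (applied with $M=S_A$), and it immediately yields $\mathrm{E}_F(\beta)\subseteq \mathrm{E}_F(\beta+\alpha)$.

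I do not anticipate any serious obstacle here. Both containments use only one half (surjectivity and injectivity, respectively) of the contiguity analyses in \Cref{lem:mult-by-alpha,lem:om-mult-by-alpha}; the sole mild subtlety, in part \ref{lem:stability.EF*}, is confirming that the strong non-exceptionality hypothesis suffices to replace $\RGamma_{\orbit(F)}(S_A)$ by its top cohomology, which \Cref{lem:torsion-qdeg} accomplishes.
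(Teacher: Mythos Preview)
Your proposal is correct and matches the paper's approach. The paper's ``proof'' is a single sentence pointing to \Cref{lem:mult-by-alpha} and \Cref{lem:om-mult-by-alpha}; you have correctly identified that only one half of each equivalence is needed (the always-surjective $f_\lambda$ from the proof of \Cref{lem:mult-by-alpha} for part~\ref{lem:stability.EF*}, and the always-injective $f_\lambda$ from the proof of \Cref{lem:om-mult-by-alpha} for part~\ref{lem:stability.EF}), together with the use of \Cref{lem:torsion-qdeg} to pass from the hypothesis $\beta,\beta-\alpha\notin\mathcal{E}_{A,F}^{\mathrm{strong}}$ to vanishing of the lower local cohomology in the relevant graded pieces.
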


\begin{definition}\label{def:mgm}
	\begin{enumerate}
		\item A parameter $\beta\in \CC^d$ is \emph{mixed Gauss--Manin along the face $F\preceq A$} if either $\mathrm{E}_F(\beta)=\emptyset$ or there exists a $\beta'\in \CC^d\setminus \sRes(A)$ with $\beta-\beta'\in \ZZ^d$ such that $\mathrm{E}_F(\beta) = \mathrm{E}_F(\beta')$. A parameter $\beta\in \CC^d$ is \emph{mixed Gauss--Manin} if it is mixed Gauss--Manin along every face.
		
		\item A parameter $\beta\in \CC^d$ is \emph{dual mixed Gauss--Manin along the face $F\preceq A$} if $\beta\notin \mathcal{E}_A$ and if either $\mathrm{E}^*_F(\beta)=\emptyset$ or there exists a $-\beta'\in \CC^d\setminus\sRes(A)$ with $\beta-\beta'\in \ZZ^d$ such that $\mathrm{E}^*_F(\beta) = -\mathrm{E}_F(-\beta')$. A parameter $\beta\in \CC^d$ is \emph{dual mixed Gauss--Manin} if it is dual mixed Gauss--Manin along every face.
	\end{enumerate}
\end{definition}

\begin{remark}
	The proof of \Cref{lem:mult-by-alpha} shows that, at least if $\mathcal{E}_{A,F}^{\mathrm{strong}}=\emptyset$, the condition of being dual mixed Gauss--Manin along $F$ is partially stable in the following sense: If $\beta$ is dual mixed Gauss--Manin along $F$ with $\mathrm{E}^*_F(\beta)\neq\emptyset$, then $\beta-\alpha$ is also dual mixed Gauss--Manin along $F$ for every $\alpha\in \NN{A}$. Similarly, the proof of \Cref{lem:om-mult-by-alpha} shows that if $\beta$ is mixed Gauss--Manin along $F$ with $\mathrm{E}_F(\beta)\neq \emptyset$, then $\beta+\alpha$ is also mixed Gauss--Manin along $F$ for every $\alpha\in \NN{A}$.
\end{remark}

Before stating \Cref{th:plus-dag,th:dag-plus}, we recall the following notation and definitions:
\begin{itemize}
	\item $\fSupp$ and $\cofSupp$ denote fiber support and cofiber support, respectively, and were defined in \eqref{def:fiber-support}. 
	\item $\widehat{\CC^n}\coloneqq \Spec\CC[\partial_1,\ldots,\partial_n]$.
	\item For an open subset $U\subseteq \widehat{\CC^n}$ containing $T_A$, the embeddings $T_A\hookrightarrow U$ and $U\hookrightarrow \widehat{\CC^n}$ are denoted by $\iota_U$ and $\varpi_U$, respectively---these were discussed at the start of \S\ref{sec:dirims}.
\end{itemize}

\begin{theorem}\label{th:plus-dag}
	The following are equivalent for $\beta\in\CC^d$:
	\begin{enumerate}[label=\textnormal{(\alph*)}]
		\item\label{th:plus-dag-cond} $\beta$ is dual mixed Gauss--Manin.
		\item\label{th:plus-dag-some} $K^A_\bullet(S_A; E_A-\beta) \qi \FL(\varpi_{U+}\iota_{U\dagger}\mathcal{O}_{T_A}^\beta)$ for some open subset $U\subseteq \widehat{\CC^n}$ containing $T_A$.
		\item\label{th:plus-dag-any} $K^A_\bullet(S_A; E_A-\beta) \qi \FL(\varpi_{U+}\iota_{U\dagger}\mathcal{O}_{T_A}^\beta)$ for any open subset $U\subseteq \widehat{\CC^n}$ satisfying $U\cap X_A=\fSupp\hat{K}^A_\bullet(S_A; E_A-\beta)$.
	\end{enumerate}
\end{theorem}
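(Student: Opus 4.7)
The plan is to reduce the equivalence to a statement about Euler--Koszul complexes via \Cref{lem:dag-plus,prop:phi-dagger} and then check quasi-isomorphisms orbit by orbit using \Cref{th:orbit-restr}. \Cref{lem:dag-plus} gives $\varpi_{U+}\iota_{U\dagger}\mathcal{O}_{T_A}^\beta\cong\RGamma_U\,\varphi_\dagger\mathcal{O}_{T_A}^\beta$, and \Cref{prop:phi-dagger} identifies the latter with $\RGamma_U\,\hat{K}^A_\bullet(S_A;E_A-\beta+k\varepsilon_A)$ once $k$ is chosen so large that $-\beta+k\varepsilon_A\notin\sRes(A)$ and $\beta-k\varepsilon_A\notin\mathcal{E}_A$. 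The second condition holds for $k\gg 0$ because $\mathcal{E}_A$ is a finite union of translates of $\CC F$ for proper faces $F$ (by \Cref{prop:EA,lem:torsion-qdeg}) and $\varepsilon_A$ lies in the relative interior of $\NN{A}$, hence in no such $\CC F$. All three conditions then translate to assertions about the quasi-isomorphism
\[
\hat{K}^A_\bullet(S_A;E_A-\beta)\qi\RGamma_U\,\hat{K}^A_\bullet(S_A;E_A-\beta+k\varepsilon_A).
\]

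For \textbf{(a)\,$\Rightarrow$\,(c)}, assume $\beta$ is dual mixed Gauss--Manin. Since $\beta\notin\mathcal{E}_A$, \Cref{lem:open-fsupp,rmk:supps} show that $\fSupp\hat{K}^A_\bullet(S_A;E_A-\beta)=\bigsqcup_{\mathrm{E}^*_F(\beta)\neq\emptyset}\orbit(F)$ is open in $X_A$, so valid $U$ exist; fix one. Since $\fSupp\subseteq U$, \Cref{prop:lc-vs-fSupp} makes the adjunction $\hat{K}^A_\bullet(S_A;E_A-\beta)\to\RGamma_U\hat{K}^A_\bullet(S_A;E_A-\beta)$ a quasi-isomorphism, so it suffices to show that the contiguity morphism $\hat{K}^A_\bullet(S_A;E_A-\beta+k\varepsilon_A)\to\hat{K}^A_\bullet(S_A;E_A-\beta)$ induced (via \eqref{eq:ek-deg-shift}) by the injection $\partial^{kA}\colon S_A(-k\varepsilon_A)\hookrightarrow S_A$ becomes a quasi-isomorphism after $\RGamma_U$. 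Its cone $\hat{K}^A_\bullet(S_A/\partial^{kA}S_A;E_A-\beta)$ is supported on $X_A\setminus T_A$, so by \Cref{prop:lc-vs-fSupp} one need only verify that its fiber support vanishes on each $\orbit(F)\subsetneq T_A$ meeting $U$, equivalently $\mathrm{E}^*_F(\beta)\neq\emptyset$. For such $F$, \Cref{lem:mult-by-alpha} (whose hypothesis $\beta,\beta-k\varepsilon_A\notin\mathcal{E}_{A,F}^{\mathrm{strong}}$ follows from \Cref{prop:EA}) reduces this to the equality $\mathrm{E}^*_F(\beta-k\varepsilon_A)=\mathrm{E}^*_F(\beta)$. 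Compatibility of \Cref{cor:phi-dagger} with \Cref{prop:phi-dagger}, translated orbit by orbit via \Cref{th:orbit-restr,rmk:face-LC-om}, yields the automatic identity $\mathrm{E}^*_F(\beta-k\varepsilon_A)=-\mathrm{E}_F(-\beta+k\varepsilon_A)$ whenever $-\beta+k\varepsilon_A\notin\sRes(A)$; combined with the DMGM identity $\mathrm{E}^*_F(\beta)=-\mathrm{E}_F(-\beta')$ and the stabilization of $\mathrm{E}_F(-\beta'')$ under $\NN{A}$-shifts (\Cref{lem:stability}\ref{lem:stability.EF}), the two sides coincide for $k\gg 0$.

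For \textbf{(b)\,$\Rightarrow$\,(a)}, apply $i^+_{\orbit(F)}$ to the isomorphism. \Cref{th:orbit-restr} expresses the LHS as $\RGamma_{\orbit(F)}(S_A)_{\beta+\CC F}\otimes\Bigwedge\CC^{d_{A/F}}$, while for the RHS---picking $\beta'\equiv\beta\pmod{\ZZ^d}$ with $-\beta'\notin\sRes(A)$---\Cref{cor:phi-dagger,th:orbit-restr,rmk:face-LC-om} give either $0$ (when $\orbit(F)\not\subseteq U$) or $\CC\{\NN{F}-\NN{A}\}_{\beta'+\CC F}[-d_{A/F}]\otimes\Bigwedge\CC^{d_{A/F}}$. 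In the first case the LHS must vanish, forcing $\mathrm{E}^*_F(\beta)=\emptyset$ and hence $\beta\notin\mathcal{E}_{A,F}^{\mathrm{strong}}$ automatically. In the second, matching nonzero $\mathcal{O}_{T_F}^\lambda$-summands extracts the DMGM identity $\mathrm{E}^*_F(\beta)=-\mathrm{E}_F(-\beta')$, while the cohomological concentration of the RHS in degree $d_{A/F}$ forces $H^i_{\orbit(F)}(S_A)_{\beta+\CC F}=0$ for $i<d_{A/F}$, i.e., $\beta\notin\mathcal{E}_{A,F}^{\mathrm{strong}}$. Combining over $F$ and applying \Cref{prop:EA} yields (a). Finally, \textbf{(c)\,$\Rightarrow$\,(b)} is immediate once a valid $U$ exists. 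The main obstacle is the stabilization step in (a)\,$\Rightarrow$\,(c): DMGM only asserts existence of \emph{some} $\beta'$, but the contiguity morphism dictates the canonical shift $\beta'=\beta-k\varepsilon_A$; transferring DMGM to this specific shift requires pushing $\beta'$ deep into $-\NN{A}$ until $\mathrm{E}_F(-\beta')$ stabilizes under the monotonicity in \Cref{lem:stability}\ref{lem:stability.EF}.
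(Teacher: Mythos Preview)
Your overall architecture is the same as the paper's: translate via \Cref{lem:dag-plus} and \Cref{prop:phi-dagger} to a statement about $\RGamma_U$ of an Euler--Koszul complex, and then test orbit by orbit with \Cref{th:orbit-restr}. The implication (b)$\Rightarrow$(a) and the trivial (c)$\Rightarrow$(b) are handled correctly and match the paper.

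The gap is precisely at the point you flag as ``the main obstacle'' in (a)$\Rightarrow$(c). You need $\mathrm{E}^*_F(\beta-k\varepsilon_A)=\mathrm{E}^*_F(\beta)$, and you reduce this (correctly) to $\mathrm{E}_F(-\beta+k\varepsilon_A)=\mathrm{E}_F(-\beta')$, where $\beta'$ is a dual-mixed-Gauss--Manin witness. Your proposed resolution, ``pushing $\beta'$ deep into $-\NN{A}$ until $\mathrm{E}_F(-\beta')$ stabilizes under \Cref{lem:stability}\ref{lem:stability.EF},'' does not work as stated: \Cref{lem:stability}\ref{lem:stability.EF} only gives the inclusion $\mathrm{E}_F(\gamma)\subseteq \mathrm{E}_F(\gamma+\alpha)$ for $\alpha\in\NN{A}$, and there is no reason the difference $(\beta'-\beta)+k\varepsilon_A$ lies in $\NN{A}$ (the definition of dual mixed Gauss--Manin only requires $\beta-\beta'\in\ZZ^d$, and $\NN{A}$ need not be saturated). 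Even when the increasing chain $\mathrm{E}_F(-\beta+k\varepsilon_A)$ does stabilize inside the finite set $[(\beta+\ZZ^d)\cap\CC{F}]/\ZZ{F}$, nothing in your argument pins the stable value to $\mathrm{E}_F(-\beta')$.

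The clean fix, which is what the paper is implicitly using when it invokes \Cref{lem:mult-by-alpha}, avoids monotonicity entirely. Both $-\beta'$ and $-\beta+k\varepsilon_A$ lie outside $\sRes(A)$ and are congruent to $-\beta$ modulo $\ZZ^d$. Hence by \cite[Corollary~3.7]{SW09},
\[
\hat{K}^A_\bullet(S_A;E_A+\beta')\;\cong\;\varphi_+\mathcal{O}_{T_A}^{-\beta}\;\cong\;\hat{K}^A_\bullet(S_A;E_A+\beta-k\varepsilon_A),
\]
and applying $i^\dagger_{\orbit(F)}$ together with \Cref{th:exc-restr} immediately gives $\mathrm{E}_F(-\beta')=\mathrm{E}_F(-\beta+k\varepsilon_A)$. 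In other words, $\mathrm{E}_F(-\gamma)$ depends only on $\gamma\bmod\ZZ^d$ once $-\gamma\notin\sRes(A)$, so any dual-mixed-Gauss--Manin witness may be replaced by the canonical shift $\beta-k\varepsilon_A$ without loss. With this in place, the hypothesis of \Cref{lem:mult-by-alpha} is verified and your argument goes through.
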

\begin{proof} (\ref{th:plus-dag-some}$\implies$\ref{th:plus-dag-cond}) Let $F\preceq A$ be a face. If $\orbit(F)$ is not contained in (hence is disjoint from) $U$, then by \Cref{lem:empty-cap}, the restriction to $\orbit(F)$ of $\varpi_{U+}\iota_{U\dagger}\mathcal{O}_{T_A}^\beta$ vanishes. Therefore, by the hypothesis, the same applies to the restriction to $\orbit(F)$ of $\hat{K}_\bullet^A(S_A;E_A-\beta)$. Hence, \eqref{eq:orbit-restr-cpt} from \Cref{th:orbit-restr} implies that $\RGamma_{\orbit(F)}(S_A)_{\beta+\CC{F}}=0$. In particular, $\beta\notin \mathcal{E}_{A,F}^{\mathrm{strong}}$ and $\mathrm{E}^*_F(\beta)=\emptyset$.
	
	Next, suppose $\orbit(F)\subseteq U$. By \Cref{prop:phi-dagger}, there exists a $\beta'$, which may be chosen such that $-\beta'$ is not strongly resonant (cf.~\cite[the discussion preceding Cor.~3.9]{SW09}), with $\beta-\beta'\in \NN{A}$ and such that $\varphi_\dagger\mathcal{O}_{T_A}^\beta$ is isomorphic to $\hat{K}^A_\bullet(S_A; E_A-\beta')$. We fix such a $\beta'$. By \Cref{th:orbit-restr},
	\begin{equation}\label{eq:SA-restr}
		i_{\orbit(F)}^+\hat{K}_\bullet^A(S_A; E_A-\beta) \cong \bigoplus_{\lambda+\ZZ{F}\in \CC{F}/\ZZ{F}} \mathcal{O}_{T_F}^\lambda \otimes_\CC \RGamma_{\orbit(F)}(S_A)_{\beta-\lambda} \otimes_\CC \Bigwedge\CC^{d_{A/F}},
	\end{equation}
	By \Cref{th:orbit-restr} together with \Cref{lem:dag-plus},
	\begin{align}
		i_{\orbit(F)}^+\varpi_{U+}\iota_{U\dagger}\mathcal{O}_{T_A}^\beta
		&\cong i_{\orbit(F)}^+\RGamma_U(\varphi_\dagger\mathcal{O}_{T_A}^\beta)\notag\\
		&\cong i_{\orbit(F)}^+\varphi_\dagger\mathcal{O}_{T_A}^\beta\notag\\
		&\cong i_{\orbit(F)}^+\hat{K}^A_\bullet(S_A; E_A-\beta')\notag\\
		&\cong \bigoplus_{\lambda+\ZZ{F}\in \CC{F}/\ZZ{F}} \mathcal{O}_{T_F}^\lambda \otimes_\CC \RGamma_{\orbit(F)}(\omega_{S_A}^\bullet)_{\beta'-\lambda} \otimes_\CC \Bigwedge\CC^{d_{A/F}}.\label{eq:dagger-restr}
	\end{align}
	The left hand sides of \eqref{eq:SA-restr} and \eqref{eq:dagger-restr} are quasi-isomorphic by hypothesis. Hence, the same is true of the right hand sides of \eqref{eq:SA-restr} and \eqref{eq:dagger-restr}---call this isomorphism $\psi$. Now, the modules $\mathcal{O}_{T_A}^\lambda$ are simple of different weights, and the differentials of $\Bigwedge\CC^{d_{A/F}}$ are all $0$. Therefore, $\psi$ induces a quasi-isomorphism between $\RGamma_{\orbit(F)}(S_A)_{\beta-\lambda}$ and $\RGamma_{\orbit(F)}(\omega_{S_A}^\bullet)_{\beta'-\lambda}$ for all $\lambda\in \CC{F}$. But by \eqref{eq:face-LC-om}, we know that $\RGamma_{\orbit(F)}(\omega_{S_A}^\bullet)\cong \CC\{\NN{F}-\NN{A}\}[-d_{A/F}]$. Hence, $\RGamma_{\orbit(F)}(S_A)_{\beta-\lambda}$ can have cohomology only in cohomological degree $d_{A/F}$ and is nonzero if and only if $\lambda+\ZZ{F}\in -\mathrm{E}_F(-\beta')$. Thus, $\beta$ is not strongly $(A,F)$-resonant, and $\mathrm{E}^*_F(\beta) = -\mathrm{E}_F(-\beta')$. Now use \Cref{prop:EA} and \Cref{def:mgm}.
	
	(\ref{th:plus-dag-cond}$\implies$\ref{th:plus-dag-any}) Let $\beta'$ be as above. Consider the morphism
	\[\eta=\cdot\partial^{\beta-\beta'}\colon \hat{K}^A_\bullet(S_A;E_A-\beta') \to \hat{K}^A_\bullet(S_A; E_A-\beta).\]
	Let $U$ be an open subset of $\widehat{\CC^n}$ with $U\cap X_A=\fSupp\hat{K}^A_\bullet(S_A;E_A-\beta)$; such a $U$ exists by \Cref{lem:open-fsupp}. Then $\RGamma_{\widehat{\CC^n}\setminus U} \hat{K}^A_\bullet(S_A; E_A-\beta)$ vanishes by \Cref{prop:lc-vs-fSupp}. So, from the distinguished triangle relating $\RGamma_{U}$ and $\RGamma_{\widehat{\CC^n}\setminus U}$, we get that $\RGamma_{U}\hat{K}^A_\bullet(S_A; E_A-\beta)$ is isomorphic to $\hat{K}^A_\bullet(S_A; E_A-\beta)$. Thus, it remains to show that $\RGamma_U(\eta)$ is an isomorphism.
	
	Now, $\RGamma_U(\eta)$ is an isomorphism if and only if its cone vanishes, and cones commute with $\RGamma_U$, so we need to show that $\RGamma_U(\cone\eta)=0$. By \Cref{prop:lc-vs-fSupp}, this is true if and only if the fiber support of $\cone\eta$ is disjoint from $U$. So, we just need to show that $i^+_{\orbit(F)}\cone\eta=0$ for all $\orbit(F)\subseteq U$. Pulling out the cone, we just need to show that $\cone (i^+_{\orbit(F)}\eta)=0$ for all $\orbit(F)\subseteq U$, i.e.~that $i^+_{\orbit(F)}\eta$ is an isomorphism for all $\orbit(F)\subseteq U$. This is true by \Cref{lem:mult-by-alpha}.
	
	(\ref{th:plus-dag-any}$\implies$\ref{th:plus-dag-some}) Immediate.
\end{proof}

\begin{remark}\label{rmk:E+vsEdag}
	Let $\beta\in \CC^d$ with $\varphi_\dagger\mathcal{O}_{T_A}\cong \hat{K}^A_\bullet(S_A; E_A-\beta)$. Then the proof of (a$\implies$b) in \Cref{th:plus-dag} shows that $\beta\notin \mathcal{E}_A$, and $\mathrm{E}^*_F(\beta)=-\mathrm{E}_F(-\beta)$ for all $F\preceq A$.
\end{remark}

\begin{theorem}\label{th:dag-plus}
	The following are equivalent for $\beta\in\CC^d$:
	\begin{enumerate}[label=\textnormal{(\alph*)}]
		\item\label{th:dag-plus-cond} $\beta$ is mixed Gauss--Manin.
		\item\label{th:dag-plus-some} $K^A_\bullet(S_A; E_A-\beta) \qi \FL(\varpi_{U\dagger}\iota_{U+}\mathcal{O}_{T_A}^\beta)$ for some open subset $U\subseteq \widehat{\CC^n}$ containing $T_A$.
		\item\label{th:dag-plus-any} $K^A_\bullet(S_A; E_A-\beta) \qi \FL(\varpi_{U\dagger}\iota_{U+}\mathcal{O}_{T_A}^\beta)$ for any open subset $U\subseteq \widehat{\CC^n}$ satisfying $U\cap X_A = \cofSupp\hat{K}^A_\bullet(S_A; E_A-\beta)$.
	\end{enumerate}
\end{theorem}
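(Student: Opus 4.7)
The plan is to mirror the proof of \Cref{th:plus-dag}, exchanging $(\iota_{U\dagger},\varpi_{U+})$ for $(\iota_{U+},\varpi_{U\dagger})$, fiber support for cofiber support, the exceptional direct image $\varphi_\dagger$ of \Cref{prop:phi-dagger} for the direct image $\varphi_+$ of \cite[Cor.~3.7]{SW09}, and the triple \Cref{th:orbit-restr}/$i^+_{\orbit(F)}$/$\mathrm{E}^*_F$ for the dual triple \Cref{th:exc-restr}/$i^\dagger_{\orbit(F)}$/$\mathrm{E}_F$. The implication \ref{th:dag-plus-any}$\Rightarrow$\ref{th:dag-plus-some} follows once we check that a $U$ as in \ref{th:dag-plus-any} exists and contains $T_A$, which is done during the proof of \ref{th:dag-plus-cond}$\Rightarrow$\ref{th:dag-plus-any} below.

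For \ref{th:dag-plus-some}$\Rightarrow$\ref{th:dag-plus-cond}, fix $F\preceq A$ and apply $i^\dagger_{\orbit(F)}$ to the hypothesized isomorphism. If $\orbit(F)\cap U=\emptyset$, \Cref{lem:empty-cap} kills $i^\dagger_{\orbit(F)}\varpi_{U\dagger}\iota_{U+}\mathcal{O}_{T_A}^\beta$, so \Cref{th:exc-restr} forces $\mathrm{E}_F(\beta)=\emptyset$. Otherwise $\orbit(F)\subseteq U$; the open-immersion identity $\varpi_U^+\varpi_{U\dagger}\cong\mathrm{id}$ together with \Cref{lem:dag-plus} simplifies $i^\dagger_{\orbit(F)}\varpi_{U\dagger}\iota_{U+}\mathcal{O}_{T_A}^\beta$ to $i^\dagger_{\orbit(F)}\varphi_+\mathcal{O}_{T_A}^\beta$. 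Pick $\beta'=\beta-k\varepsilon_A$ for $k\gg 0$ so that $\beta'\notin\sRes(A)$; since $\mathcal{O}_{T_A}^\beta\cong\mathcal{O}_{T_A}^{\beta'}$, Schulze--Walther identifies the previous complex with $i^\dagger_{\orbit(F)}\hat{K}^A_\bullet(S_A;E_A-\beta')$. Matching the two decompositions supplied by \Cref{th:exc-restr}, and using that the $\mathcal{O}_{T_F}^\lambda$ for distinct classes $\lambda+\ZZ F\in\CC F/\ZZ F$ are non-isomorphic simples, yields $\mathrm{E}_F(\beta)=\mathrm{E}_F(\beta')$; so $\beta$ is mixed Gauss--Manin along $F$.

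For \ref{th:dag-plus-cond}$\Rightarrow$\ref{th:dag-plus-any}, a key preliminary observation is that the argument just given, applied directly to the Schulze--Walther isomorphism $\varphi_+\mathcal{O}_{T_A}^\beta\cong\hat{K}^A_\bullet(S_A;E_A-\beta')$, shows $\mathrm{E}_F(\beta')$ depends only on $\beta'\pmod{\ZZ^d}$ within the non-resonant locus. So the mixed Gauss--Manin hypothesis implies that the single choice $\beta'=\beta-k\varepsilon_A$ ($k\gg 0$) satisfies $\mathrm{E}_F(\beta')=\mathrm{E}_F(\beta)$ for every $F$ with $\mathrm{E}_F(\beta)\neq\emptyset$; for the remaining $F$, \Cref{lem:stability}(b) combined with $\beta-\beta'\in\NN A$ forces $\mathrm{E}_F(\beta')\subseteq\mathrm{E}_F(\beta)=\emptyset$. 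Existence of $U$ follows from \Cref{rmk:supps}: $\cofSupp\hat{K}^A_\bullet(S_A;E_A-\beta)=\bigsqcup_{\mathrm{E}_F(\beta)\neq\emptyset}\orbit(F)$, and this set of faces is upward-closed (from $\NN A-\NN F\subseteq\NN A-\NN G$ for $F\preceq G$), so the cofiber support is open in $X_A$ and contains $T_A=\orbit(A)$. Now define $\eta\colon\hat{K}^A_\bullet(S_A;E_A-\beta')\to\hat{K}^A_\bullet(S_A;E_A-\beta)$ by right-multiplication by $\partial^{k\varepsilon_A}$, and form the composite
\[\phi\colon\varpi_{U\dagger}\varpi_U^+\hat{K}^A_\bullet(S_A;E_A-\beta')\xrightarrow{\text{counit}}\hat{K}^A_\bullet(S_A;E_A-\beta')\xrightarrow{\eta}\hat{K}^A_\bullet(S_A;E_A-\beta);\]
by \Cref{lem:dag-plus}, the domain of $\phi$ is $\varpi_{U\dagger}\iota_{U+}\mathcal{O}_{T_A}^\beta$.

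The main obstacle is showing $\phi$ is a quasi-isomorphism. Both ends of $\phi$ are supported in $X_A$, so it suffices by \Cref{cor:fSupp} (applied to $\cone\phi$ and its holonomic dual) to check $i^\dagger_{\orbit(F)}\cone\phi=0$ for every face $F$. When $\orbit(F)\cap U=\emptyset$, both sides of $\phi$ vanish under $i^\dagger_{\orbit(F)}$ (the left by \Cref{lem:empty-cap}, the right by $\mathrm{E}_F(\beta)=\emptyset$ and \Cref{th:exc-restr}). When $\orbit(F)\subseteq U$, the counit becomes an isomorphism after $i^\dagger_{\orbit(F)}$ (again by $\varpi_U^+\varpi_{U\dagger}\cong\mathrm{id}$), and $i^\dagger_{\orbit(F)}\eta$ is an isomorphism by \Cref{lem:om-mult-by-alpha}, whose hypothesis is exactly the uniform equality $\mathrm{E}_F(\beta')=\mathrm{E}_F(\beta)$ arranged above.
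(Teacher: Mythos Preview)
Your proof is correct and follows essentially the same strategy as the paper's. The only notable difference is in (\ref{th:dag-plus-cond}$\Rightarrow$\ref{th:dag-plus-any}): the paper reduces to the argument of \Cref{th:plus-dag} by passing to holonomic duals (so that cofiber support becomes fiber support and $\varpi_{U\dagger}\varpi_U^+$ becomes $\RGamma_U$), whereas you argue directly via $i^\dagger_{\orbit(F)}$ and the counit $\varpi_{U\dagger}\varpi_U^+\to\mathrm{id}$; these two routes are formally dual to one another. Your explicit justification that a \emph{single} non-resonant $\beta'$ can serve simultaneously for all faces---via the observation that $\mathrm{E}_F(\beta')$ is constant on non-resonant $\beta'$ within a fixed $\ZZ^d$-coset, together with \Cref{lem:stability}\ref{lem:stability.EF} for the faces with $\mathrm{E}_F(\beta)=\emptyset$---is a genuine and useful elaboration that the paper leaves implicit.
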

\begin{proof}(\ref{th:dag-plus-some}$\implies$\ref{th:dag-plus-cond}) Let $F\preceq A$ be a face. If $\orbit(F)$ is not contained in (hence disjoint from) $U$, then $i_{\orbit(F)}^\dagger\varpi_{U\dagger}\iota_U^+\mathcal{O}_{T_A}^\beta = 0$ by \Cref{lem:empty-cap}. So, $\mathrm{E}_F(\beta)=\emptyset$ by the hypothesis and \Cref{th:exc-restr}.
	
	Next, suppose $\orbit(F)$ is contained in $U$. Choose a $\beta'\in \beta + \ZZ^d$ which is not strongly resonant. Then by \Cref{th:exc-restr},
	\begin{equation}\label{eq:exc-SA-restr}
		i^\dagger_{\orbit(F)}\hat{K}^A_\bullet(S_A; E_A-\beta) \cong \bigoplus_{\lambda+\ZZ{F}\in \CC{F}/\ZZ{F}} \mathcal{O}_{T_F}^\lambda \otimes_\CC S_A[\partial^{-F}]_{\beta-\lambda}\otimes_\CC \Bigwedge\CC^{d_{A/F}},
	\end{equation}
	and by \Cref{th:exc-restr} together with \cite[Cor.~3.7]{SW09} and \Cref{lem:dag-plus},
	\begin{align}
		i^\dagger_{\orbit(F)}\varpi_{U\dagger}\iota_{U+}\mathcal{O}_{T_A}^\beta
		&\cong i^\dagger_{\orbit(F)} \varpi_{U\dagger}\varpi_U^{-1}\varphi_+\mathcal{O}_{T_A}^\beta\notag\\
		&\cong i^\dagger_{\orbit(F)} \varphi_+\mathcal{O}_{T_A}^\beta\notag\\
		&\cong i^\dagger_{\orbit(F)} \hat{K}^A_\bullet(S_A; E_A-\beta')\notag\\
		&\cong \bigoplus_{\lambda+\ZZ{F}\in \CC{F}/\ZZ{F}} \mathcal{O}_{T_F}^\lambda \otimes_\CC S_A[\partial^{-F}]_{\beta'-\lambda}\otimes_\CC \Bigwedge\CC^{d_{A/F}}.\label{eq:exc-dagger-restr}
	\end{align}
	The left hand sides of \eqref{eq:exc-SA-restr} and \eqref{eq:exc-dagger-restr} are isomorphic by hypothesis. Hence, the same is true of the right hand sides---call this isomorphism $\psi$. As in the proof of \Cref{th:plus-dag}, the modules $\mathcal{O}_{T_A}^\lambda$ are simple of different weights, and the differentials of $\Bigwedge\CC^{d_{A/F}}$ are all $0$. Therefore, $\psi$ induces an isomorphism between $S_A[\partial^{-F}]_{\beta-\lambda}$ and $S_A[\partial^{-F}]_{\beta'-\lambda}$. Now use the definition of $\mathrm{E}_F$.
	
	(\ref{th:dag-plus-cond}$\implies$\ref{th:dag-plus-any}) Let $\beta'$ be as above. Consider the morphism
	\[ \eta = \cdot \partial^{\beta'-\beta} \colon \hat{K}^A_\bullet(S_A; E_A-\beta') \to \hat{K}_\bullet^A(S_A; E_A-\beta).\]
	Let $U$ be a Zariski open subset of $\widehat{\CC^n}$ with $U\cap X_A = \cofSupp \hat{K}^A_\bullet(S_A; E_A-\beta)$; such a $U$ exists by \cite[Prop.~2.2 (4)]{Sai01} and the orbit-cone correspondence. Now use the same argument as in the proof of \Cref{th:plus-dag} with $\Ddual \hat{K}^A_\bullet(S_A; E_A-\beta)$, $\Ddual \eta$, and \Cref{lem:om-mult-by-alpha} in place of $\hat{K}^A_\bullet(S_A; E_A-\beta)$, $\eta$, and \Cref{lem:mult-by-alpha}, respectively.
	
	(\ref{th:dag-plus-any}$\implies$\ref{th:dag-plus-some}) Immediate.
\end{proof}

The following example shows that in general, not every $\beta$ is mixed or dual mixed Gauss--Manin even if $S_A$ is Cohen--Macaulay.

\begin{example}\label{ex:8isoms}
	Let
	\[ A = \begin{bmatrix}
		1 &1 &0\\
		0 &1 &2
	\end{bmatrix}.\]
	The associated semigroup ring $S_A$ is Cohen--Macaulay but not normal. For simplicity, we only discuss $\beta\in \ZZ^2$. There are 8 isomorphism classes---these are pictured in \Cref{fig:8isoms}.
	\begin{figure}
		\centering
		\includegraphics{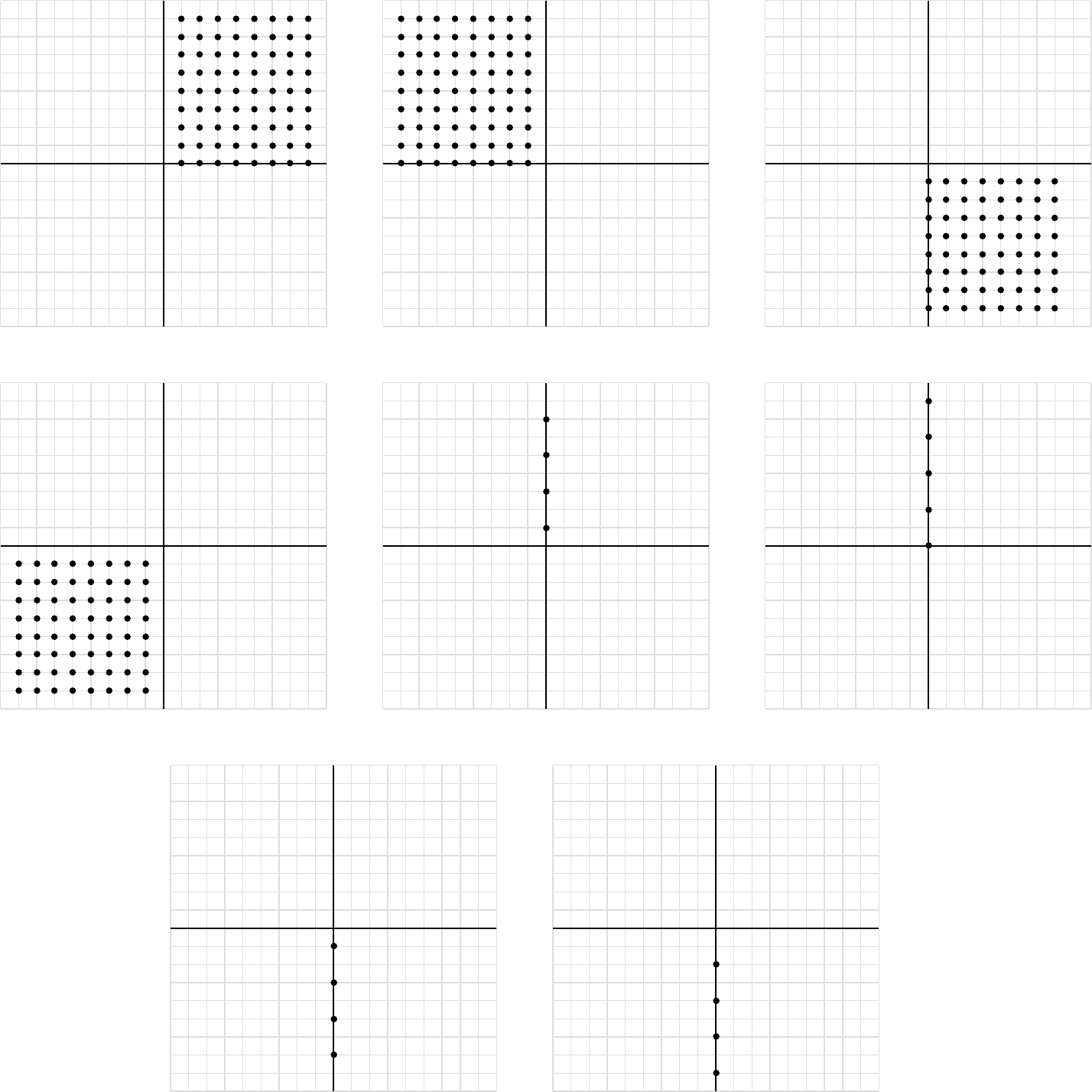}
		\caption{The eight integral isomorphism classes from \Cref{ex:8isoms}.}
		\label{fig:8isoms}
	\end{figure}
	Of these, only the first four (numbered from left to right then top to bottom) are mixed Gauss--Manin, and only these first four are dual mixed Gauss--Manin. The fiber supports of the 8 classes are, in order,
	\begin{gather*}
		\orbit(A),\quad \orbit(A)\cup\orbit([\vec{a}_3]),\quad \orbit(A)\cup\orbit([\vec{a}_1]),\quad X_A,\\
		\orbit(A)\cup \orbit([\vec{a}_3]), \quad \orbit(A)\cup \orbit([\vec{a}_3]), \quad X_A, \quad X_A.
	\end{gather*}
	The cofiber supports of the 8 classes are, in order,
	\begin{gather*}
		X_A,\quad \orbit(A)\cup\orbit([\vec{a}_1]),\quad  \orbit(A)\cup \orbit([\vec{a}_3]), \quad \orbit(A)\\
		\orbit(A)\cup\orbit([\vec{a}_1])\cup\orbit([\vec{a}_3]), \quad \orbit(A)\cup\orbit([\vec{a}_1])\cup\orbit([\vec{a}_3])\\
		\orbit(A)\cup \orbit([\vec{a}_3]), \quad \orbit(A)\cup \orbit([\vec{a}_3]).
	\end{gather*}
	The fiber supports were computed using Macaulay2 (\cite{M2}) by restricting the various modules $\hat{M}_A(\beta)$ to the various distinguished points $\mathbbm{1}_F$ and asking whether or not the result vanished. To compute the cofiber supports, we implemented \cite[Algorithms 3.4.2 and 3.4.3]{ST01} in Macaulay2.
\end{example}

%

\section{Normal Case}
In this section, we prove (\Cref{th:normal}) that if $S_A$ is normal, then every parameter $\beta$ is both mixed Gauss--Manin and dual mixed Gauss--Manin. \Cref{lem:normal} provides an explicit description of the fiber and cofiber supports of $\hat{M}_A(\beta)$ and computes the restrictions of $\hat{M}_A(\beta)$ to the various orbits. In a future paper, we will apply \Cref{th:normal} to compute for such $A$ the projection and restriction of $M_A(\beta)$ to coordinate subspaces of the form $\CC^F$, where $F$ is a face of $A$; and, if $A$ is in addition homogeneous, to show that the holonomic dual of $M_A(\beta)$ is itself $A$-hypergeometric.

\medskip
Recall that for a facet $G\preceq \NN{A}$, there is a unique linear form $h_G\colon \ZZ^d\to \ZZ$, called the \emph{primitive integral support function} of $G$, satisfying the following conditions:
\begin{enumerate}
	\item $h_G(\ZZ^d)=\ZZ$.
	\item $h_G(\vec{a}_i)\geq0$ for all $i$.
	\item $h_G(\vec{a}_i)=0$ for all $\vec{a}_i\in G$.
\end{enumerate}

\begin{lemma}\label{lem:normal}
	Assume $S_A$ is normal. Let $\beta\in \CC^d$ and $F\preceq A$.
	\begin{enumerate}[label=\textnormal{(\alph*)}]
		\item\label{lem:normal.restr} $i^+_{\orbit(F)}\hat{M}_A(\beta)$ is either zero or isomorphic to $\mathcal{O}_{T_F}^\lambda \otimes_\CC\Bigwedge\CC^{d_{A/F}}[-d_{A/F}]$ for some (equiv.\ any) $\lambda\in \CC{F}$ with $\beta-\lambda\in \ZZ^d$.
		\item\label{lem:normal.exc-restr} $i^\dagger_{\orbit(F)}\hat{M}_A(\beta)$ is either zero or isomorphic to $\mathcal{O}_{T_F}^\lambda \otimes_\CC\Bigwedge\CC^{d_{A/F}}$ for some (equiv.\ any) $\lambda\in \CC{F}$ with $\beta-\lambda\in \ZZ^d$.
		\item\label{lem:normal.orbit-fsupp} $\orbit(F)\subseteq \fSupp\hat{M}_A(\beta)$ if and only if $(\beta+\CC{F})\cap \ZZ^d\neq\emptyset$ and $h_G(\beta)\in\ZZ_{<0}$ for every facet $G\succeq F$.
		\item\label{lem:normal.orbit-cofsupp} $\orbit(F)\subseteq \cofSupp\hat{M}_A(\beta)$ if and only if $(\beta+\CC{F})\cap \ZZ^d\neq\emptyset$ and $h_G(\beta)\in \NN$ for every facet $G\succeq F$.
	\end{enumerate}
\end{lemma}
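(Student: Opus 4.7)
My plan is to combine the restriction formulas \Cref{th:orbit-restr} and \Cref{th:exc-restr} with the standard combinatorics of normal affine semigroup rings and their local cohomology.

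First, I would extract two lattice-theoretic consequences of normality. (i) For every face $F\preceq A$, normality of $\NN A$ gives
\[\NN A+\ZZ F=\{x\in \ZZ^d\mid h_G(x)\geq 0\text{ for every facet }G\succeq F\};\]
in particular $S_A[\partial^{-F}]=\CC[\NN A+\ZZ F]$ is a normal affine semigroup ring of Krull dimension $d$, and the graded ideal $\mathfrak{m}_F=\braket{\partial_i\mid \vec{a}_i\notin F}\cdot S_A[\partial^{-F}]$ has codimension $d_{A/F}$. (ii) $\ZZ F=\RR F\cap \ZZ^d$: given $x\in \RR F\cap \ZZ^d$ and a point $c\in \NN F$ in the relative interior of $\RR_{\geq 0}F$, for $N\gg 0$ the element $Nc+x$ lies in $\RR_{\geq 0}F\cap \ZZ^d\subseteq \RR_{\geq 0}A\cap \ZZ^d=\NN A$ (by normality) and in $\RR F$, so by the face property of $\NN F$ inside $\NN A$ one has $Nc+x\in \NN F\subseteq \ZZ F$, whence $x\in \ZZ F$.

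Second, Hochster's theorem gives Cohen--Macaulayness of $S_A[\partial^{-F}]$, so $H^i_{\mathfrak{m}_F}(S_A[\partial^{-F}])$ vanishes for $i<d_{A/F}$, and graded local duality combined with the Danilov--Stanley description of the canonical module yields, via \Cref{ex:orbitLC},
\[H^{d_{A/F}}_{\orbit(F)}(S_A)_\gamma\cong\begin{cases}\CC & \text{if }\gamma\in \ZZ^d\text{ and }h_G(\gamma)<0\text{ for every facet }G\succeq F,\\ 0 & \text{otherwise.}\end{cases}\]

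Third, I would feed these inputs into the two restriction theorems. For \ref{lem:normal.restr}, the vanishing of lower local cohomology collapses $\RGamma_{\orbit(F)}(S_A)$ to $H^{d_{A/F}}_{\orbit(F)}(S_A)[-d_{A/F}]$; non-vanishing of the $(\beta-\lambda)$-graded piece forces $\beta-\lambda\in \ZZ^d$, and by (ii) at most one coset $\lambda+\ZZ F\in\CC F/\ZZ F$ achieves this, so the sum in \Cref{th:orbit-restr} collapses to a single summand $\mathcal{O}_{T_F}^\lambda\otimes_\CC \Bigwedge\CC^{d_{A/F}}[-d_{A/F}]$. Because $h_G$ kills $\CC F$ when $F\preceq G$, the remaining condition $h_G(\beta-\lambda)\in\ZZ_{<0}$ is just $h_G(\beta)\in \ZZ_{<0}$, which is exactly \ref{lem:normal.orbit-fsupp} once one notes that a non-zero integrable connection on $\orbit(F)$ has fiber support equal to all of $\orbit(F)$. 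Parts \ref{lem:normal.exc-restr} and \ref{lem:normal.orbit-cofsupp} follow in parallel from \Cref{th:exc-restr}: the non-vanishing of $S_A[\partial^{-F}]_{\beta-\lambda}$ translates via the cone description in (i) to $\beta-\lambda\in \ZZ^d$ together with $h_G(\beta)\in \NN$ for every facet $G\succeq F$, and the same saturation argument collapses the direct sum to a single summand $\mathcal{O}_{T_F}^\lambda\otimes_\CC \Bigwedge\CC^{d_{A/F}}$.

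The main obstacle I anticipate is the saturation statement (ii), which is exactly what guarantees the uniqueness of the coset $\lambda+\ZZ F$ and hence the clean single-summand answers in \ref{lem:normal.restr} and \ref{lem:normal.exc-restr}; once saturation and the Danilov--Stanley description are in hand, the rest is routine bookkeeping through the restriction formulas and the orbit--cone correspondence.
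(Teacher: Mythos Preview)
Your proposal is correct and follows essentially the same route as the paper: use Cohen--Macaulayness to collapse $\RGamma_{\orbit(F)}(S_A)$ to top degree, use saturation $\CC F\cap\ZZ^d=\ZZ F$ to reduce the direct sums in \Cref{th:orbit-restr} and \Cref{th:exc-restr} to a single term, and read off the facet conditions from the explicit description of the top local cohomology (respectively of $\NN A+\ZZ F$). The one step you leave implicit and should state is that normality (hence Cohen--Macaulayness) of $S_A$ forces $\hat{K}^A_\bullet(S_A;E_A-\beta)\qi \hat{M}_A(\beta)$ via \cite[Th.~6.6]{MMW05}, which is what lets you apply the restriction theorems to $\hat{M}_A(\beta)$ in the first place; the paper records this at the outset, and for \ref{lem:normal.orbit-cofsupp} simply cites \cite[Th.~5.2]{Sai01} rather than rederiving the facet description you give in (i).
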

\begin{proof}
	Before proving the statements, notice that because $S_A$ is normal, it is Cohen--Macaulay by \cite[Theorem~1]{hoch72}. Therefore, $\hat{M}_A(\beta) \qi \hat{K}_\bullet^A(S_A; E_A-\beta)$ by \cite[Th.~6.6]{MMW05}.
	
	\ref{lem:normal.restr} Since $S_A$ is Cohen--Macaulay, the complex $\RGamma_{\orbit(F)}(S_A)$ has cohomology only in cohomological degree $d_{A/F}$, so that
	\begin{equation}\label{eq:EF*normal}
		\mathrm{E}^*_F(\beta) = \Set{\lambda + \ZZ{F}\in \CC{F}/\ZZ{F}|  \HH^{d_{A/F}}_{\orbit(F)}(S_A)_{\beta-\lambda} \neq 0}.
	\end{equation}
	Suppose $\lambda+\ZZ{F},\lambda'+\ZZ{F}\in \mathrm{E}^*_F(\beta)$.  Then $\lambda$ and $\lambda'$ differ by an element $\CC{F}\cap \ZZ^d$. But by normality, $\CC{F}\cap \ZZ^d=\ZZ{F}$. Hence, $\lambda+\ZZ{F}=\lambda'+\ZZ{F}$. Now apply \Cref{th:orbit-restr}, and use \eqref{eq:EF*normal} along with the fact that the Hilbert function of $\HH^{d_{A/F}}_{\orbit(F)}(S_A)$ takes values in $\{0,1\}$.
	
	\ref{lem:normal.exc-restr} As in \ref{lem:normal.restr}, normality implies that $\mathrm{E}_F(\beta)$ has at most one element (this also follows from \cite[Prop.~2.3 (1)]{Sai01}). Now apply \Cref{th:exc-restr} along with the fact that the Hilbert function of $S_A[\partial^{-F}]$ takes values in $\{0,1\}$.
	
	\ref{lem:normal.orbit-fsupp} 
	
	By \Cref{th:orbit-restr}, we need to show that $\RGamma_{\orbit(F)}(S_A)_{\beta+\CC{F}}\neq 0$ if and only if $h_G(\beta)\in \ZZ_{<0}$ for all facets $G\succeq F$. As in \ref{lem:normal.restr}, $\RGamma_{\orbit(F)}(S_A)$ is concentrated in cohomological degree $d_{A/F}$. Since $S_A$ is normal, $\HH^{d_{A/F}}_{\orbit(F)}(S_A)=\CC\{-\relint(\NN{A}-\NN{F})\}$, where $\relint$ denotes the relative interior of an affine semigroup (i.e.~the set of points in the affine semigroup which are not on any of its facets). In terms of the primitive integral support functions, $-\relint(\NN{A}-\NN{F})$ consists of those points $\alpha\in \ZZ^d$ such that $h_G(\alpha)<0$ for all facets of $A$ which contain $F$. Thus, $\RGamma_{\orbit(F)}(S_A)_{\beta+\CC{F}}\neq 0$ if and only if there exists a $\lambda\in \CC{F}$ with $\beta-\lambda\in \ZZ^d$ such that $h_G(\beta-\lambda)\in \ZZ_{<0}$ for all facets $G\succeq F$. But $h_G$ kills $\CC{F}$ by definition, and $\beta+\CC^d$ intersects $\ZZ^d$ by assumption. So, $\RGamma_{\orbit(F)}(S_A)_{\beta+\CC{F}}\neq 0$ if and only if $h_G(\beta)\in \ZZ_{<0}$ for all facets $G\succeq F$.
	
	\ref{lem:normal.orbit-cofsupp} The proof of \cite[Th.~5.2]{Sai01} shows that $\mathrm{E}_F(\beta)$ is non-empty if and only if $(\beta+\CC{F})\cap \ZZ^d\neq \emptyset$ and $h_G(\beta)\in \NN$ for every facet $G\succeq F$. Now use \Cref{th:exc-restr}.
\end{proof}

The condition $(\beta+\CC{F})\cap\ZZ^d\neq \emptyset$ in \Cref{lem:normal}\ref{lem:normal.orbit-fsupp} and \ref{lem:normal.orbit-cofsupp} is necessary, as the following example shows:

\begin{example}\label{ex:ZZd-plus-CCF}
	\begin{figure}
		\centering
		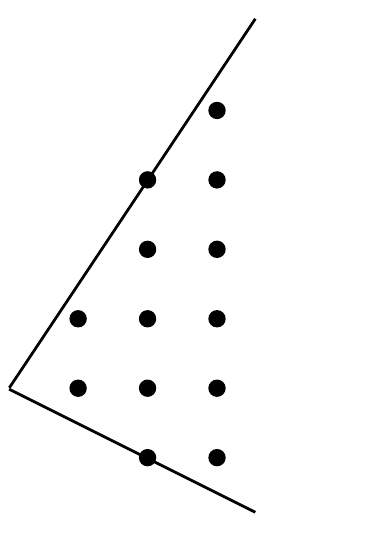
		\caption{The affine semigroup from \Cref{ex:ZZd-plus-CCF}.}\label{fig:ZZd-plus-CCF}
	\end{figure}
	Choose a matrix $A$ generating the affine semigroup pictured in \Cref{fig:ZZd-plus-CCF}. As in the figure, denote by $G_1$ and $G_2$, respectively, the facets $\begin{bmatrix}2&3\end{bmatrix}^\top$ and $\begin{bmatrix}2&-1\end{bmatrix}^\top$ of $A$. Then $h_{G_1}=\begin{bmatrix}3&-2\end{bmatrix}$ and $h_{G_2}=\begin{bmatrix}1&2\end{bmatrix}$.
	
	Consider the parameter $\beta=(-1,-1/2)$. Then $h_{G_1}(\beta)=-3-2(-1/2)=-2\in \ZZ_{<0}$ and $h_{G_2}(\beta)=-1+2(-1/2)=-2\in \ZZ_{<0}$, so by \Cref{lem:normal}\ref{lem:normal.orbit-fsupp}, the fiber support of $\hat{M}_A(\beta)$ contains both $\orbit(G_1)$ and $\orbit(G_2)$. But $\beta + \CC\emptyset = \beta\notin \ZZ^2$, so by \Cref{lem:normal}\ref{lem:normal.orbit-fsupp}, the fiber support of $\hat{M}_A(\beta)$ does not contain $\orbit(\emptyset)$. 
	
	To see the necessity of the condition for \Cref{lem:normal}\ref{lem:normal.orbit-cofsupp}, use a similar argument with the same $A$ for $\beta=(1,1/2)$.
\end{example}

\begin{theorem}\label{th:normal}
	Assume $S_A$ is normal. Let $\beta\in \CC^d$, let $U\subseteq \widehat{\CC^n}$ be an open subset with $U\cap X_A=\fSupp\hat{M}_A(\beta)$, and let $V\subseteq \widehat{\CC^n}$ be an open subset with $V\cap X_A=\cofSupp\hat{M}_A(\beta)$. Then
	\[ \FL(\varpi_{V\dagger}\iota_{V+}\mathcal{O}_{T_A}^\beta) \cong M_A(\beta)\cong \FL(\varpi_{U+}\iota_{U\dagger}\mathcal{O}_{T_A}^\beta).\]
\end{theorem}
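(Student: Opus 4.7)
The plan is to verify that when $S_A$ is normal, every $\beta \in \CC^d$ is both mixed Gauss--Manin and dual mixed Gauss--Manin, so that \Cref{th:plus-dag,th:dag-plus} apply directly. To identify $M_A(\beta)$ with the Euler--Koszul complex, I note that $S_A$ normal implies $S_A$ Cohen--Macaulay by Hochster's theorem~\cite{hoch72}, so by \cite[Theorem~6.6]{MMW05} we have $\hat{M}_A(\beta) \cong \hat{K}^A_\bullet(S_A; E_A - \beta)$ for every $\beta$. Every localization $S_A[\partial^{-F}]$ is again Cohen--Macaulay, so by \Cref{rmk:strong-CM} each $\mathcal{E}_{A,F}^{\mathrm{strong}}$ is empty, and \Cref{prop:EA} then gives $\mathcal{E}_A = \emptyset$, disposing of the first clause of the dual mixed Gauss--Manin condition.

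The key simplification afforded by normality, already implicit in the proof of \Cref{lem:normal}, is that $\CC F \cap \ZZ^d = \ZZ F$ for every face $F$, so both $\mathrm{E}_F(\beta)$ and $\mathrm{E}^*_F(\beta)$ contain at most one element. Consequently, any containment between two such sets is automatically an equality as soon as the smaller one is nonempty.

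For the mixed Gauss--Manin property, I set $\beta' = \beta + k\varepsilon_A$ for an integer $k$ large enough that $\beta' \notin \sRes(A)$; such $k$ exists by \cite[Corollaries~3.1 and~3.7]{SW09}. \Cref{lem:stability}\ref{lem:stability.EF} yields $\mathrm{E}_F(\beta) \subseteq \mathrm{E}_F(\beta')$ for every face $F$, and the singleton property converts this to the required equality (with the empty case handled trivially by the first branch of \Cref{def:mgm}). For the dual mixed Gauss--Manin property, I use \Cref{prop:phi-dagger} to choose $k \gg 0$ so that both $-\beta + k\varepsilon_A \notin \sRes(A)$ and
\[ \varphi_\dagger \mathcal{O}_{T_A}^\beta \cong \hat{K}^A_\bullet(S_A; E_A - (\beta - k\varepsilon_A)). \]
Setting $\beta' = \beta - k\varepsilon_A$, the isomorphism $\mathcal{O}_{T_A}^{\beta'} \cong \mathcal{O}_{T_A}^\beta$ places $\beta'$ in the situation of \Cref{rmk:E+vsEdag}, yielding $\mathrm{E}^*_F(\beta') = -\mathrm{E}_F(-\beta')$ for every $F$. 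Using $\mathcal{E}_{A,F}^{\mathrm{strong}} = \emptyset$, \Cref{lem:stability}\ref{lem:stability.EF*} gives $\mathrm{E}^*_F(\beta) \subseteq \mathrm{E}^*_F(\beta')$, and the singleton property again upgrades this to $\mathrm{E}^*_F(\beta) = -\mathrm{E}_F(-\beta')$, exactly the equality required by \Cref{def:mgm}.

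The main delicate point is that a single choice of $k$ must simultaneously achieve the non-resonance condition and realize $\varphi_\dagger \mathcal{O}_{T_A}^\beta$ as an Euler--Koszul complex of $S_A$ itself; the uniform largeness statements in \cite[Corollaries~3.1 and~3.7]{SW09} and \Cref{prop:phi-dagger} ensure that one $k$ works for all faces at once, and normality's singleton property is what prevents the nonstrict containments supplied by \Cref{lem:stability} from actually being strict.
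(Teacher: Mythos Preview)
Your proof is correct and follows essentially the same route as the paper's: verify that every $\beta$ is mixed and dual mixed Gauss--Manin by choosing $\beta'$ with $\beta-\beta'\in\NN A$ (resp.\ $\beta'-\beta\in\NN A$), using \Cref{lem:stability} for the containment $\mathrm{E}^{(*)}_F(\beta)\subseteq\mathrm{E}^{(*)}_F(\beta')$, and invoking the singleton property of these sets under normality (\Cref{lem:normal}) to upgrade containment to equality. Your invocation of \Cref{rmk:E+vsEdag} to convert $\mathrm{E}^*_F(\beta')$ into $-\mathrm{E}_F(-\beta')$ is exactly the step the paper leaves implicit when it says ``proceed as for the first isomorphism''; you are simply being more explicit about where that identification comes from.
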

\begin{proof}
	As in \Cref{lem:normal}, $\hat{M}_A(\beta) \qi \hat{K}_\bullet^A(S_A; E_A-\beta)$. By \cite[Th.~6.6]{MMW05}, this implies that $\mathcal{E}_A=\emptyset$.

	To prove the first isomorphism, choose a $\beta'\in \CC^d\setminus \sRes(A)$ with $\beta'-\beta \in \NN{A}$ (this is always possible---see \cite[the discussion preceding Cor.~3.9]{SW09}). Let $F\preceq A$ be a face. By \Cref{lem:stability}\ref{lem:stability.EF}, we have $\mathrm{E}_F(\beta)\subseteq \mathrm{E}_F(\beta')$, and by \Cref{lem:normal}\ref{lem:normal.exc-restr}, both $\mathrm{E}_F(\beta)$ and $\mathrm{E}_F(\beta')$ consist of at most one element. Therefore, if $\mathrm{E}_F(\beta)$ is non-empty, then it equals $\mathrm{E}_F(\beta')$. Hence, $\beta$ is mixed Gauss--Manin along $F$. Thus, $\beta$ is mixed Gauss--Manin.
	
	We now prove the second isomorphism. As in the proof of (\ref{th:plus-dag-some}$\implies$\ref{th:plus-dag-cond}) in \Cref{th:plus-dag}, choose a $-\beta'\in \CC^d\setminus \sRes(A)$ with $\beta-\beta'\in \NN{A}$ such that $\varphi_\dagger\mathcal{O}_{T_A}^\beta$ is isomorphic to $\hat{M}_A(\beta')$. Now proceed as for the first isomorphism, using \Cref{lem:stability}\ref{lem:stability.EF*}, $\mathrm{E}^*_F$, and \Cref{lem:normal}\ref{lem:normal.restr} in place of \Cref{lem:stability}\ref{lem:stability.EF}, $\mathrm{E}_F$, and \Cref{lem:normal}\ref{lem:normal.exc-restr}, respectively.
\end{proof}

\begin{example}\label{ex:111-012}
	Let
	\[ A = \begin{bmatrix}
		1& 1& 1\\
		0& 1& 2
	\end{bmatrix}.\]
	\begin{figure}
		\centering
		\begin{tikzpicture}
			\node[anchor=south west,inner sep=0] (image) at (0,0) {\includegraphics{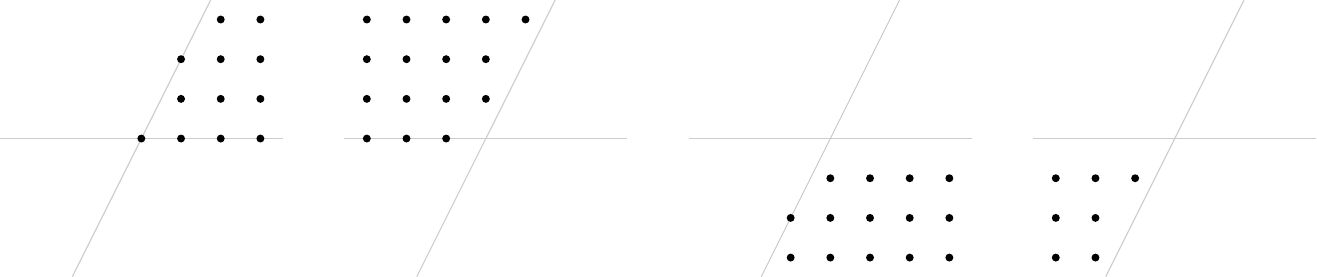}};
			\begin{scope}[x={(image.south east)},y={(image.north west)}]
				\node [anchor=center] at (.1,-.3) {\small $\begin{aligned}U&=(\CC^*)^3\\ V&=\widehat{\CC^3}\end{aligned}$};
				\node [anchor=center] at (.37,-.3) {\small $\begin{aligned}U&=\widehat{\CC^3}\setminus \{\partial_3\text{-axis}\}\\ V&=\widehat{\CC^3}\setminus \{\partial_1\text{-axis}\}\end{aligned}$};
				\node [anchor=center] at (.65,-.3) {\small $\begin{aligned}U&=\widehat{\CC^3}\setminus \{\partial_1\text{-axis}\}\\ V&=\widehat{\CC^3}\setminus \{\partial_3\text{-axis}\}\end{aligned}$};
				\node [anchor=center] at (.89,-.3) {\small $\begin{aligned}U&=\widehat{\CC^3}\\ V&=(\CC^*)^3\end{aligned}$};
			\end{scope}
		\end{tikzpicture}
		\caption{The four isomorphism classes from \Cref{ex:111-012}. The lines are the spans of the two facets of $\RR_{\geq 0}A$.}\label{fig:111-012}
	\end{figure}
	The associated seimgroup ring $S_A$ is a normal. For simplicity, we only discuss $\beta\in \ZZ^2$. There are four isomorphism classes of $A$-hypergeometric systems with $\beta\in\ZZ^2$; these are pictured in \Cref{fig:111-012} along with a $U$ and a $V$ as in \Cref{th:normal}. We now explain why these $U$ and $V$ work by computing the fiber and cofiber supports, using \Cref{lem:normal}, for each of the four isomorphism classes.
	
	The primitive integral support function corresponding to the facets $[\vec{a}_1]$ and $[\vec{a}_3]$ are $h_1(t_1,t_2)=t_2$ and $h_2(t_1,t_2)=2t_1-t_2$, respectively. If $M_A(\beta)$ is in the first (counted from left to right in \Cref{fig:111-012}) isomorphism class, then $h_1(\beta)$ and $h_2(\beta)$ are both in $\NN$, so $\fSupp{\hat{M}_A(\beta)}=\orbit(A)$ and $\cofSupp\hat{M}_A(\beta)=X_A$. If $M_A(\beta)$ is in the second class, then $h_1(\beta)\in\NN$ and $h_2(\beta)\in \ZZ_{<0}$, so $\fSupp{\hat{M}_A(\beta)}=\orbit(A)\cup\orbit([\vec{a}_3])$ and $\cofSupp\hat{M}_A(\beta)=\orbit(A)\cup\orbit([\vec{a}_1])$. If $M_A(\beta)$ is in the third class, then $h_1(\beta)\in\ZZ_{<0}$ and $h_2(\beta)\in \NN$, so $\fSupp{\hat{M}_A(\beta)}=\orbit(A)\cup\orbit([\vec{a}_1])$ and $\cofSupp\hat{M}_A(\beta)=\orbit(A)\cup\orbit([\vec{a}_3])$. If $M_A(\beta)$ is in the fourth class, then $h_1(\beta)$ and $h_2(\beta)$ are both in $\ZZ_{<0}$, so $\fSupp{\hat{M}_A(\beta)}=X_A$ and $\cofSupp\hat{M}_A(\beta)=\orbit(A)$.
\end{example}

\medskip

\bibliographystyle{amsalpha}
\bibliography{gkz}

\end{document}